\newtheorem{theorem}{Theorem}[section]
\newtheorem{lemma}[theorem]{Lemma}
\newtheorem{proposition}[theorem]{Proposition}
\newtheorem{corollary}[theorem]{Corollary}
\newtheorem{note}[theorem]{Note}
\theoremstyle{definition}
\newtheorem{definition}[theorem]{Definition}
\newtheorem{example}[theorem]{Example}
\theoremstyle{remark}
\newtheorem{remark}[theorem]{Remark}
\numberwithin{equation}{section}
\begin{document}

\title{Hausdorff dimension and infinitesimal similitudes on complete metric spaces}




\author{S. Verma}
\address{Department of Mathematics, IIT Delhi, New Delhi, India 110016 }
\email{saurabh331146@gmail.com}

\subjclass[2010]{Primary 28A80, 47B65; Secondary 28A78}


\keywords{Hausdorff dimension, Iterated function systems, positive operators, spectral radius}

\begin{abstract}

 In this paper, we answer a question of Nussbaum, Priyadarshi, and Lunel [Positive operators and Hausdorff dimension of invariant sets, Trans. Amer. Math. Soc. 364(2) (2012) 1029-1066.]. We also show that the Hausdorff dimension and box dimension of the attractor generated by a finite set of contractive infinitesimal similitudes are the same. Further, we extend many results of dimension theory to complete metric spaces. In the last part, we fill the gaps in the proofs of some articles, which are related to the dimension theory, and hint at some possible improvements in the recent papers.  
\end{abstract}

\maketitle


.

\section{INTRODUCTION}

Rooted in seminal work \cite{MW} of Mauldin and Williams on the graph-directed constructions, Nussbaum et al. \cite{Nussbaum1} proposed a generalized graph-directed systems, and studied the Hausdorff dimension of limit set for a finite family of contractive infinitesimal similitudes on a complete, perfect metric space. The concept of infinitesimal similitude introduced in \cite{Nussbaum1} generalizes not only the similitudes on general metric spaces but also the concept of conformal maps from Euclidean domain to general metric spaces. Therefore, the work in \cite{Nussbaum1} can be treated as a generalization of several works, see, for instance, \cite{Bandt,Lau,Lau2,MU1,MW,Peres,Schief2,Schief1}.

\par
Let $(X,d) $ be a compact and perfect metric space. Let $\mathcal{F}=\{X;f_1,f_2,\dots, f_N\}$ be an Iterated Function System (IFS) such that for $ 1 \le i \le N,$ $ f_i: X \rightarrow X$ is a contraction map with contraction coefficient $c_i.$ Then, by a result of Hutchinson \cite{JH}, there exists a unique, compact, non-empty set $A \subset X $, called attractor or limit set, with $$ A = \cup_{i=1}^{N}f_i(A).$$ More precisely, existence of the attractor $A$ is shown by a Hutchinson map $\mathcal{F}: \mathcal{H}(X) \to \mathcal{H}(X)$ defined by $\mathcal{F}(C)= \cup_{i=1}^{N} f_i(C),$ where $ \mathcal{H}(X) $ is a collection of nonempty compact subsets of $X$ equipped with Hausdorff metric induced by $d.$
Assume the map $f_i:X \rightarrow X$ is an infinitesimal similitude on $X$ and the map $x \to (Df_i)(x)$ is a strictly positive H\"older continuous function on $X$ for $1 \le i\le N.$ For $ \sigma \ge 0,$ define $L_{\sigma}: \mathcal{C}(X) \rightarrow \mathcal{C}(X)$ by $$ (L_{\sigma}g)(x)= \sum_{i=1}^{N}\big((Df_i)(x)\big)^{\sigma} g(f_i(x)).$$
By \cite[Theorem $5.4$]{Nussbaum4}, the operator $L_{\sigma}$ has a strictly positive eigenvector $u_{\sigma}$ with eigenvalue equal to the spectral radius $r(L_{\sigma})$ of $L_{\sigma}.$ 

Nussbaum, Priyadarshi and Lunel \cite{Nussbaum1} prove the following.
\begin{theorem}[\cite{Nussbaum1}, Theorem $1.2$]\label{thm-nussbaum}
Let $f_i: X \rightarrow X$ for $ 1 \le i \le N$ be infinitesimal similitudes and assume that the map $ x \to (Df_i)(x)$ is a strictly positive H\"older continuous function on $X.$ Assume that $f_i: X \rightarrow X $ is a contraction map with contraction coefficient $c_i$ and let $A$ denote the unique invariant set such that $$ A = \cup_{i=1}^{N}f_i(A).$$ Further, assume that $f_i,~ 1\le i \le N,$ satisfy $$ f_i(A) \cap f_j(A) = \emptyset~ \text{for} ~1 \le i,j \le N, i \ne j$$
and are one-to-one on $A.$ Then the Hausdorff dimension of $A$ is given by the unique $\sigma_0$ such that $r(L_{\sigma_0})=1.$
\end{theorem}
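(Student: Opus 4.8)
The plan is to squeeze $\dim_H A$ between $\sigma_0-\varepsilon$ and $\sigma_0+\varepsilon$ for every $\varepsilon>0$, using for each $n$ the natural cover of $A$ by the $N^n$ cylinders $f_w(A):=(f_{i_1}\circ\cdots\circ f_{i_n})(A)$, $w=i_1\cdots i_n$, together with the spectral data of $L_\sigma$. Everything rests on a word-uniform \emph{bounded distortion} estimate: with the chain-rule cocycle $(Df_w)(x):=\prod_{k=1}^{n}(Df_{i_k})\big((f_{i_{k+1}}\circ\cdots\circ f_{i_n})(x)\big)$, there is a constant $C\ge1$ \emph{independent of} $w$ such that
$$C^{-1}(Df_w)(y)\,d(x,x')\ \le\ d\big(f_w(x),f_w(x')\big)\ \le\ C\,(Df_w)(y)\,d(x,x')\qquad(x,x',y\in X).$$
I would obtain it by telescoping $d(f_w(x),f_w(x'))/d(x,x')$ into $n$ factors, the $k$-th being $(Df_{i_k})(\xi_k)$ times a multiplicative error; the infinitesimal similitude hypothesis forces these errors to $1$ near the diagonal, its combination with the H\"older continuity of $x\mapsto(Df_i)(x)$ (as in \cite{Nussbaum1}) gives a power rate, and the uniform contraction $c:=\max_ic_i<1$ makes the $k$-th pair exponentially close, so the product of errors is bounded by $\exp\!\big(\mathrm{const}\sum_{j\ge0}c^{j\alpha}\big)<\infty$, uniformly in $n,w,x,x'$. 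I expect this to be the main obstacle, since the similitude hypothesis is only a pointwise limit. It also gives $C^{-1}(Df_w)(y)\operatorname{diam}A\le\operatorname{diam}f_w(A)\le C(Df_w)(y)\operatorname{diam}A$ and $(Df_w)(x)\le C^2(Df_w)(y)$ for all $x,y$.

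Next I would pass to the operator. Iterating, $(L_\sigma^{n}g)(x)=\sum_{|w|=n}(Df_w)(x)^\sigma g(f_w(x))$; evaluated at the strictly positive eigenvector $u_\sigma$ from \cite[Theorem 5.4]{Nussbaum4} this reads $\sum_{|w|=n}(Df_w)(x)^\sigma u_\sigma(f_w(x))=r(L_\sigma)^n u_\sigma(x)$, and since $u_\sigma$ is bounded above and below by positive constants on the compact space $X$,
$$m_1\,r(L_\sigma)^n\ \le\ \sum_{|w|=n}(Df_w)(x)^\sigma\ \le\ m_2\,r(L_\sigma)^n\qquad(x\in X,\ n\ge1),$$
with $m_1,m_2>0$ independent of $n$ and $x$. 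Moreover $0<(Df_i)(x)\le c_i<1$ makes $\sigma\mapsto L_\sigma$ norm-continuous with $\|L_\sigma\|\le Nc^\sigma\to0$, $L_0\mathbf{1}=N$, and $L_{\sigma'}<L_\sigma$ for $\sigma'>\sigma$; as $L_\sigma$ has a strictly positive eigenfunction, $\sigma\mapsto r(L_\sigma)$ is then continuous and strictly decreasing with $r(L_0)=N\ge1$ and $r(L_\sigma)\to0$, so $r(L_{\sigma_0})=1$ has a unique solution $\sigma_0$.

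For the upper bound fix $\sigma>\sigma_0$, so $r(L_\sigma)<1$. The family $\{f_w(A):|w|=n\}$ covers $A$, $\max_{|w|=n}\operatorname{diam}f_w(A)\le Ca^n\operatorname{diam}A\to0$ where $a:=\max_i\sup_{x}(Df_i)(x)<1$, and
$$\sum_{|w|=n}\big(\operatorname{diam}f_w(A)\big)^\sigma\ \le\ C^\sigma(\operatorname{diam}A)^\sigma\sum_{|w|=n}(Df_w)(x)^\sigma\ \le\ C^\sigma(\operatorname{diam}A)^\sigma\,m_2\,r(L_\sigma)^n\ \longrightarrow\ 0.$$
Hence $H^\sigma(A)=0$ and $\dim_H A\le\sigma$; letting $\sigma\downarrow\sigma_0$ gives $\dim_H A\le\sigma_0$.

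For the lower bound the remaining hypotheses are essential. Disjointness of the compact sets $f_i(A)$ makes every $f_w(A)$ relatively clopen in $A$ (its complement there is a finite union of compact cylinders), hence a continuity set for any Borel measure, and yields $\delta:=\min_{i\ne j}d(f_i(A),f_j(A))>0$; injectivity makes each $f_w$ a homeomorphism of $A$ onto $f_w(A)$. Set $\sigma=\sigma_0$, so $r(L_{\sigma_0})=1$ and $Z_n:=\sum_{|w|=n}(Df_w)(x_0)^{\sigma_0}\in[m_1,m_2]$ for all $n$. Let $\mu$ be a weak-$*$ limit of the probability measures $\mu_n$ assigning mass $(Df_w)(x_0)^{\sigma_0}/Z_n$ to each $f_w(A)$, $|w|=n$ (equivalently, $\mu$ can be produced from a positive eigenfunctional of $L_{\sigma_0}^{*}$). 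Using the chain rule $(Df_{wu})(x)=(Df_w)(f_u(x))(Df_u)(x)$, bounded distortion, and $\sum_{|u|=k}(Df_u)(x_0)^{\sigma_0}\le m_2$, one gets $\mu_n(f_w(A))\le C'(Df_w)(x_0)^{\sigma_0}$ for $n>|w|$, hence $\mu(f_w(A))\le C''(Df_w)(y)^{\sigma_0}$ for all $w,y$. Finally, for a set $U$ with $0<\operatorname{diam}U<\delta$, disjointness confines $U\cap A$ to one $f_{i_1}(A)$, and applying the inverse homeomorphisms repeatedly yields a word $w$ with $U\cap A\subseteq f_w(A)$ and $f_w^{-1}(U\cap A)$ meeting at least two of the $f_j(A)$ (should the process not terminate, $U\cap A$ is a singleton and $\mu(U)=0$); then $\delta\le\operatorname{diam}f_w^{-1}(U\cap A)\le C(Df_w)(y)^{-1}\operatorname{diam}U$, so $\mu(U)\le\mu(f_w(A))\le C''(C/\delta)^{\sigma_0}(\operatorname{diam}U)^{\sigma_0}$. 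By the mass distribution principle $H^{\sigma_0}(A)>0$, so $\dim_H A\ge\sigma_0$; with the upper bound this gives $\dim_H A=\sigma_0$.
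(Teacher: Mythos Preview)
This theorem is not proved in the present paper; it is quoted as Theorem~1.2 of \cite{Nussbaum1} and used as a black box --- most notably inside the proof of Theorem~\ref{mainthm}, where it is applied to the sub-IFS $\{f_{jk}:j\in I^{n}\}$ whose pieces are pairwise disjoint. So there is no in-paper argument to set your proposal against.

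That said, your outline is the standard conformal-IFS route and matches what \cite{Nussbaum1} does: word-uniform bounded distortion, the eigen-identity $L_\sigma^{n}u_\sigma=r(L_\sigma)^{n}u_\sigma$ to pin down $\sum_{|w|=n}(Df_w)^{\sigma}$, cylinder covers for the upper bound, and a Gibbs-type measure (equivalently a positive eigenfunctional of $L_{\sigma_0}^{*}$) together with the $\delta$-gap from the SSC and the mass distribution principle for the lower bound. The upper and lower bound paragraphs are correct as written.

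The one step that deserves more care is exactly the one you flag. The assertion that H\"older continuity of $x\mapsto(Df_i)(x)$ ``gives a power rate'' for the single-step errors $\rho_k/(Df_{i_k})(u_k)-1$ is not automatic in a bare metric space: the infinitesimal-similitude hypothesis is only a diagonal limit, and with no mean-value theorem available there is no direct mechanism converting H\"older regularity of $Df_i$ into a rate for $\big|\,d(f_i(x),f_i(y))/d(x,y)-(Df_i)(x)\,\big|$. What H\"older does yield cleanly is bounded logarithmic variation of the \emph{cocycle}, $(Df_w)(x)\le C^{2}(Df_w)(y)$ uniformly in $w$ (this is Lemma~\ref{new1} here, and is where your summable series $\sum c^{j\alpha}$ really lives). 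The comparison of the metric ratio $d(f_w(x),f_w(x'))/d(x,x')$ with $(Df_w)(x)$, uniformly over all words and over all $x,x'$ with $d(x,x')<\delta$, is the content of \cite[Lemma~4.3]{Nussbaum1} (recorded here as Lemma~\ref{new2}); invoke that lemma rather than try to manufacture a power rate. With Lemmas~\ref{new1} and~\ref{new2} in hand your argument goes through, and you do not need the global version for all $x,x'\in X$: the local estimate together with injectivity of each $f_i$ on $A$ handles the $\delta$-gap step in the lower bound.
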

Note that the strong separation condition(SSC), that is, $$ f_i(A) \cap f_j(A) = \emptyset~ \text{for} ~1 \le i,j \le N, i \ne j,$$ is very strong, because it will cover only cantor type sets which have less importance over connected sets.
The authors of \cite{Nussbaum1} posed a question that whether the above result holds if we assume the strong open set condition(SOSC) instead of SSC. The present article gives an affirmative answer to the question. Though the OSC and SOSC are equivalent in Euclidean spaces for the IFS consisting of similitudes \cite{Bandt,Schief1} and conformal maps \cite{Lau,Lau2,Peres,Ye}. But we should emphasize that the open set condition(OSC) and the SOSC are not equivalent for the IFS consisting even similitudes in complete metric spaces, see, \cite{Schief2}.

Our paper is a continuation of work reported in \cite{Nussbaum1}. 
\subsection{Hausdorff dimension and Box dimension }
 
Let $(X,d)$ be a separable metric space. If $U$ is any non-empty subset of $X,$ the diameter of $U$ is defined as $$|U|=\sup\{d(x,y): x,y \in U\}.$$ Suppose $F$ is a subset of $X$ and $s$ is a non-negative real number. The $s-$dimensional Hausdorff measure of $F$ is defined as $$H^s(F)= \lim_{\delta \rightarrow 0^+} \Big[\inf \Big\{\sum_{i=1}^{\infty}|U_i|^s:  F \subseteq \cup_{i=1}^{\infty} U_i ~\text{and}~ |U_i| < \delta \Big\} \Big] .$$
\begin{definition} (\cite{Mattila,Fal1}) 
Let $F \subset X$ and $s \ge 0.$ The Hausdorff dimension of $F$ is $$ \dim_H(F)=\inf\{s:H^s(F)=0\}=\sup\{s:H^s(F)=\infty\}.$$
\end{definition}

\begin{definition} (\cite{Fal1}) 
Let $F$ be any non-empty bounded subset of $X$ and let $N_{\delta}(F)$ be the smallest number of sets of diameter at most $\delta$ which can cover $F.$ The lower box dimension and upper box dimension of $F$ respectively are defined as $$\underline{\dim}_B(F)=\varliminf_{\delta \rightarrow 0^+} \frac{\log N_{\delta}(F)}{- \log \delta}$$
and
$$\overline{\dim}_B(F)=\varlimsup_{\delta \rightarrow 0^+} \frac{\log N_{\delta}(F)}{- \log \delta}.$$
If above two are equal, we call the common value as box dimension of $F,$   $$\dim_B(F)=\lim_{\delta \rightarrow 0^+} \frac{\log N_{\delta}(F)}{- \log \delta}.$$
\end{definition}
For basic properties related to the above concepts, we refer the reader to \cite{Fal1,Mattila}.
\begin{definition}
The Hausdorff dimension of a measure $\mu$ is defined to be $$ \dim_H(\mu)=\inf\{\dim_H(A): \mu(X \backslash A)=0\}.$$
\end{definition}

\section{Infinitesimal-similitude}

Let $(X,d_X)$ be a compact, perfect metric space and $(Y,d_Y)$ be a metric space. Let $f: X \rightarrow Y $ be a function. We define a set-valued map $(Df)^* :X \to \mathbb{R}$ as follows
\begin{equation*}
\begin{aligned}
(Df)^*(x)=\Bigg\{ & \lim_{n \to \infty} \frac{d_Y(f(x_n),f(y_n))}{d_X(x_n,y_n)}:  \text{for some}~ (x_n), (y_n) ~\text{with}~ x_n \ne y_n ~ \text{for}~ n \ge 1\\ & \text{and} ~ x_n \to x , y_n \to x \Bigg\}.
\end{aligned}
\end{equation*}
\begin{definition}
A mapping $f:X \to Y$ is said to an infinitesimal similitude at $x$ if $(Df)^*(x)$ is a nonempty and singleton set. We denote $(Df)^*$ by simply $(Df)$. Further, if $f$ is an infinitesimal similitude at $x$ for all $x \in X$ then we say that $f$ is an infinitesimal similitude on $X$. 
\end{definition}

\par
\begin{theorem}
The set $(Df)^*(x)$ is a closed subset of $\mathbb{R}$ for each $x \in X.$
\end{theorem}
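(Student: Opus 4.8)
The plan is to prove that $(Df)^*(x)$ is (sequentially) closed in $\mathbb{R}$ by a diagonalization argument. If $(Df)^*(x)=\emptyset$ there is nothing to prove, so assume $\ell_k\in (Df)^*(x)$ for $k\ge 1$ and $\ell_k\to\ell$ in $\mathbb{R}$; the aim is to show $\ell\in (Df)^*(x)$.

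First I would unwind the definition for each fixed $k$: there exist sequences $(x_n^{(k)})_{n\ge 1}$ and $(y_n^{(k)})_{n\ge 1}$ in $X$ with $x_n^{(k)}\ne y_n^{(k)}$ for all $n$, with $x_n^{(k)}\to x$ and $y_n^{(k)}\to x$ as $n\to\infty$, and with $\dfrac{d_Y\big(f(x_n^{(k)}),f(y_n^{(k)})\big)}{d_X\big(x_n^{(k)},y_n^{(k)}\big)}\to \ell_k$ as $n\to\infty$. Since these are three simultaneous convergences in $n$ for the same fixed $k$, I can choose a single index $n_k$ so large that $d_X(x_{n_k}^{(k)},x)<\tfrac1k$, $d_X(y_{n_k}^{(k)},x)<\tfrac1k$, and $\left|\dfrac{d_Y\big(f(x_{n_k}^{(k)}),f(y_{n_k}^{(k)})\big)}{d_X\big(x_{n_k}^{(k)},y_{n_k}^{(k)}\big)}-\ell_k\right|<\tfrac1k$. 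Put $p_k:=x_{n_k}^{(k)}$ and $q_k:=y_{n_k}^{(k)}$.

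Then $(p_k)$ and $(q_k)$ are sequences in $X$ with $p_k\ne q_k$ for every $k$, and $p_k\to x$, $q_k\to x$ by the first two estimates. For the quotients, the triangle inequality gives
$$\left|\frac{d_Y\big(f(p_k),f(q_k)\big)}{d_X(p_k,q_k)}-\ell\right|\le \left|\frac{d_Y\big(f(p_k),f(q_k)\big)}{d_X(p_k,q_k)}-\ell_k\right|+|\ell_k-\ell|<\frac1k+|\ell_k-\ell|,$$
and the right-hand side tends to $0$ as $k\to\infty$, so $\dfrac{d_Y\big(f(p_k),f(q_k)\big)}{d_X(p_k,q_k)}\to\ell$. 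Thus $\ell$ is realized as the limiting ratio along the admissible pair of sequences $(p_k),(q_k)$, which is precisely what it means for $\ell$ to lie in $(Df)^*(x)$. Hence $(Df)^*(x)$ is closed.

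The only point that needs any care is the simultaneous choice of the index $n_k$: for each fixed $k$ this amounts to intersecting finitely many cofinite sets of indices $n$, so such an $n_k$ always exists, and this is the one place the argument could look delicate but is not. Note also that neither compactness nor perfectness of $X$ is used here, and the proof is indifferent to whether $(Df)^*(x)$ happens to be a singleton, so it applies verbatim in the infinitesimal-similitude setting considered later.
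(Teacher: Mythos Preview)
Your proof is correct and follows essentially the same diagonalization argument as the paper: pick, for each $k$, an index far enough along the $k$-th witnessing pair of sequences so that the difference quotient is within $1/k$ of $\ell_k$, then apply the triangle inequality. If anything, your version is slightly more careful than the paper's, since you also explicitly arrange $d_X(p_k,x)<1/k$ and $d_X(q_k,x)<1/k$ to guarantee that the diagonal sequences actually converge to $x$, a point the paper leaves implicit.
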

\begin{proof}
Let $x \in X.$ If $(Df)^*(x) = \emptyset$, then nothing to prove. 
Let $(z_n)$ be a sequence in $(Df)^*(x)$ and $ z_n \to z$ as $ n \to \infty.$
To show $ z \in (Df)^*(x),$ we proceed as follows.
By definition of $(Df)^*(x)$, choose $x_{n,m} \ne y_{n,m}$ with $x_{n,m} \to x, y_{n,m} \to x $ as $m \to \infty$ and $$\lim_{m \to \infty} \frac{d_Y(f(x_{n,m}),f(y_{n,m}))}{d_X(x_{n,m},y_{n,m})}= z_n.$$ 
For sufficiently large $N_n \in \mathbb{N}$,
$$ \Bigg| \frac{d_Y(f(x_{n,N_n}),f(y_{n,N_n}))}{d_X(x_{n,N_n},y_{n,N_n})} - z_n \Bigg| < \frac{1}{n}.$$
This with triangle inequality produces
$$ \Bigg| \frac{d_Y(f(x_{n,N_n}),f(y_{n,N_n}))}{d_X(x_{n,N_n},y_{n,N_n})} - z \Bigg| \le
\Bigg| \frac{d_Y(f(x_{n,N_n}),f(y_{n,N_n}))}{d_X(x_{n,N_n},y_{n,N_n})} - z_n \Bigg|+ |z_n -z|,$$ hence the claim.
\end{proof}
\begin{example}
Let $f: \mathbb{C} \to \mathbb{C}$ be a map defined by $f(z)= \bar{z}$. Observe that $|f(z)-f(w)| =|z-w|~ \forall z,w \in \mathbb{C}$, and $f$ is nowhere $\mathbb{C}-$differentiable. Also note that $(Df)(z)=1$ for every $z \in \mathbb{C}$, that is, $f$ is an infinitesimal similitude.
\end{example}
\begin{example}
Let $f: \mathbb{R} \to \mathbb{R}$ be a map defined by $f(x)= |x|$. Observe that $|f(x)-f(y)| \le |x-y|~ \forall x,y \in \mathbb{R}$, and $f$ is not differentiable at $x=0.$ By taking sequences $x_n= \frac{1}{n}, y_n = \frac{-1}{n}$ we get $ \lim_{n \to \infty} \frac{\big||x_n| -|y_n|\big|}{|x_n -y_n|}=0.$ For sequences $x_n= \frac{1}{n}, y_n = 0$, we get $ \lim_{n \to \infty} \frac{\big||x_n| -|y_n|\big|}{|x_n -y_n|}=1.$
Hence $f$ is not an infinitesimal similitude at $x=0.$ 
\end{example}
\begin{example}
Define $f: \mathbb{R} \to \mathbb{R}$ by \begin{equation*} f(x) =
       \begin{cases}
     x^2 \sin(\frac{1}{x}),~~ \text{if}~~ x \ne 0 \\
          0 ,~~ \text{otherwise}.
      \end{cases}
   \end{equation*}
   Then $f$ is differentiable on $\mathbb{R}.$ We also have
   \begin{equation*} f'(x) =
          \begin{cases}
        2x \sin(\frac{1}{x})- \cos(\frac{1}{x}),~~ \text{if}~~ x \ne 0 \\
             0 ,~~ \text{otherwise}.
         \end{cases}
      \end{equation*}
    Note that $f'$ is not continuous at $x=0.$ Also $f$ is not an infinitesimal similitude at $x=0.$
\end{example}
\begin{theorem}
Let $f:\mathbb{R} \to \mathbb{R}$ be a differentiable function. Then the $Df$ exists at $x_0$ if and only if modulus of the derivative $|f'|:\mathbb{R} \to \mathbb{R}$ is continuous at $x_0$. In particular, if $Df$ exists then $Df=|f'|.$
\end{theorem}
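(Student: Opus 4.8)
The plan is to describe $(Df)^*(x_0)$ through the behavior of $|f'|$ near $x_0$ and to read off the statement. First I would note that, since every point of $\mathbb{R}$ is a limit of points distinct from it, one may take $y_n=x_0$ and $x_n\to x_0$ with $x_n\neq x_0$ in the definition of $(Df)^*(x_0)$; differentiability of $f$ at $x_0$ gives $\dfrac{|f(x_n)-f(x_0)|}{|x_n-x_0|}\to|f'(x_0)|$, so $|f'(x_0)|\in(Df)^*(x_0)$ for every differentiable $f$. Hence $(Df)$ exists at $x_0$ precisely when $(Df)^*(x_0)=\{|f'(x_0)|\}$, and in that case $Df(x_0)=|f'(x_0)|$; it therefore remains only to characterize when $(Df)^*(x_0)$ is a singleton.

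For the implication ``$|f'|$ continuous at $x_0 \Longrightarrow Df$ exists at $x_0$'', take any $L\in(Df)^*(x_0)$ witnessed by $x_n\neq y_n$ with $x_n,y_n\to x_0$. The mean value theorem applied on the interval with endpoints $x_n,y_n$ produces $\xi_n$ strictly between them with $\left|\dfrac{f(x_n)-f(y_n)}{x_n-y_n}\right|=|f'(\xi_n)|$ and $\xi_n\to x_0$; therefore $|f'(\xi_n)|\to L$, while continuity of $|f'|$ at $x_0$ forces $|f'(\xi_n)|\to|f'(x_0)|$, so $L=|f'(x_0)|$. Thus $(Df)^*(x_0)=\{|f'(x_0)|\}$.

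For the converse I would argue by contraposition together with a short \emph{realization step}: given any sequence $\eta_n\to x_0$, differentiability of $f$ at each $\eta_n$ lets us choose $t_n$ with $0<|t_n-\eta_n|<\frac1n$ and $\left|\dfrac{f(t_n)-f(\eta_n)}{t_n-\eta_n}-f'(\eta_n)\right|<\frac1n$; then $(t_n,\eta_n)$ is an admissible pair in the definition of $(Df)^*(x_0)$ and, by the reverse triangle inequality, $\left|\dfrac{f(t_n)-f(\eta_n)}{t_n-\eta_n}\right|$ has the same limit as $|f'(\eta_n)|$ whenever the latter converges. Now assume $|f'|$ is not continuous at $x_0$, so there are $\varepsilon>0$ and $\xi_n\to x_0$ with $\big||f'(\xi_n)|-|f'(x_0)|\big|\ge\varepsilon$. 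If $(|f'(\xi_n)|)_n$ has a bounded subsequence, pass to a convergent subsequence with limit $L\neq|f'(x_0)|$ and apply the realization step with $\eta_n=\xi_n$ to get $L\in(Df)^*(x_0)$. If instead $|f'(\xi_n)|\to\infty$, put $c=f'(x_0)$ and $v=|c|+1$; for large $n$ either $f'(\xi_n)>v>c$ or $f'(\xi_n)<-v<c$, so by Darboux's theorem (derivatives enjoy the intermediate value property) there is $\eta_n$ strictly between $x_0$ and $\xi_n$ with $|f'(\eta_n)|=v$, whence $\eta_n\to x_0$ and $|f'(\eta_n)|\to v$, and the realization step gives $v\in(Df)^*(x_0)$. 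In both cases $(Df)^*(x_0)$ contains a point other than $|f'(x_0)|$, which itself always lies in $(Df)^*(x_0)$ by the first paragraph, so $(Df)^*(x_0)$ is not a singleton and $Df$ fails to exist at $x_0$.

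I expect the only real obstacle to be the blow-up case $|f'(\xi_n)|\to\infty$: a priori $|f'|$ could be discontinuous at $x_0$ merely by being unbounded near it while $(Df)^*(x_0)$ stays a singleton, and it is exactly Darboux's theorem that rules this out, since unboundedness of $f'$ near $x_0$ forces $|f'|$ to assume a whole interval of values in every neighborhood of $x_0$. Everything else reduces to the mean value theorem, the definition of the derivative, and passing to subsequences.
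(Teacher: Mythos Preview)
Your proof is correct. For the implication ``$|f'|$ continuous at $x_0\Rightarrow Df$ exists at $x_0$'' you and the paper do the same thing: apply the mean value theorem to an arbitrary admissible pair $(x_n,y_n)$ and use continuity of $|f'|$ at $x_0$.

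For the converse you take a genuinely different route. The paper simply computes $Df(x_0)=|f'(x_0)|$ from the special choice $y_n=x_0$ and then cites Lemma~\ref{conti} (continuity of $x\mapsto Df(x)$ for infinitesimal similitudes) to conclude that $|f'|$ is continuous at $x_0$. You instead argue by contraposition: a realization step shows that every subsequential limit of $|f'(\eta_n)|$ along $\eta_n\to x_0$ lies in $(Df)^*(x_0)$, and Darboux's theorem disposes of the blow-up case $|f'(\xi_n)|\to\infty$ by manufacturing a bounded sequence $|f'(\eta_n)|\equiv v\neq|f'(x_0)|$. Your argument is entirely self-contained and, in particular, is genuinely pointwise: Lemma~\ref{conti} as stated assumes $f$ is an infinitesimal similitude on all of $X$, whereas the hypothesis here is only that $Df$ exists at the single point $x_0$, so the paper's citation needs a local version of that lemma to be fully justified. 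Your explicit use of the intermediate value property of derivatives is exactly what closes that gap.
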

\begin{proof}
Suppose $Df$ exists at $x_0$. Then, for $x_n \to x_0$, $x_n \ne x$, $$ Df(x_0)= \lim_{n \to \infty} \frac{|f(x_n)-f(x_0)|}{|x_n -x_0|} =\bigg|\lim_{n \to \infty} \frac{f(x_n)-f(x_0)}{x_n -x_0} \bigg|=|f'(x_0)|.$$
By Lemma \ref{conti}, $|f'| $ is continuous at $x_0.$ Now, suppose $|f'|:\mathbb{R} \to \mathbb{R}$ is continuous at $x_0$. Let $x_n \ne y_n$ such that $x_n \to x_0$ and $y_n \to x_0$. By mean value theorem, $$\bigg|\frac{f(x_n)-f(y_n)}{x_n -y_n}\bigg|= |f'(\xi_n)|,$$ where either $\xi_n \in (x_n,y_n)$ or $\xi_n \in (y_n,x_n).$ Since $x_n \to x_0$ and $y_n \to x_0$, we get $\xi_n \to x_0$. Continuity of $|f'|$ at $x_0$ in turn yields that $Df$ exists at $x_0.$

\end{proof}
\begin{theorem}
Let $f,g:X \to \mathbb{R}$ be infinitesimal similitudes at $x_0 \in X$. Let $K \in \mathbb{R}$. Then we have the following
\begin{enumerate}
\item The function $Kf$ is an infinitesimal similitude at $x_0$, and $\big(D(Kf)\big)(x_0) =|K|(Df)(x_0).$
\item If $\big(D(f+g)\big)(x_0)$ exists then $ \big(D(f+g)\big)(x_0) \le (Df)(x_0) + (Dg)(x_0).$
\item If $\big(D(fg)\big)(x_0)$ exists then $ \big(D(fg)\big)(x_0) \le |g(x_0)|~(Df)(x_0) + |f(x_0)|~(Dg)(x_0).$
\item If $g(x_0) \ne 0$ and $\bigg(D\big(\frac{f}{g}\big)\bigg)(x_0)$ exists then $$\bigg(D\big(\frac{f}{g}\big)\bigg)(x_0) \le \frac{|g(x_0)|~(Df)(x_0)-|f(x_0)|~(Dg)(x_0)}{g^2(x_0)}.$$
\end{enumerate}
\end{theorem}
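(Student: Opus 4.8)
The plan is to reduce every assertion to an estimate on the difference quotient $\frac{|\phi(x_n)-\phi(y_n)|}{d_X(x_n,y_n)}$ evaluated along an arbitrary \emph{admissible} pair of sequences, by which I mean $x_n\to x_0$, $y_n\to x_0$ with $x_n\ne y_n$. The guiding principle is that whenever $(D\phi)(x_0)$ exists, the set $(D\phi)^*(x_0)$ is the singleton $\{(D\phi)(x_0)\}$, so any such difference quotient that converges is forced to converge to $(D\phi)(x_0)$; this lets me identify the value of $D$ on a combination by producing the limit along one convenient family of sequences while bounding it along all of them. I will also use that an infinitesimal similitude is continuous at $x_0$: specializing the second sequence to the constant $y_n\equiv x_0$ gives $|\phi(x_n)-\phi(x_0)|\le(\text{const})\,d_X(x_n,x_0)\to 0$, and this continuity is what makes the product and quotient manipulations legitimate.

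Part (1) is immediate: for $K\ne 0$ one has $\frac{|Kf(x_n)-Kf(y_n)|}{d_X(x_n,y_n)}=|K|\,\frac{|f(x_n)-f(y_n)|}{d_X(x_n,y_n)}$, so $(D(Kf))^*(x_0)=|K|\,(Df)^*(x_0)$ is again a singleton equal to $|K|(Df)(x_0)$; the case $K=0$ is trivial. For part (2) I expand $(f+g)(x_n)-(f+g)(y_n)$, apply the triangle inequality to the numerator, divide by $d_X(x_n,y_n)$, and pass to the limit: since $(D(f+g))(x_0)$ is assumed to exist the left-hand side converges to it, while the right-hand side converges to $(Df)(x_0)+(Dg)(x_0)$.

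For part (3) I use the splitting
\[
f(x_n)g(x_n)-f(y_n)g(y_n)=f(x_n)\big(g(x_n)-g(y_n)\big)+g(y_n)\big(f(x_n)-f(y_n)\big),
\]
divide by $d_X(x_n,y_n)$, and estimate by the triangle inequality. Letting $n\to\infty$ and invoking continuity ($f(x_n)\to f(x_0)$, $g(y_n)\to g(x_0)$) together with the existence of the three infinitesimal derivatives yields $(D(fg))(x_0)\le|f(x_0)|(Dg)(x_0)+|g(x_0)|(Df)(x_0)$, which is the asserted inequality.

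Part (4) is where I expect the real difficulty, and it lies precisely in the sign of the numerator. The quotient identity
\[
\frac{f}{g}(x_n)-\frac{f}{g}(y_n)=\frac{g(y_n)\big(f(x_n)-f(y_n)\big)-f(y_n)\big(g(x_n)-g(y_n)\big)}{g(x_n)g(y_n)}
\]
does carry a difference in its numerator, but bounding it by the triangle inequality converts that difference into a sum and produces a \emph{plus}-sign bound, not the stated one. To force the minus sign I would instead abandon the triangle inequality and exploit the factorization $f=(f/g)\cdot g$: applying part (3) to this product gives $(Df)(x_0)\le|f(x_0)/g(x_0)|\,(Dg)(x_0)+|g(x_0)|\,(D(f/g))(x_0)$, and isolating $(D(f/g))(x_0)$ recovers exactly $\frac{|g(x_0)|(Df)(x_0)-|f(x_0)|(Dg)(x_0)}{g^2(x_0)}$ with the correct minus sign. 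The obstacle is that this route delivers that quantity as a \emph{lower} bound for $(D(f/g))(x_0)$, whereas the statement asserts it as an upper bound; thus the two natural estimates point respectively to a plus-sign upper bound and a minus-sign lower bound. Establishing part (4) exactly as written therefore hinges on reconciling this direction, and the crux of the argument is to determine whether the intended inequality carries the reverse direction or a plus sign, since the minus-sign upper bound is the assertion that neither derivation supports.
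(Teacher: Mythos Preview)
The paper states this theorem without proof, so there is nothing to compare your argument against on the paper's side. Your treatment of parts (1)--(3) is correct and is exactly the standard derivation: the scalar case is trivial, the sum follows from the triangle inequality on the numerator, and the product follows from the usual splitting together with continuity of $f$ and $g$ at $x_0$ (which, as you note, is forced by the existence of $(Df)(x_0)$ and $(Dg)(x_0)$).

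Your diagnosis of part (4) is also correct, and in fact the inequality as printed in the paper is false. Take $X=\mathbb{R}$, $f\equiv 1$, $g(x)=x$, $x_0=1$. Then $(Df)(1)=0$, $(Dg)(1)=1$, and $(D(f/g))(1)=|(1/x)'|_{x=1}=1$, while the right-hand side of the claimed inequality equals
\[
\frac{|g(1)|\,(Df)(1)-|f(1)|\,(Dg)(1)}{g(1)^2}=\frac{0-1}{1}=-1,
\]
so the asserted bound $1\le -1$ fails. Your two derivations are the only natural ones, and they give precisely the two true statements: the triangle-inequality route yields the upper bound with a \emph{plus} sign,
\[
\Big(D\big(\tfrac{f}{g}\big)\Big)(x_0)\le \frac{|g(x_0)|\,(Df)(x_0)+|f(x_0)|\,(Dg)(x_0)}{g(x_0)^2},
\]
and the factorization $f=(f/g)\cdot g$ combined with part (3) yields the \emph{lower} bound with the minus sign. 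So you have not missed an idea; the printed version of (4) simply has the wrong direction (equivalently, the wrong sign), and your proposal already isolates the correct inequalities that replace it.
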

\begin{remark}
We emphasize on that fact that in the above theorem strict inequality can occur. For example, define $f(x)=x$ and $g(x)= -x$. Here $(Df)(x)=1$ and $(Dg)(x)=1$ for each $x \in \mathbb{R}$, but $f+g =0$, which implies that $$0 = (D(f+g))(x)< (Df)(x)+ (Dg)(x)=2$$ for each $x \in \mathbb{R}$.
\end{remark}

\begin{lemma}[\cite{Nussbaum1}, Lemma $1.1$]
The map $\sigma \to r(L_{\sigma})$ is continuous and strictly decreasing. Furthermore, there is a unique $\sigma_0 \ge 0$ such that $r(L_{\sigma_0})=1.$
\end{lemma}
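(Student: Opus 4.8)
The plan is to derive the whole lemma from one comparison inequality between the operators $L_{\sigma}$ at different values of $\sigma$, combined with two standard facts about the spectral radius of positive operators on $C(X)$: that $0\le S\le T$ implies $r(S)\le r(T)$, and that $r(tT)=t\,r(T)$ for $t\ge 0$. The first preliminary step is to record a uniform bound on the derivatives. Since $f_i$ is a contraction with coefficient $c_i$, any sequences $x_n\to x$, $y_n\to x$ with $x_n\ne y_n$ realising the limit defining $(Df_i)(x)$ satisfy $d_X(f_i(x_n),f_i(y_n))\le c_i\,d_X(x_n,y_n)$ for every $n$, so $(Df_i)(x)\le c_i$; together with the standing hypothesis $(Df_i)(x)>0$ this gives $0<(Df_i)(x)\le c_i$ for all $x$. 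Put $c:=\max_{1\le i\le N}c_i<1$ and $m:=\min_{1\le i\le N}\min_{x\in X}(Df_i)(x)$; by compactness of $X$ and strict positivity and continuity of each $(Df_i)$ one has $0<m\le c<1$.

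Next, for $0\le\sigma_1\le\sigma_2$, any $g\in C(X)$ with $g\ge 0$, and any $x\in X$, the bound $m\le(Df_i)(x)\le c<1$ and the monotonicity of $t\mapsto t^{\sigma_2-\sigma_1}$ give $m^{\sigma_2-\sigma_1}\big((Df_i)(x)\big)^{\sigma_1}\le\big((Df_i)(x)\big)^{\sigma_2}\le c^{\sigma_2-\sigma_1}\big((Df_i)(x)\big)^{\sigma_1}$; multiplying by $g(f_i(x))\ge 0$ and summing over $i$ yields
\[
 m^{\sigma_2-\sigma_1}\,L_{\sigma_1}\ \le\ L_{\sigma_2}\ \le\ c^{\sigma_2-\sigma_1}\,L_{\sigma_1}
\]
in the order of positive operators on $C(X)$. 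Applying the two spectral-radius facts above then gives
\[
 m^{\sigma_2-\sigma_1}\,r(L_{\sigma_1})\ \le\ r(L_{\sigma_2})\ \le\ c^{\sigma_2-\sigma_1}\,r(L_{\sigma_1}),\qquad 0\le\sigma_1\le\sigma_2 .
\]
I would also note that $r(L_\sigma)>0$ for every $\sigma\ge 0$: the strictly positive eigenvector $u_\sigma$ supplied by \cite[Theorem 5.4]{Nussbaum4} satisfies $r(L_\sigma)u_\sigma=L_\sigma u_\sigma$ and $(L_\sigma u_\sigma)(x)=\sum_i\big((Df_i)(x)\big)^\sigma u_\sigma(f_i(x))>0$, so $r(L_\sigma)>0$ (alternatively $L_\sigma\mathbf 1\ge Nm^\sigma\,\mathbf 1$ and a Collatz--Wielandt estimate give $r(L_\sigma)\ge Nm^\sigma>0$).

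From the displayed two-sided estimate everything follows. For $\sigma_1<\sigma_2$ one has $c^{\sigma_2-\sigma_1}<1$ and $r(L_{\sigma_1})>0$, hence $r(L_{\sigma_2})\le c^{\sigma_2-\sigma_1}r(L_{\sigma_1})<r(L_{\sigma_1})$, which is strict monotonicity. For continuity at a fixed $\sigma\ge 0$: if $\sigma'>\sigma$ the estimate gives $m^{\sigma'-\sigma}r(L_\sigma)\le r(L_{\sigma'})\le c^{\sigma'-\sigma}r(L_\sigma)$, and if $\sigma'<\sigma$ (using the estimate with $\sigma_1=\sigma'$, $\sigma_2=\sigma$) it gives $c^{\sigma'-\sigma}r(L_\sigma)\le r(L_{\sigma'})\le m^{\sigma'-\sigma}r(L_\sigma)$; in either case the two bounding factors are powers of $m$ or $c$ with exponent tending to $0$, so $r(L_{\sigma'})\to r(L_\sigma)$ as $\sigma'\to\sigma$, using $0<r(L_\sigma)<\infty$. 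Finally, $L_0\mathbf 1=N\mathbf 1$ with $\mathbf 1>0$, so the cited eigenvector result gives $r(L_0)=N\ge 1$; taking $\sigma_1=0$ in the estimate gives $r(L_\sigma)\le c^{\sigma}N\to 0$ as $\sigma\to\infty$. Thus $\sigma\mapsto r(L_\sigma)$ is continuous on $[0,\infty)$ with value $\ge 1$ at $0$ and limit $0$ at infinity, so the intermediate value theorem produces $\sigma_0\ge 0$ with $r(L_{\sigma_0})=1$, and strict monotonicity makes $\sigma_0$ unique.

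The only step with genuine content is the operator sandwich and, inside it, the bound $(Df_i)(x)\le c_i<1$: it is precisely this strict inequality (not merely $(Df_i)(x)<\infty$) that simultaneously forces strict monotonicity of $r(L_\sigma)$ and drives it to $0$ as $\sigma\to\infty$. The remaining ingredients — monotonicity and positive homogeneity of the spectral radius for positive operators on $C(X)$, and the intermediate value theorem — are standard; I would nonetheless state the Banach-lattice facts explicitly, with a one-line proof via $\|S^n\|\le\|T^n\|$ for dominated positive operators, so that the argument is self-contained.
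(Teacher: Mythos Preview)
The paper does not supply a proof of this lemma; it is quoted verbatim from \cite{Nussbaum1} (Lemma~1.1 there) and used as a black box. Your proposal therefore cannot be compared with an in-paper argument, but it stands on its own as a correct and complete proof.

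Your approach is the standard one for Perron--Frobenius-type transfer operators: reduce everything to the single sandwich $m^{\sigma_2-\sigma_1}L_{\sigma_1}\le L_{\sigma_2}\le c^{\sigma_2-\sigma_1}L_{\sigma_1}$ in the cone order of $C(X)$, and then read off continuity, strict monotonicity, and the asymptotics $r(L_\sigma)\to 0$ from monotonicity and homogeneity of the spectral radius. All steps are sound; the key inputs you identify --- the uniform bound $0<m\le (Df_i)(x)\le c<1$ (from compactness, strict positivity, and the contraction hypothesis) and the Banach-lattice fact $0\le S\le T\Rightarrow r(S)\le r(T)$ --- are exactly what is needed, and your sketched one-line justification via $\lVert S^n\rVert\le\lVert T^n\rVert$ is fine in $C(X)$. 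The computation $r(L_0)=N$ is also correct: $L_0\mathbf 1=N\mathbf 1$ gives $r(L_0)\ge N$, and $\lVert L_0\rVert\le N$ gives the reverse inequality, so you need not invoke any uniqueness statement from \cite{Nussbaum4} here. One cosmetic point: where you write ``monotonicity of $t\mapsto t^{\sigma_2-\sigma_1}$'', you are using that this map is increasing on $(0,1]$ for a nonnegative exponent, so the two-sided bound on $(Df_i)(x)$ transfers directly; this is clear but worth stating explicitly if you want the write-up to be fully self-contained.
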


\begin{lemma}[\cite{Nussbaum1}, Lemma $4.1$]\label{conti}
If $f: X \rightarrow Y$ is an infinitesimal similitude, then $x \to (Df)(x)$ is continuous.
\end{lemma}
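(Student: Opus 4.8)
The plan is to argue by contradiction, reducing everything to the defining ``singleton'' property of $(Df)^*$ at a single point together with the fact, proved above, that $(Df)^*(x_0)$ is closed.

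Suppose $x\mapsto (Df)(x)$ is discontinuous at some $x_0\in X$, so there are $\varepsilon>0$ and $x_n\to x_0$ with $|(Df)(x_n)-(Df)(x_0)|\ge\varepsilon$ for all $n$. First I would replace each $x_n$ by a genuine difference-quotient pair near $x_0$. Since $(Df)^*(x_n)=\{(Df)(x_n)\}$ is non-empty, fix sequences $(x_{n,m})_m,(y_{n,m})_m$ with $x_{n,m}\ne y_{n,m}$, $x_{n,m}\to x_n$, $y_{n,m}\to x_n$ as $m\to\infty$, and $\tfrac{d_Y(f(x_{n,m}),f(y_{n,m}))}{d_X(x_{n,m},y_{n,m})}\to (Df)(x_n)$; then choose $m=m_n$ large enough that, setting $a_n:=x_{n,m_n}$ and $b_n:=y_{n,m_n}$, one has $a_n\ne b_n$, $d_X(a_n,x_n)<\tfrac1n$, $d_X(b_n,x_n)<\tfrac1n$, and $t_n:=\tfrac{d_Y(f(a_n),f(b_n))}{d_X(a_n,b_n)}$ satisfies $|t_n-(Df)(x_n)|<\tfrac1n$. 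By the triangle inequality in $X$ we get $a_n\to x_0$ and $b_n\to x_0$, while $t_n-(Df)(x_n)\to0$.

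Next I would dispose of the case in which $(t_n)$ is bounded. Passing to a subsequence, $t_n\to t_\infty\in\mathbb{R}$. Because $a_n\to x_0$, $b_n\to x_0$, and $a_n\ne b_n$, the number $t_\infty$ is, by the very definition of $(Df)^*$, an element of $(Df)^*(x_0)$, which equals $\{(Df)(x_0)\}$; hence $t_\infty=(Df)(x_0)$. But then $(Df)(x_n)=t_n-\big(t_n-(Df)(x_n)\big)\to (Df)(x_0)$ along this subsequence, contradicting $|(Df)(x_n)-(Df)(x_0)|\ge\varepsilon$. (The closedness of $(Df)^*(x_0)$ established above is also available, though this particular step does not need it.)

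It remains to know that $(t_n)$ --- equivalently $\big((Df)(x_n)\big)_n$ --- is bounded, and this is where the real content lies. In the setting relevant to Theorem~\ref{thm-nussbaum} each map is a contraction, hence Lipschitz, so $\tfrac{d_Y(f(p),f(q))}{d_X(p,q)}$ is bounded above on $\{p\ne q\}$ and $(t_n)$ is automatically bounded; this already completes the proof in that setting. For the bare statement one must rule out $(Df)(x_n)\to\infty$: then, exactly as in the first step, one produces $a_n,b_n\to x_0$ with $a_n\ne b_n$ and $\tfrac{d_Y(f(a_n),f(b_n))}{d_X(a_n,b_n)}\to\infty$, and the task is to convert this, using the triangle inequality in $Y$, the sequences realizing $(Df)(x_0)$, and the perfectness of $X$, into a pair of sequences converging to $(x_0,x_0)$ along which the quotient tends to a finite value different from $(Df)(x_0)$ --- contradicting that $(Df)^*(x_0)$ is a singleton. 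I expect this local-boundedness step (equivalently, the assertion that an infinitesimal similitude cannot stretch arbitrarily much in a neighbourhood of a point) to be the main obstacle; once it is in place, the diagonal extraction of the second paragraph together with the singleton property finishes the argument cleanly.
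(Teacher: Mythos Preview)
The paper does not supply its own proof of this lemma: it is quoted from \cite{Nussbaum1} (their Lemma~4.1) and used as a black box, so there is nothing here to compare your argument against directly.

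On its own merits, your bounded-case argument is correct and is essentially the standard one: the diagonal extraction produces $a_n,b_n\to x_0$ with quotients $t_n$ tracking $(Df)(x_n)$, and any subsequential limit of $(t_n)$ lands in $(Df)^*(x_0)=\{(Df)(x_0)\}$, forcing $(Df)(x_n)\to(Df)(x_0)$ along that subsequence and yielding the contradiction. For the applications in this paper, where every $f_i$ is a contraction, your remark that the Lipschitz bound makes $(t_n)$ automatically bounded closes the argument.

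The unbounded case, however, is not a proof but a sketch of a hoped-for proof, and with the definition of $(Df)^*$ as written in \emph{this} paper the gap is real. Here $(Df)^*(x_0)$ collects only those difference-quotient sequences whose limit \emph{exists in} $\mathbb{R}$; a sequence of quotients escaping to $+\infty$ contributes nothing to $(Df)^*(x_0)$ and therefore does not by itself violate the singleton condition. Your proposed mechanism---using the triangle inequality in $Y$ together with sequences realising $(Df)(x_0)$ to manufacture a pair of sequences whose quotient converges to some finite value $\ne(Df)(x_0)$---is not carried out, and it is not clear how to make it work in a general metric space without additional structure. In \cite{Nussbaum1} the definition of infinitesimal similitude is that the limit exists for \emph{every} admissible pair of sequences $(x_n),(y_n)$; under that stronger formulation the quotients are automatically locally bounded (indeed they converge), and your bounded-case argument already suffices. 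You should make explicit which definition you are working with, since under the weaker set-valued formulation recorded in this paper the continuity of $Df$ is not obviously automatic.
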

\begin{lemma}[\cite{Nussbaum1}, Lemma $4.2$] \label{Chain_lemma}
Let $f: X \rightarrow Y$ and $h: Y \rightarrow Z$ be given. If $f$ is an infinitesimal similitude at $x \in X$ and $h$ is an infinitesimal similitude at $f(x) \in Y$, then $hof$ is an infinitesimal similitude at $x \in X$ and $$ (D(hof))(x)= (Dh)(f(x))(Df)(x).$$
\end{lemma}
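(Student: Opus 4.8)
The plan is to reduce everything to the multiplicative factorisation
\begin{equation*}
\frac{d_Z\big(h(f(x_n)),h(f(y_n))\big)}{d_X(x_n,y_n)}=\frac{d_Z\big(h(f(x_n)),h(f(y_n))\big)}{d_Y\big(f(x_n),f(y_n)\big)}\cdot\frac{d_Y\big(f(x_n),f(y_n)\big)}{d_X(x_n,y_n)},
\end{equation*}
valid whenever $f(x_n)\neq f(y_n)$. Write $a:=(Dh)(f(x))$ and $b:=(Df)(x)$; the goal is to prove $(D(h\circ f))^*(x)=\{ab\}$, which is exactly the statement that $h\circ f$ is an infinitesimal similitude at $x$ with $(D(h\circ f))(x)=(Dh)(f(x))(Df)(x)$.

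I would fix \emph{arbitrary} sequences $(x_n),(y_n)$ in $X$ with $x_n\neq y_n$ and $x_n,y_n\to x$, and show that the left-hand side above tends to $ab$. Because $X$ is perfect such sequences exist, so this single claim gives both that $(D(h\circ f))^*(x)$ is nonempty and that every element of it equals $ab$. To prove the claim I would pass to a subsequence (it suffices, by a routine subsequence argument, to find in every subsequence a further subsequence along which the quotient tends to $ab$) and split into two cases. If $f(x_n)=f(y_n)$ along the subsequence, then the left-hand side is identically $0$, and so is $\tfrac{d_Y(f(x_n),f(y_n))}{d_X(x_n,y_n)}$; the latter forces $0\in(Df)^*(x)=\{b\}$, hence $b=0$ and $ab=0$, matching the limit. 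If $f(x_n)\neq f(y_n)$ along the subsequence, I would set $u_n:=f(x_n)$, $v_n:=f(y_n)$, note $u_n\neq v_n$ and — using continuity of $f$ at $x$ — $u_n,v_n\to f(x)$, and then read off from the factorisation that the first factor tends to $a$ (since $h$ is an infinitesimal similitude at $f(x)$) and the second to $b$ (since $f$ is one at $x$), so their product tends to $ab$. Assembling the cases finishes the proof.

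The step I expect to be the main obstacle is justifying that the two factors actually converge. The hypothesis only asserts that $(Df)^*(x)$ and $(Dh)^*(f(x))$ are singletons — i.e. that the corresponding difference quotients have a unique value among their limit points — so one must still rule out oscillation or unboundedness of these quotients along the sequences at hand. In the setting in which the Lemma is invoked the maps are contractions, hence Lipschitz, so the difference quotients are bounded and "unique limit point" upgrades to "convergent"; I would invoke this boundedness explicitly (or, equivalently, note at the outset that under the standing hypotheses being an infinitesimal similitude at a point means that the difference quotient converges along \emph{every} admissible pair of sequences). The same Lipschitz/contraction hypothesis supplies the continuity of $f$ at $x$ used above, and without some such hypothesis the identity $(D(h\circ f))(x)=(Dh)(f(x))(Df)(x)$ can genuinely fail. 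The remaining points — the degenerate case $f(x_n)=f(y_n)$ and the fact that division by $d_Y(f(x_n),f(y_n))$ and $d_X(x_n,y_n)$ is legitimate — are routine.
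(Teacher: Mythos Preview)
The paper does not supply its own proof of this lemma; it is quoted from \cite{Nussbaum1} (their Lemma~4.2) and used as a black box, so there is no in-paper argument to compare your proposal against. Your multiplicative factorisation of the difference quotient is the natural route and is almost certainly what the source does.

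Your diagnosis of the one real subtlety is accurate and worth stressing. The definition adopted in this paper only requires that $(Df)^*(x)$ be a \emph{singleton}, i.e.\ that every existent sequential limit of the difference quotient equal $(Df)(x)$; by itself this neither forces the quotient to be bounded along every admissible pair of sequences nor forces $f$ to be continuous at $x$. Your proposed fix---invoking the standing contraction hypothesis on the maps that are actually used in the paper---is exactly right for the applications here (all $f_i$ are Lipschitz, so boundedness of the quotients and continuity of $f$ are automatic, and ``unique subsequential limit'' upgrades to ``limit''). Just be explicit that you are then proving the chain rule under an extra hypothesis not present in the lemma's header; as a free-standing statement in the generality written, one would need either a local Lipschitz assumption or the stronger reading of ``infinitesimal similitude'' in which the quotient is required to converge along \emph{every} admissible pair of sequences. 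Your treatment of the degenerate case $f(x_n)=f(y_n)$ and the subsequence bookkeeping are fine.
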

Our main theorem is the following.
\begin{theorem}\label{mainthm}
Let $f_i: X \rightarrow X$ for $ 1 \le i \le N$ be infinitesimal similitudes and assume that the map $ x \to (Df_i)(x)$ is a strictly positive H\"older continuous function on $X.$ Assume that $f_i: X \rightarrow X $ is a contraction map with contraction coefficient $c_i$ and let $A$ denote the unique invariant set such that $$ A = \cup_{i=1}^{N}f_i(A).$$ Further, assume that $f_i,~ 1\le i \le N,$ are one-to-one on $A$ and satisfy strong open set condition. Then the Hausdorff dimension of $A$ is given by the unique $\sigma_0$ such that $r(L_{\sigma_0})=1.$
\end{theorem}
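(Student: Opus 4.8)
The plan is to prove separately that $\dim_H(A)\le\sigma_0$ and that $\dim_H(A)\ge\sigma_0$; since in general $\dim_H(A)\le\underline{\dim}_B(A)\le\overline{\dim}_B(A)$, these two bounds pin all of them to $\sigma_0$. Throughout I would use the code space $\Sigma=\{1,\dots,N\}^{\mathbb N}$, write $f_w=f_{w_1}\circ\cdots\circ f_{w_n}$ for a finite word $w$, let $\pi:\Sigma\to A$, $\pi(\omega)=\bigcap_n f_{\omega|_n}(A)$, be the coding map, and let $\sigma$ denote the shift. The first technical ingredient is a bounded distortion estimate: using Lemma~\ref{Chain_lemma} and H\"older continuity of each $Df_i$, and the facts that each $f_i$ is locally bi-Lipschitz (a contraction with $(Df_i)(x)>0$ cannot collapse ratios) and restricts to a homeomorphism of $A$ onto $f_i(A)$ (it is one-to-one on the compact set $A$), one obtains $D\ge1$ with $D^{-1}(Df_w)(q)\,d(a,b)\le d(f_w(a),f_w(b))\le D\,(Df_w)(q)\,d(a,b)$ for every word $w$, every $q\in A$ and all $a,b\in A$; in particular $|f_w(A)|\asymp (Df_w)(q)$ uniformly in $w$.

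The inequality $\dim_H(A)\le\sigma_0$ needs no separation hypothesis. For small $\delta>0$ let $W_\delta$ be the finite maximal antichain of words $w$ with $|f_{w^-}(A)|>\delta\ge|f_w(A)|$, where $w^-$ deletes the last letter; then $A=\bigcup_{w\in W_\delta}f_w(A)$ and each piece has diameter at most $\delta$. Iterating the eigenrelation $L_{\sigma_0}u_{\sigma_0}=u_{\sigma_0}$ (available because $r(L_{\sigma_0})=1$) along this complete antichain gives $\sum_{w\in W_\delta}(Df_w)(x)^{\sigma_0}u_{\sigma_0}(f_w(x))=u_{\sigma_0}(x)$ for every $x$; since $u_{\sigma_0}$ is bounded between positive constants, bounded distortion turns this into $\sum_{w\in W_\delta}|f_w(A)|^{\sigma_0}\le C$ uniformly in $\delta$, whence $H^{\sigma_0}(A)<\infty$ and $\dim_H(A)\le\sigma_0$ (the same estimate also yields $\overline{\dim}_B(A)\le\sigma_0$ via $\#W_\delta\le C'\delta^{-\sigma_0}$).

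For the lower bound I would work with the natural self-conformal measure. Since $\log(Df_i)$ is H\"older and $\pi$ is H\"older for the usual metric on $\Sigma$, the potential $\varphi(\omega)=\sigma_0\log (Df_{\omega_1})(\pi\omega)$ has summable variations and, because $r(L_{\sigma_0})=1$, topological pressure $0$; let $\tilde\mu$ be its equilibrium (Gibbs) state on the full shift and $\mu=\pi_*\tilde\mu$. The Gibbs property, the chain rule and bounded distortion give $\tilde\mu([w])\asymp (Df_w)(q)^{\sigma_0}\asymp|f_w(A)|^{\sigma_0}$ uniformly. The decisive use of the SOSC is to make $\pi$ injective off a $\tilde\mu$-null set. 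Fix an open set $U$ as in the SOSC; from $\bigcup_i f_i(\overline U)\subseteq\overline U$ and uniqueness of the attractor one gets $A\subseteq\overline U$, and (using that each $f_i(U)$ is open — this is one of the point-set points needing care — together with $f_i(U)\cap f_j(U)=\emptyset$) one gets $f_i(U)\cap f_j(A)=\emptyset$ for $i\ne j$, hence $f_v(U)\cap f_{v'}(A)=\emptyset$ whenever $v,v'$ differ in their first symbol. Now suppose $\pi\omega=\pi\omega'$ with $\omega\ne\omega'$; let $k$ be the first index with $\omega_k\ne\omega_k'$. Applying $f_{\omega|_{k-1}}^{-1}$ on $A$ (legitimate by injectivity on $A$) gives $y:=\pi(\sigma^{k-1}\omega)=\pi(\sigma^{k-1}\omega')$, with $\sigma^{k-1}\omega,\sigma^{k-1}\omega'$ differing in their first symbol. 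If $\pi(\sigma^m\omega)\in U$ for some $m\ge k$, then with $n=m-k+1$, $v=\omega_k\cdots\omega_m$, $v'=\omega_k'\cdots\omega_m'$ we would get $y=f_v(\pi\sigma^m\omega)\in f_v(U)$ and $y\in f_{v'}(A)$, contradicting $f_v(U)\cap f_{v'}(A)=\emptyset$. Thus non-uniqueness of the code forces $\pi(\sigma^m\omega)\notin U$ for all $m\ge k$. But $\pi^{-1}(U)$ is a nonempty open subset of $\Sigma$, so $\tilde\mu(\pi^{-1}(U))>0$, and equilibrium states on the full shift are ergodic, so Birkhoff's theorem gives that $\tilde\mu$-a.e.\ orbit visits $\pi^{-1}(U)$ infinitely often; hence the set of $\omega$ with non-unique code is $\tilde\mu$-null, and consequently $\mu(f_w(A))=\tilde\mu([w])$ for every $w$.

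It remains to bound $\mu(B(x,r))$ from above along a dense enough set of scales. Fix a compact $K\subseteq U$ with $\tilde\mu(\pi^{-1}(K))>0$ and put $\rho_0=d(K,X\setminus U)>0$. By Birkhoff, for $\tilde\mu$-a.e.\ $\omega$ the good set $G(\omega)=\{n:\pi(\sigma^n\omega)\in K\}$ has positive density, so its increasing enumeration $n_1<n_2<\cdots$ satisfies $n_{k+1}/n_k\to1$, and also $-\tfrac1n\log (Df_{\omega|_n})(\cdot)\to\lambda>0$. Take $\mu$-a.e.\ $x=\pi\omega$ with $\omega$ having a unique code and with $G(\omega)$ as above. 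For $n\in G(\omega)$ the point $\pi(\sigma^n\omega)$ is $\rho_0$-deep in $U$; combining $f_i(U)\cap f_j(A)=\emptyset$ for $i\ne j$ (which gives $f_{\omega|_n}(U)\cap A\subseteq f_{\omega|_n}(A)$) with bounded distortion (which makes $f_{\omega|_n}$ carry $B(\pi\sigma^n\omega,\rho_0)$ over a ball about $x$ of radius $\asymp(Df_{\omega|_n})(\cdot)$) yields $B(x,r_n)\cap A\subseteq f_{\omega|_n}(A)$ with $r_n\asymp(Df_{\omega|_n})(\cdot)\asymp|f_{\omega|_n}(A)|$, so
\[
\mu(B(x,r_n))\le\mu\big(f_{\omega|_n}(A)\big)=\tilde\mu([\omega|_n])\le C\,(Df_{\omega|_n})(\cdot)^{\sigma_0}\le C'\,r_n^{\sigma_0}.
\]
Since $|\log r_n|=\lambda n+o(n)$ while the gaps $n_{k+1}-n_k$ are $o(n_k)$, sandwiching an arbitrary small $r$ between $r_{n_{k+1}}\le r\le r_{n_k}$ and using monotonicity of $s\mapsto\mu(B(x,s))$ gives $\displaystyle\liminf_{r\to0}\frac{\log\mu(B(x,r))}{\log r}\ge\sigma_0$ for $\mu$-a.e.\ $x$, so by the mass distribution principle $\dim_H(A)\ge\dim_H(\mu)\ge\sigma_0$. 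The main obstacle I expect is precisely the interface between dynamics and geometry used here: transferring the separation encoded in the single open set $U$ down to individual cylinders (the statements ``$f_i(U)$ is open'' and ``$f_i(U)\cap f_j(A)=\emptyset$''), and establishing the uniform bounded-distortion estimates for iterates of infinitesimal similitudes on $A$. Once these point-set and distortion facts are secured, the genuinely new ingredient — replacing the clean addressing available under the SSC by a Birkhoff argument against the orbit's returns to $U$ — runs as above.
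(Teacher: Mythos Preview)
Your approach is fundamentally different from the paper's, and it is worth spelling out the contrast. You argue the lower bound $\dim_H(A)\ge\sigma_0$ via thermodynamic formalism: build the Gibbs/conformal measure on the shift, use ergodicity and Birkhoff to show the coding map is $\tilde\mu$-a.e.\ injective, and then run a mass-distribution/local-dimension argument along return times to (a compact piece of) $U$. This is the standard route for self-conformal sets in Euclidean domains (as in Peres--Rams--Simon--Solomyak and Fan--Lau), and your outline is essentially correct in that setting.

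The paper does something much more elementary and intrinsic to the operator $L_\sigma$. From the SOSC it extracts a word $k$ with $A_k\subset U$; then for every $n$ the pieces $\{f_{jk}(A):j\in I^n\}$ are pairwise disjoint (since $f_{jk}(A)\subset f_j(U)$ and the $f_j(U)$ are disjoint by OSC), so the sub-IFS $\{f_{jk}:j\in I^n\}$ satisfies the \emph{strong separation condition} and Theorem~\ref{thm-nussbaum} applies to its attractor $A^\ast\subset A$, giving $\sigma_n\le\dim_H(A)\le\sigma_0$ where $\sigma_n$ is the spectral parameter for that sub-IFS. The remainder is a direct eigenvector computation: assuming $\dim_H(A)<\sigma_0$, the eigenrelations for $L_{\sigma_n}$ and $L_{\sigma_0}$ are played against each other to force an inequality that blows up as $n\to\infty$. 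No Gibbs measure, no ergodic theorem, no local-dimension estimate---only the already-proved SSC case and the positive eigenvectors of $L_\sigma$.

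The practical payoff of the paper's route is that it completely sidesteps the obstacle you yourself flag: in a general compact perfect metric space there is no invariance-of-domain, so $f_i(U)$ need not be open, and your deductions ``$f_i(U)\cap f_j(A)=\emptyset$'' and ``$f_{\omega|_n}(U)\cap A\subseteq f_{\omega|_n}(A)$'' are not justified as written. (A partial fix is to restrict to the shift-invariant full-measure set $E=\{\omega:\sigma^m\omega\in[k]\ \text{i.o.}\}$ for a fixed $k$ with $A_k\subset U$ and argue injectivity of $\pi|_E$ using only $f_i(U)\cap f_j(U)=\emptyset$; but the ball-containment step in your mass-distribution argument still leans on openness and would need further reworking.) The paper's reduction to the SSC sub-IFS uses only $A_k\subset U$ and the set-theoretic disjointness $f_j(U)\cap f_{j'}(U)=\emptyset$, so the openness of $f_i(U)$ never enters. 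Conversely, what your approach buys---when it goes through---is more information (the exact-dimensionality of the conformal measure, Gibbs bounds on cylinders), at the cost of importing thermodynamic formalism into a setting where the paper manages with bare spectral theory.
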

\begin{proof}
Let $U$ be an open set originated from strong open set condition for $A$. Since $U \cap A \ne \emptyset $, we have an index $k \in I^*$ with $A_k \subset U$, where $I^*:=\cup_{m \in \mathbb{N}}\{1,2,\dots, N\}^m$, that is, the set of all finite sequences made up of the elements of $I:=\{1,2,\dots , N\}$ and $A_k:=f_k(A):=f_{k_1} \circ f_{k_2}\circ \dots \circ f_{k_m} (A)$ for $k \in I^m$($m$-times Cartesian product of $I$ with itself) and for $m \in \mathbb{N}.$ Now, we could see that for any (but fixed) $n$ and $j \in I^n$, the sets $A_{jk}$ are disjoint. Furthermore, the IFS $\{f_{jk}: j \in I^n\}$ satisfies the assumptions of Theorem \ref{thm-nussbaum}. Therefore, we have $   \dim_H(A^*)=\sigma_n$, where $A^* $ is an attractor of the aforesaid IFS and $\sigma_n$ is the unique number such that $r(L_{\sigma_n})=1$ and $L_{\sigma_n}: \mathcal{C}(X) \rightarrow \mathcal{C}(X)$ is defined by $$(L_{\sigma_n}g)(y)= \sum_{j \in I^n}\big((Df_{jk})(y)\big)^{\sigma_n} g(f_{jk}(y)).$$
 Since $A^* \subset A$, we have $$ \sigma_n \le \dim_H(A) \le \sigma_0.$$ 
With the help of Lemma \ref{Chain_lemma}, we get $$ (Df_{jk})(y)=(Df_j)(f_k(y))(Df_k)(y).$$
From the above, we can write
\begin{equation}\label{Eqn1}
\begin{aligned}
 (L_{\sigma_n}g)= \sum_{j \in I^n}\big((Df_{j})(f_k(y))\big)^{\sigma_n} \big((Df_{k})(y)\big)^{\sigma_n} g(f_j(f_k(y)).
 \end{aligned}
 \end{equation}
Let $h$ be a strictly positive eigenvector corresponding to eigenvalue $r(L_{\sigma_n}).$ That is, $L_{\sigma_n}h = r(L_{\sigma_n}) h.$ Since $r(L_{\sigma_n})=1,$ we get $L_{\sigma_n}h =  h.$ From the Equation \ref{Eqn1}, we obtain
\begin{equation}
\begin{aligned}
h(y)&=\sum_{j \in I^n}\big((Df_{j})(f_k(y))\big)^{\sigma_n} \big((Df_{k})(y)\big)^{\sigma_n} h(f_j(f_k(y))\\&= \big((Df_{k})(y)\big)^{\sigma_n}\sum_{j \in I^n}\big((Df_{j})(f_k(y))\big)^{\sigma_n}  h(f_j(f_k(y))
\end{aligned}
\end{equation}
Suppose for contradiction $ \beta := \dim_H(A)< \sigma_0.$ Since $0 < (Df_i)(z) \le c_i <1, \forall ~i \in \{1,2,\dots ,N\},$ with $m_h=\min\{h(x)\}$ and $c_{\max}=\{c_1,c_2,\dots, c_N\}$ we immediately obtain
\begin{equation}\label{Eqn2}
\begin{aligned}
h(y)\big((Df_{k})(y)\big)^{-\sigma_n}& \ge  \sum_{j \in I^n}\big((Df_{j})(f_k(y))\big)^{\beta}  h(f_j(f_k(y))\\&=
\sum_{j \in I^n}\big((Df_{j})(f_k(y))\big)^{\sigma_0} \big((Df_{j})(f_k(y))\big)^{\beta-\sigma_0} h(f_j(f_k(y))\\&\ge
m_h ~c_{max}^{n(\beta- \sigma_0)}\sum_{j \in I^n}\big((Df_{j})(f_k(y))\big)^{\sigma_0}
\end{aligned}
\end{equation}
Let $h_0$ be a strictly positive eigenvector corresponding to eigenvalue $r(L_{\sigma_0}).$ That is, $L_{\sigma_0}h_0 = r(L_{\sigma_0}) h.$ Since $r(L_{\sigma_0})=1,$ we get $L_{\sigma_0}h_0 =  h_0.$ By definition of the operator $L_{\sigma_0},$ we have
\begin{equation}\label{Eqn2main}
\begin{aligned}
h_0(y)&=\sum_{i=1}^{N}\big((Df_{i})(y)\big)^{\sigma_0} h_0(f_i(y)).
\end{aligned}
\end{equation}
Now we estimate
\begin{equation}\label{Eqn22}
\begin{aligned}
\sum_{j \in I^n}\big((Df_{j})(f_k(y))\big)^{\sigma_0} =& \sum_{j \in I^n}\big((Df_{j_1 j_2 \dots j_n})(f_k(y))\big)^{\sigma_0}\\= & \sum_{j \in I^n}\big((Df_{j_1})(f_{j_2}\dots f_{j_n}f_k(y))\big)^{\sigma_0} \big((Df_{j_2})(f_{j_3}\dots f_{j_n}f_k(y))\big)^{\sigma_0} \dots \big((Df_{j_n})(f_k(y))\big)^{\sigma_0} \\= &\sum_{j_n=1}^{N}\big((Df_{j_n})(f_k(y))\big)^{\sigma_0}\sum_{j_{n-1}=1}^{N}\big((Df_{j_{n-1}})(f_{j_n}f_k(y))\big)^{\sigma_0}\\ & \dots  \sum_{j_1=1}^{N}\big((Df_{j_1})(f_{j_2}\dots f_{j_n}f_k(y))\big)^{\sigma_0}\\ \ge &\sum_{j_n=1}^{N}\big((Df_{j_n})(f_k(y))\big)^{\sigma_0}\sum_{j_{n-1}=1}^{N}\big((Df_{j_{n-1}})(f_{j_n}f_k(y))\big)^{\sigma_0}\\& \dots \sum_{j_1=1}^{N}\big((Df_{j_1})(f_{j_2}\dots f_{j_n}f_k(y))\big)^{\sigma_0} \frac{h_0(f_{j_1}f_{j_2}\dots f_{j_n}f_k(y))}{\max{h_0}}
\\= & \frac{1}{\max{h_0}}\sum_{j_n=1}^{N}\big((Df_{j_n})(f_k(y))\big)^{\sigma_0}\sum_{j_{n-1}=1}^{N}\big((Df_{j_{n-1}})(f_{j_n}f_k(y))\big)^{\sigma_0}\\& \dots \sum_{j_2=1}^{N}\big((Df_{j_2})(f_{j_3}\dots f_{j_n}f_k(y))\big)^{\sigma_0} h_0(f_{j_2}f_{j_3}\dots f_{j_n}f_k(y))\\
= & \frac{h_0(f_k(y))}{\max{h_0}},
\end{aligned}
\end{equation}
the second equality follows from Lemma \ref{Chain_lemma} and the last two equalities follow from Equation \ref{Eqn2main}. 
Using Equations \ref{Eqn2} and \ref{Eqn22}, we establish the following
\begin{equation}
\begin{aligned}
h(y)\big((Df_{k})(y)\big)^{-\sigma_n}&\ge
\frac{m_h h_0(f_k(y))}{\max{h_0}}c_{max}^{n(\beta- \sigma_0)}\\ &\ge 
\frac{m_h \min{h_0}}{\max{h_0}}c_{max}^{n(\beta- \sigma_0)}.
\end{aligned}
\end{equation}
Since $ c_{max} < 1$ and the term on left side in the above expression is bounded, we have a contradiction as $n $ tends to infinity. Thus our supposition were wrong. This implies that $ \dim_H(A) \ge  \sigma_0,$ which is the required result.
\end{proof}
\begin{remark}
Above theorem serves as an addendum to the paper of Nussbaum \cite{Nussbaum1}. It also improves some results in \cite{AP,Nussbaum2,Nussbaum3}, see the last section.
\end{remark}
\begin{remark}
For $h_0(x)=\sum_{i=1}^{N}\big((Df_{i})(x)\big)^{\sigma_0} h_0(f_i(x))$ we have $$\frac{\min{h_0}}{\max{h_0}} \le \frac{h_0(x)}{\max{h_0}} \le  \sum_{i=1}^{N}\big((Df_{i})(x)\big)^{\sigma_0} \le \frac{h_0(x)}{\min{h_0}} \le \frac{\max{h_0}}{\min{h_0}}.$$
\end{remark}
\begin{remark}
Here we note an interesting relation between Hausdorff dimensions of attractor $A$, eigenvector $u$ and $\mu.$
First we see that $\mathcal{F}(A)=A$, $L_{\sigma_0}u=u$
and $(L_{\sigma_0})^*(\mu) =\mu.$ It can be straightforwardly obtained that $$ \dim_H(\mu) \le \dim_H(A) \le \dim_H(G_u),$$ where $G_u$ denotes the graph of function $u.$
\end{remark}
\par

With the notation $A_i= f_i(A)$, $r_i= \inf_{x \in X} (Df_i)(x)$ and $R_i= \sup_{x \in X} (Df_i)(x)$ we have the following lemma
\begin{lemma}\label{new1}
Let $X$ and $f_i$ be as in Theorem \ref{mainthm}. Then 
there exists $c_1 >1 $ such that 
$R_i \le c_1 r_i $ for any $i \in I^* $ and $ \frac{r_ir_j}{c_1} \le r_{ij} \le c_1 r_i r_j$ for every $i,j \in I^*.$
\end{lemma}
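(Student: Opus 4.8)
The plan is to derive both assertions of Lemma~\ref{new1} from a single source: the continuity (Lemma~\ref{conti}) and strict positivity of each $x\mapsto (Df_i)(x)$ on the compact space $X$, combined with the multiplicative chain rule of Lemma~\ref{Chain_lemma}. First I would observe that, since $X$ is compact and $x\mapsto(Df_i)(x)$ is continuous and strictly positive for each of the finitely many $i\in I$, the quantities $0 < r_i = \inf_X (Df_i) \le \sup_X (Df_i) = R_i < \infty$ are attained, so the ratios $R_i/r_i$ are finite; let $c_0 := \max_{1\le i\le N} R_i/r_i \ge 1$. The whole lemma is really a statement about \emph{bounded distortion}: the function $(Df_i)$ does not vary by more than a fixed multiplicative factor over $X$, and this property is stable under composition of the $f_i$.

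Next I would set up the induction on word length $m = |i|$ for $i \in I^m$. By Lemma~\ref{Chain_lemma} applied repeatedly, $(Df_i)(x) = \prod_{\ell=1}^{m} (Df_{i_\ell})\big(f_{i_{\ell+1}}\circ\cdots\circ f_{i_m}(x)\big)$, so $(Df_i)$ is a product of $m$ factors, the $\ell$-th of which lies in $[r_{i_\ell}, R_{i_\ell}]$ regardless of the point at which it is evaluated. Taking infima and suprema over $x\in X$ term by term gives $\prod_\ell r_{i_\ell} \le r_i \le R_i \le \prod_\ell R_{i_\ell}$, hence
\begin{equation*}
\frac{R_i}{r_i} \;\le\; \frac{\prod_{\ell=1}^m R_{i_\ell}}{\prod_{\ell=1}^m r_{i_\ell}} \;=\; \prod_{\ell=1}^m \frac{R_{i_\ell}}{r_{i_\ell}} \;\le\; c_0^{\,m}.
\end{equation*}
This already shows $R_i \le c_0^{m} r_i$, but the constant depends on $m$, which is \emph{not} what the lemma claims. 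So the genuine content — and the main obstacle — is upgrading this word-length-dependent bound to a \emph{uniform} constant $c_1$ valid for \emph{all} $i\in I^*$ simultaneously. This cannot follow from elementary estimates alone; it must use the Hölder continuity hypothesis together with the contractivity $(Df_i)(x)\le c_i < 1$. The idea is that the argument $f_{i_{\ell+1}}\circ\cdots\circ f_{i_m}(x)$ and $f_{i_{\ell+1}}\circ\cdots\circ f_{i_m}(y)$ of the $\ell$-th factor differ by at most $(c_{\max})^{\,m-\ell}\,\mathrm{diam}(X)$, so by Hölder continuity of $(Df_{i_\ell})$ with exponent $\alpha$ and constant $H$, the log-ratio of the $\ell$-th factor between any two points is at most $C\,(c_{\max})^{\alpha(m-\ell)}$ for a fixed $C$; summing the geometric series $\sum_{\ell} (c_{\max})^{\alpha(m-\ell)} \le \frac{1}{1-(c_{\max})^{\alpha}}$ gives a bound on $\log(R_i/r_i)$ independent of $m$. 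Exponentiating yields the desired uniform $c_1 := \exp\!\big(C/(1-(c_{\max})^{\alpha})\big)$.

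Finally, the submultiplicativity statement $\frac{r_i r_j}{c_1} \le r_{ij} \le c_1 r_i r_j$ for $i,j\in I^*$ follows from the same bounded-distortion principle: by the chain rule $(Df_{ij})(x) = (Df_i)\big(f_j(x)\big)\cdot(Df_j)(x)$; the second factor ranges in $[r_j,R_j]$, while the first factor, evaluated at the point $f_j(x)\in X$, ranges in $[r_i, R_i] \subseteq [r_i, c_1 r_i]$. Multiplying, $r_i r_j \le r_{ij}$ trivially (well, $\le \inf_x (Df_i)(f_j(x))(Df_j)(x)$), and $R_{ij} \le R_i R_j \le c_1^2 r_i r_j$; combined with $r_{ij} \ge r_i r_j / \text{(something)}$ one reads off both inequalities after possibly enlarging $c_1$ (replacing it by $c_1^2$, or by $\max\{c_1^2, \text{original } c_1\}$, which is harmless since the statement only asserts \emph{existence} of such a constant). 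I would be careful to fix one constant $c_1$ at the end that serves all three inequalities at once. The crux throughout is the uniform distortion bound; once that is in hand, everything else is bookkeeping with the chain rule.
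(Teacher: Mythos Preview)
Your plan is correct and follows essentially the same route as the paper: control the oscillation of $\log(Df_i)$ via the H\"older hypothesis (combined with the mean value theorem on $\log$), then derive the quasi-multiplicativity of $r_i$ from the chain rule $(Df_{ij})(x)=(Df_i)(f_j(x))\,(Df_j)(x)$. You are in fact more explicit than the paper on the one genuinely nontrivial point, namely the geometric-series summation of contracted H\"older increments along the word that makes $c_1$ uniform over all word lengths; the paper writes the H\"older/MVT estimate for a generic $i$ and does not isolate this step. As a minor simplification in the second half, note that $(Df_{ij})(x)\ge r_i\,r_j$ holds pointwise, so $r_{ij}\ge r_i r_j$ directly and no enlargement of $c_1$ is needed for the lower inequality.
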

\begin{proof}
We first note that the map $ x \mapsto (Df_i)(x) $ is H\"older continuous with exponent $s>0,$ that is, $$ |(Df_i)(x) -(Df_i)(y)| \le K d(x,y)^s ~~\forall ~~x,y \in X,$$ for some constant $K>0.$
Let $x,y \in X.$ Then, by mean value theorem, there exists $\xi$ between $(Df_i)(x)$ and $(Df_i)(y)$ such that
\begin{equation}
\begin{aligned}
|\ln((Df_i)(x))- \ln((Df_i)(y))|& = \frac{1}{\xi} | (Df_i)(x) -(Df_i)(y)|\\ & \le \frac{K}{\xi}  d(x,y)^s\\ & \le \frac{K}{m} d(x,y)^s,
\end{aligned}
\end{equation}
the last inequality follows because $Df_i$ is strictly positive, that is, $0<m < (Df_i)(x) < 1, ~~\forall~ x \in X.$ 
Therefore, we have $$ (Df_i)(x) \le (Df_i)(y) \exp\Big(\frac{K}{m} d(x,y)^s\Big).$$
Without loss of generality we assume that $\text{diam}(A) \le 1.$ Now for a suitable constant $c_1 > 1$, we get $$ (Df_i)(x) \le c_1 (Df_i)(y)~ \forall x,y \in A.$$ On taking infimum over all $y \in A$, we have 
\begin{equation} 
\begin{aligned}
  (Df_i)(x) & \le c_1 \inf_{y \in A}(Df_i)(y)~ \forall x \in A \\ & = c_1 r_i ~ \forall x \in A.
\end{aligned}
\end{equation}  
Consequently,  $ R_i \le c_1 r_i.$

For other one, we proceed as follows
\begin{equation} 
\begin{aligned}
(Df_{ij})(x) & = (Df_i)(f_j(x)) (Df_j)(x) \\ & \le (Df_j)(x) \sup_{x \in X}(Df_i)(f_j(x)) \\ & \le R_i (Df_j)(x)
\end{aligned}
\end{equation}
Now, we apply infimum on both sides, and obtain 
\begin{equation} 
\begin{aligned}
 \inf_{x \in X}(Df_{ij})(x) & \le R_i \inf_{x \in X}(Df_j)(x)\\
  r_{ij} & \le R_i r_j  \\ & \le c_1 r_i r_j.
\end{aligned}
\end{equation}
The last inequality follows from the first part of the lemma.
To get other side of inequality, we proceed in the following way.
\begin{equation} 
\begin{aligned}
& \frac{(Df_{ij})(x)}{(Df_i)(f_j(x))} = (Df_j)(x) ~  \text{since}~(Df_j)(z)> 0 ~ \forall z \in X \\ &
\frac{(Df_{ij})(x)}{\inf_{x \in X}(Df_i)(f_j(x))} \ge (Df_j)(x)
\\ &
\frac{(Df_{ij})(x)}{r_i} \ge (Df_j)(x)
\\ &
\frac{R_{ij}}{r_i} \ge (Df_j)(x)
\end{aligned}
\end{equation}
From the first part, one gets $$ \frac{c_1 r_{ij}}{r_i} \ge (Df_j)(x) \ge r_j.$$ Finally, we have $ r_{ij} \ge \frac{r_i r_j}{c_1}.$
\end{proof}
Nussbaum et al. \cite{Nussbaum1} proved a result analogous to ``mean value theorem" in terms of infinitesimal similitudes. In particular, Lemma $4.3$ of \cite{Nussbaum1} assumed a condition that $\theta$ is Lipschitz which has not been used in the proof. Therefore, the next lemma can be seen as a modification of \cite[Lemma $4.3$]{Nussbaum1}. 
\begin{lemma}\label{new2}
Assume that $(Df_j)(x)> 0 $ for every $x \in X.$ Then there exists $c_2 \ge  c_1$ and $\delta > 0$ such that for $x,y  \in X$, $d(x,y) < \delta,$ $$ \frac{(Df_j)(x)}{c_2} \le \frac{d(f_j(x),f_j(y))}{d(x,y)} \le c_2 (Df_j)(x)$$ for every $j \in I^*.$
\end{lemma}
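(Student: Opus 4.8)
The plan is to reduce the statement for an arbitrary word $j=j_1j_2\cdots j_n\in I^n$ to the one-symbol case and then show that the per-symbol errors accumulate only to a bounded total, however long $j$ is. Write $\pi_k=f_{j_k}\circ f_{j_{k+1}}\circ\cdots\circ f_{j_n}$ for $1\le k\le n$ and $\pi_{n+1}=\mathrm{id}$, so that $\pi_1=f_j$ and $\pi_k=f_{j_k}\circ\pi_{k+1}$. Since each $f_i$ contracts by at most $c_{\max}:=\max_i c_i<1$, we have $d(\pi_{k+1}(x),\pi_{k+1}(y))\le c_{\max}^{\,n-k}\,d(x,y)$, so every intermediate pair of points sits as deep inside the "small distance" regime as we wish. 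Telescoping,
\[
\frac{d(f_j(x),f_j(y))}{d(x,y)}=\prod_{k=1}^{n}\frac{d\big(f_{j_k}(\pi_{k+1}(x)),f_{j_k}(\pi_{k+1}(y))\big)}{d\big(\pi_{k+1}(x),\pi_{k+1}(y)\big)},
\]
while Lemma \ref{Chain_lemma}, applied $n-1$ times, gives $(Df_j)(x)=\prod_{k=1}^{n}(Df_{j_k})(\pi_{k+1}(x))$ --- a product over exactly the same evaluation points. Hence it suffices to compare, factor by factor, $g_i(u,v):=d(f_i(u),f_i(v))/d(u,v)$ with $(Df_i)(u)$ for $d(u,v)$ small and $i\in I$.

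For the qualitative one-symbol estimate I would note that $0<g_i\le c_{\max}<1$ (as $f_i$ is a $c_i$-contraction) and that $g_i$ extends continuously to $X\times X$ upon setting $g_i(p,p)=(Df_i)(p)$: off the diagonal this is clear, and along any sequence $(u_n,v_n)\to(p,p)$ the bounded sequence $g_i(u_n,v_n)$ has all its subsequential limits in $(Df_i)^*(p)=\{(Df_i)(p)\}$ --- using the definition of infinitesimal similitude for indices with $u_n\ne v_n$ and Lemma \ref{conti} for those with $u_n=v_n$ --- hence converges to $(Df_i)(p)$. Since $X\times X$ is compact, $g_i$ is uniformly continuous there, and comparing $(u,v)$ with $(u,u)$ shows: for every $\varepsilon>0$ there is $\delta_\varepsilon>0$, independent of the finitely many $i\in I$, with $|g_i(u,v)-(Df_i)(u)|<\varepsilon$ whenever $d(u,v)<\delta_\varepsilon$. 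Writing $m:=\min_{i\in I}\min_{x\in X}(Df_i)(x)>0$ (positive by hypothesis, Lemma \ref{conti} and compactness), and using $0<m\le(Df_i)\le c_{\max}<1$, this additive estimate is equivalent to a two-sided multiplicative one with constant as close to $1$ as desired.

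The difficulty is that feeding a fixed multiplicative constant $\eta>1$ into the telescoped product costs a factor $\eta^{n}$, useless for long words; the bounded distortion of Lemma \ref{new1} controls the oscillation of $(Df_i)$ itself but not the discrepancy between the finite difference quotient $g_i$ and $(Df_i)$, so something more is needed. The resolution --- and, I expect, the main obstacle --- is a \emph{quantitative} one-symbol estimate: there exist $\delta_0>0$, $C>0$ and $s'>0$ with $|g_i(u,v)-(Df_i)(u)|\le C\,d(u,v)^{s'}$ for all $i\in I$ and all $u\ne v$ with $d(u,v)<\delta_0$. This is exactly where the H\"older continuity of $x\mapsto(Df_i)(x)$ enters (and is precisely the hypothesis that \cite[Lemma~4.3]{Nussbaum1} needlessly strengthened to Lipschitz): heuristically $g_i(u,v)$ is a scale-$d(u,v)$ "average" of the values of $(Df_i)$ near $u$, so it cannot deviate from $(Df_i)(u)$ by more than the oscillation of $(Df_i)$ on that scale, which is $O(d(u,v)^{s})$; in the purely metric setting one obtains this by re-running the compactness argument above against the H\"older modulus of $(Df_i)$, in the spirit of the proof of Lemma \ref{new1}.

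Granting the quantitative estimate, the conclusion is a geometric series. With $\delta\le\delta_0$ chosen small enough that $(C/m)\delta^{s'}\le\tfrac12$, the $k$-th factor of the telescoped product equals $(Df_{j_k})(\pi_{k+1}(x))\,(1+\theta_k)$ with $|\theta_k|\le (C/m)\,d(\pi_{k+1}(x),\pi_{k+1}(y))^{s'}\le (C/m)\,(c_{\max}^{\,n-k}\delta)^{s'}$, so that $\sum_{k=1}^{n}|\theta_k|\le (C/m)\sum_{t\ge 0}(c_{\max}^{\,t}\delta)^{s'}<\infty$, a bound independent of $n$ and $j$. Using $|\log(1+t)|\le 2|t|$ for $|t|\le\tfrac12$, $\prod_{k=1}^{n}(1+\theta_k)$ therefore lies in a fixed interval $[1/c_2',\,c_2']$ with $c_2'$ independent of $n$ and $j$, whence $(Df_j)(x)/c_2'\le d(f_j(x),f_j(y))/d(x,y)\le c_2'\,(Df_j)(x)$ for all $x\ne y$ with $d(x,y)<\delta$; in particular $d(f_j(x),f_j(y))>0$ since $(Df_j)(x)>0$, so each $f_j$ is locally injective. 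Taking $c_2=\max\{c_1,c_2'\}$ (so $c_2\ge c_1$) and keeping this $\delta$ completes the proof for every $j\in I^*$.
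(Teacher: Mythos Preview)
Your approach is quite different from the paper's. The paper argues directly for an arbitrary fixed word $j\in I^*$: setting $F_j(x,y)$ equal to the difference quotient off the diagonal and to $(Df_j)(x)$ on it, the ratio $G_j(x,y)=F_j(x,y)/(Df_j)(x)$ is continuous on the compact space $X\times X$ with $G_j(x,x)\equiv 1$; uniform continuity then produces, for any $c>1$, a $\delta>0$ with $c^{-1}<G_j<c$ whenever $d(x,y)<\delta$, and one finishes by taking $c_2\ge c_1$. The paper's argument does not spell out why this $\delta$ may be chosen independently of $j\in I^*$ --- precisely the uniformity you worry about and attempt to secure through telescoping to single symbols.

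That said, your own argument has a genuine gap at the step you yourself flag as ``the main obstacle'': the quantitative one-symbol estimate $|g_i(u,v)-(Df_i)(u)|\le C\,d(u,v)^{s'}$ is asserted but not proved. The heuristic that $g_i(u,v)$ is a ``scale-$d(u,v)$ average'' of the values of $(Df_i)$ near $u$ tacitly relies on a mean-value mechanism --- integrating the derivative along a path from $u$ to $v$ --- that is simply unavailable in a general compact perfect metric space. H\"older continuity of $x\mapsto(Df_i)(x)$ bounds the spatial oscillation of $(Df_i)$, which is exactly what drives Lemma~\ref{new1}; it says nothing about the \emph{rate} at which the finite difference quotient $g_i(u,v)$ approaches its diagonal value $(Df_i)(u)$, and the definition of infinitesimal similitude supplies convergence only, with no modulus. ``Re-running the compactness argument against the H\"older modulus of $(Df_i)$'' cannot manufacture such a rate: compactness gives \emph{some} modulus of continuity for $g_i$ on $X\times X$, but there is no reason it should be H\"older. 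Without this quantitative input your geometric-series bound on $\sum_k|\theta_k|$ never gets off the ground, and the argument does not close.
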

\begin{proof}
We start by defining a function $F:X \times X \to \mathbb{R}$ by 
\begin{equation}
F(x,y)= 
\begin{cases}
\frac{d(f_j(x),f_j(y))}{d(x,y)}, ~~ \text{if}~~ x \ne y \\
(Df_j)(x),~~~~~\text{if} ~~ x=y.
\end{cases}
\end{equation} 
In view of Lemma \ref{conti}, one could see that function $F$ is continuous on $X \times X.$ Further, since $(Df_j)(x) >0 ,$ define $$G(x,y):= \frac{F(x,y)}{(Df_j)(x)}.$$
Note that $G(x,x)=1$ and $G$ is continuous. Using the compactness of $X \times X,$ $G$ is uniformly continuous on $X \times X.$
Equivalently, for a given $c >1$ there exists $\epsilon > 0$ (depending on $c$) with $$ c^{-1} < G(x,y) < c $$ for every $x,y \in X \times X$ with $d(x,y) < \epsilon.$
For $c_2 \ge c_1$, this immediately gives $$ c_2^{-1} <\frac{F(x,y)}{(Df_j)(x)} < c_2.$$ Hence the proof is complete.
\end{proof}
\begin{remark}
The assumption in the above lemma can not be relaxed. Note that any H\"older continuous function $f:X \to X $ with H\"older exponent greater than $1$, by definition of infinitesimal similitude, we get $ (Df)(x)=0~\forall x \in X.$
\end{remark}
\begin{theorem}
Let $\{X; f_1, f_2,\dots,f_N\}$ be an IFS consisting of contracting infinitesimal similitudes such that $(Df_i)(x) > 0 $ for every $x \in X$ and $i=1,2, \dots,N.$ Let $A$ be the associated attractor of the IFS, then $$\dim_H(A)= \underline{\dim}_B(A)=\overline{\dim}_B(A).$$
\end{theorem}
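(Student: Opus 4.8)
The plan is to prove the two inequalities $\overline{\dim}_B(A) \le \dim_H(A)$ and $\dim_H(A) \le \underline{\dim}_B(A)$, since the reverse chain $\dim_H(A) \le \underline{\dim}_B(A) \le \overline{\dim}_B(A)$ holds for any bounded set. By Theorem \ref{mainthm} (using that the SSC is a special case of SOSC, or more directly by appealing to the characterization $\dim_H(A)=\sigma_0$ already available when the pieces are disjoint and applying the same reduction as in the proof of Theorem \ref{mainthm}), it suffices to show $\overline{\dim}_B(A) \le \sigma_0$, where $\sigma_0$ is the unique number with $r(L_{\sigma_0}) = 1$. Thus the whole theorem reduces to an upper box-counting estimate.

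First I would set up the covering. Fix the strictly positive eigenvector $h_0$ with $L_{\sigma_0} h_0 = h_0$, and recall from the displayed computation in the proof of Theorem \ref{mainthm} (Equation \ref{Eqn22}) that $\sum_{j \in I^n} \big((Df_j)(x)\big)^{\sigma_0}$ is bounded above and below by constants times $1$, uniformly in $x$ and $n$; equivalently $\sum_{j \in I^n} r_j^{\sigma_0} \asymp 1$ by Lemma \ref{new1}. Next, using Lemma \ref{new2}, for $j \in I^n$ with $n$ large enough that all pieces $A_j = f_j(A)$ have diameter below the threshold $\delta$, we get $|A_j| \le c_2\, R_j\, |A| \le c_1 c_2\, r_j\, |A|$. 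So $\{A_j : j \in I^n\}$ is a cover of $A$ by $N^n$ sets, but with wildly varying diameters; I would instead use a \emph{stopping-time} (cut-set) family: for $\epsilon > 0$ small, let $\Lambda_\epsilon$ be the antichain of finite words $j = j_1\cdots j_m$ with $r_{j_1\cdots j_{m-1}} > \epsilon \ge r_j$. Because $r_i \le c_{\max} < 1$ for each single letter and $r_{j} \asymp r_{j_1}\cdots r_{j_m}$ by Lemma \ref{new1}, each such $j$ satisfies $\epsilon\, c_{\min}/c_1 \le r_j \le \epsilon$, hence $|A_j| \le c_1 c_2 |A|\, \epsilon$, a uniform diameter of order $\epsilon$.

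Then I would bound the cardinality of $\Lambda_\epsilon$. Since the $r_j$ with $j \in \Lambda_\epsilon$ are all comparable to $\epsilon$, we have
\begin{equation*}
\#\Lambda_\epsilon \cdot (\epsilon\, c_{\min}/c_1)^{\sigma_0} \le \sum_{j \in \Lambda_\epsilon} r_j^{\sigma_0},
\end{equation*}
and the right-hand sum is $\asymp 1$: this follows because $\Lambda_\epsilon$ is a cut-set, so repeatedly applying the one-step relation $\sum_{i=1}^N r_{wi}^{\sigma_0} \asymp r_w^{\sigma_0}$ (a consequence of $\sum_i r_i^{\sigma_0}(Df_i)$-type identities packaged in the remark after Theorem \ref{mainthm}, or directly from Equation \ref{Eqn22} applied along the tree) telescopes the sum over $\Lambda_\epsilon$ down to the sum over the empty word, which is $1$. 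Hence $\#\Lambda_\epsilon \le C\, \epsilon^{-\sigma_0}$. Since $\{A_j : j \in \Lambda_\epsilon\}$ covers $A$ with each diameter at most a fixed constant times $\epsilon$, we get $N_{C'\epsilon}(A) \le C\epsilon^{-\sigma_0}$, and therefore $\overline{\dim}_B(A) = \varlimsup_{\delta \to 0} \frac{\log N_\delta(A)}{-\log \delta} \le \sigma_0$. Combining with $\dim_H(A) = \sigma_0 \le \underline{\dim}_B(A)$ finishes the proof.

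The main obstacle is the bookkeeping with the constants $c_1, c_2$ and the bounded-distortion estimate $\sum_{j \in \Lambda_\epsilon} r_j^{\sigma_0} \asymp 1$ over a cut-set rather than over the uniform layer $I^n$: one must check that the telescoping argument from Equation \ref{Eqn22} is valid along an antichain, which requires that every infinite word passes through $\Lambda_\epsilon$ exactly once (automatic, since $r_{j_1\cdots j_m} \to 0$) and that the one-step comparison constant does not degrade when iterated along branches of unequal length — this is exactly what Lemma \ref{new1} guarantees with a single constant $c_1$ valid for all words. A secondary point to handle carefully is that $\dim_H(A) = \sigma_0$ under SOSC is Theorem \ref{mainthm}, but here no separation hypothesis is assumed at all; so strictly we only get $\dim_H(A) \le \sigma_0$ directly is \emph{false} in general — rather, we always have $\dim_H(A) \le \overline{\dim}_B(A) \le \sigma_0$ from the covering argument, and the reverse inequality $\dim_H(A) \ge \sigma_0$ is what genuinely needs separation; I would therefore state this theorem under the same SOSC (or at least OSC) hypothesis as Theorem \ref{mainthm}, or note that without separation the common value of the three dimensions is the one given by the covering bound. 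Resolving that hypothesis issue cleanly is the part I would think hardest about before writing the final version.
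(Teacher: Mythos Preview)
Your covering/cut-set argument for $\overline{\dim}_B(A)\le\sigma_0$ is fine, but the overall strategy has a genuine gap that you yourself spot at the end: the theorem carries \emph{no} separation hypothesis, so you cannot close the loop via $\dim_H(A)=\sigma_0$. Your chain gives only $\dim_H(A)\le\underline{\dim}_B(A)\le\overline{\dim}_B(A)\le\sigma_0$, which says nothing about equality of the three dimensions unless you also know $\dim_H(A)\ge\sigma_0$, and that inequality is exactly what fails without SOSC (overlapping systems can have $\dim_H(A)<\sigma_0$). Your proposed fix --- add SOSC as a hypothesis --- changes the theorem rather than proves it; the other suggestion, that ``the common value is the covering bound'', is circular since the covering bound is $\sigma_0$, not the unknown $\dim_H(A)$.

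The paper avoids $\sigma_0$ entirely. From Lemmas~\ref{new1} and~\ref{new2} one gets a uniform bounded-distortion estimate $c^{-1}r_i\le d(f_i(x),f_i(y))/d(x,y)\le c\,r_i$ for all words $i\in I^*$ and all nearby $x,y$. Then, for any $z\in A$ and any small $r>0$, one picks an address $i\in I^\infty$ with $\pi(i)=z$ and stops at the level $n$ where $r_{i|_n}\asymp r$; the map $f_{i|_n}$ restricted to $A$ is then a bi-Lipschitz embedding of $A$ into $A\cap B(z,Cr)$ with distortion constants independent of $z$ and $r$. This is precisely the quasi-self-similarity hypothesis of Falconer \cite[Theorem~3]{Fal3}, and that theorem yields $\dim_H(A)=\underline{\dim}_B(A)=\overline{\dim}_B(A)$ directly, with no separation assumption and without ever naming the common value. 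The moral: for equality of Hausdorff and box dimensions, argue geometrically (quasi-self-similarity) rather than spectrally (through $\sigma_0$).
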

\begin{proof}
By Lemmas \ref{new1} and \ref{new2}, there is a constant $c>1$ and $\delta> 0$ such that
\begin{equation}\label{HBE}
\begin{aligned}
c^{-1} r_i \le \frac{d(f_i(x),f_i(y))}{d(x,y)} \le c r_i
\end{aligned}
\end{equation}
holds for every $x,y \in A$ with $d(x,y) < \delta$, and $i \in I^*.$
From the very definition of attractor, for each $z \in A$ there exists $i \in I^{\infty}$ such that $\lim_{n \to \infty} f_{i_1}\circ f_{i_2}\circ \dots \circ f_{i_n}(A) = z,$ where limit is taken with respect to the Hausdorff metric induced by the metric $d$. Therefore, for each $0< r< \frac{\delta}{c},$ we choose a least natural number $n=n(r)$ such that $$\min_{1 \le i\le N}r_i \frac{\delta}{c} \le c r r_{i|_n} < \delta  .$$
In view of Equation \ref{HBE}, we have $f_i(x),f_i(y) \in B_{\delta}(z)$ whenever $x,y \in B_r(z).$ Hence, we get $$ \frac{\min_{1 \le i\le N}r_i \delta}{c^2 r}d(x,y) \le d(f_i(x),f_i(y)) \le \frac{\delta}{c} d(x,y).$$
On defining a mapping $\Phi=f_{i_n}|_{B_r(z) \cap A}$, \cite[Theorem $3$]{Fal3} completes the proof.
\end{proof}

\begin{lemma}\label{new3}
There exists $c_3 \ge c_2 $ such that for any $x, y \in X$ we have $$ d(f_j(x),f_j(y)) \le c_3 r_j d(x,y)~~ j \in I^*.$$
\end{lemma}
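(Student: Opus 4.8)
The plan is to bootstrap from Lemma \ref{new2}, which already gives the desired estimate $d(f_j(x),f_j(y)) \le c_2 (Df_j)(x)\, d(x,y) \le c_2 R_j\, d(x,y)$, but only for points $x,y$ with $d(x,y) < \delta$. Combining this with the first part of Lemma \ref{new1} (namely $R_j \le c_1 r_j$) already handles the small-scale case: for $d(x,y)<\delta$ one gets $d(f_j(x),f_j(y)) \le c_1 c_2 r_j\, d(x,y)$. So the whole content of the lemma is to remove the restriction $d(x,y)<\delta$ and upgrade to \emph{all} $x,y\in X$, at the cost of enlarging the constant.

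The key step is a chaining argument at the scale $\delta$. Given arbitrary $x,y\in X$ with $d(x,y)\ge \delta$, I would pick a chain $x=z_0, z_1,\dots,z_M=y$ along a near-geodesic or simply any finite $\delta/2$-dense path — here I need to be slightly careful, since a general compact metric space need not be geodesic, so instead I would use the standard device: cover the compact set $A$ (or $X$, assumed compact) by finitely many balls of radius $\delta/2$ and pass through the fact that its diameter $\mathrm{diam}(X)$ is finite, so that $M \le \lceil 2\,\mathrm{diam}(X)/\delta \rceil =: M_0$ uniformly. Applying Lemma \ref{new2} to each consecutive pair $z_{\ell-1}, z_\ell$ (which are at distance $<\delta$) and summing via the triangle inequality gives
\begin{equation*}
d(f_j(x),f_j(y)) \le \sum_{\ell=1}^{M} d(f_j(z_{\ell-1}),f_j(z_\ell)) \le c_2 \sum_{\ell=1}^{M} R_j\, d(z_{\ell-1},z_\ell) \le c_2 R_j \cdot M_0 \cdot \delta,
\end{equation*}
while $d(x,y) \ge \delta$, so $d(f_j(x),f_j(y)) \le c_2 M_0 R_j\, d(x,y) \le c_1 c_2 M_0\, r_j\, d(x,y)$. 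Taking $c_3 := \max\{c_2,\ c_1 c_2 M_0\}$ and noting $c_3 \ge c_2$ covers both regimes simultaneously. One subtlety: the chain argument requires the intermediate points to be chosen so that consecutive distances are genuinely below $\delta$; in a general metric space I can only guarantee a chain whose consecutive gaps are small if the space is, say, connected or if I invoke a covering/Lebesgue-number argument on the compact space — so I would state the lemma for the compact $X$ of Theorem \ref{mainthm} and run the covering argument there, or alternatively observe that the estimate only needs to hold on $A$ (since that is where it is used), and $A$ inherits enough structure from being the attractor.

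The main obstacle I anticipate is precisely this passage from local to global on a space that is not assumed geodesic: the clean "sum over a geodesic subdivision" has to be replaced by a compactness argument producing a uniform bound $M_0$ on the number of $\delta$-steps needed. Once that bound is in hand the rest is a one-line triangle-inequality estimate, and the constant $c_3$ is completely explicit in terms of $c_1, c_2, \delta$ and $\mathrm{diam}(X)$. I would also double-check the edge case $d(x,y)=0$ (trivial) and $\delta/c_2 \le d(x,y) < \delta$ (already covered by Lemma \ref{new2} directly), so that the final constant $c_3$ works verbatim for every pair in $X$.
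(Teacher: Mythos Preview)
Your proposal is correct and follows essentially the same chaining approach as the paper: cover the compact space by finitely many $\delta$-balls, string a finite chain of intermediate points between $x$ and $y$ with consecutive gaps below $\delta$, apply Lemma~\ref{new2} on each link and sum via the triangle inequality, then convert $R_j$ to $r_j$ using Lemma~\ref{new1}. Your concern about needing connectedness (or an equivalent structural hypothesis) to guarantee such a chain is well-founded---the paper's proof simply invokes ``$X$ is compact and connected'' at exactly this point and arrives at the constant $c_3 = 2n c_1 c_2$, where $n$ is the number of covering balls.
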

\begin{proof}
Since $X$ is compact and connected, with the same $\delta $ as in Lemma \ref{new2} we have $$X \subseteq \cup_{k=1}^n B(x_k,\delta).$$
Now let $x,y \in X$ then using Lemmas \ref{new1} and \ref{new2} we get 
\begin{equation}
\begin{aligned}
d(f_j(x),f_j(y)) & \le d(f_j(x),f_j(x_1))+d(f_j(x_1),f_j(y_1))+\dots + d(f_j(y_n),f_j(y)) \\  & \le c_2 R_j (d(x,x_1)+d(x_1,y_1)+ \dots +d(y_n,y))\\ & \le 2n c_2 R_j d(x,y) \\ & \le 2nc_1 c_2 r_j d(x,y).
\end{aligned}
\end{equation}
With $c_3= 2nc_1c_2$ we have the required result.
\end{proof}

We may assume without loss of generality that $ \overline{X^o}=X$ and $B(A, \delta) \subset X^o.$ We let $ 0 < \epsilon < 2^{-1}c_3^{-1} \delta$, then $2 c_3 \epsilon \le \delta.$ We have
\begin{equation}\label{eqn2.7}
\begin{aligned} 
 B(A,c_3 \epsilon) \subset X.
 \end{aligned}
 \end{equation}
For $ \text{j} \in I^*$, we write $O_j:= f_j (B(A,\epsilon)).$
\begin{remark}
By Equation \ref{eqn2.7} and Lemma \ref{new2} we have for every $x \in A$ 
\begin{equation}\label{eqn2.8}
\begin{aligned}
B(f_j(x),c_2^{-1}\epsilon r_j) \subseteq f_j(B(x,\epsilon)) \subseteq B(f_j(x),c_2\epsilon r_j) .
\end{aligned}
\end{equation}
Hence we have 
\begin{equation}\label{eqn2.9}
\begin{aligned}
B(A_j,c_2^{-1}\epsilon r_j) & = \cup_{x \in A}B(f_j(x),c_2^{-1}\epsilon r_j) \\ & \subseteq \cup_{x \in A} f_j(B(x,\epsilon)) \\ & =f_j(B(A,\epsilon)) \\ & =O_j \\ & \subseteq \cup_{x \in A} B(f_j(x),c_2\epsilon r_j)\\ & = B(A_j,c_2\epsilon r_j ) .
\end{aligned}
\end{equation}
\end{remark}
Motivated by \cite{Ye}, we introduce some terminologies which will be used to develop more results of dimension theory in complete metric spaces.  
For $ 0 <b < 1$ we define $$ \Gamma_b=\{j=j_1j_2\dots j_n: r_{j_1j_2\dots j_n}< b \le r_{j_1j_2\dots j_{n-1}} \}.$$
Following an inductive way, see, for instance, \cite{Ye}, of defining the index set $\Gamma(j), j \in I^*:$
For $j \in I=\{1,2,\dots , N\}$, we define $$ \Gamma(j)=\{i \in \Gamma_{| O_j|}: A_i \cap O_j \ne  \emptyset \}.$$
Suppose $\Gamma(j)$ for $j \in I^*$ is defined, we define for $1 \le k \le N$, $$ \Gamma(kj)= \mathcal{B} \cup \mathcal{C} $$
where $$\mathcal{B}=\{ki:i \in \Gamma(j)\}$$ and $$\mathcal{C}=\{i \in \Gamma_{|O_{kj}|}: i_1 \ne k ~\text{and}~~ A_i \cap O_{kj}\}.$$
One could observe that each $i \in \Gamma(j)$ is of either type $\mathcal{B}$ or $\mathcal{C}$, $A_i \cap O_j \ne \emptyset;$ also $A_i$ and $A_j$ are comparable in size by following proposition.
\begin{proposition}

There exists $c> 0$ such that $$ c^{-1} \le \frac{r_j}{r_i} \le c$$ for all $i \in \Gamma(j), j \in I^*.$
\end{proposition}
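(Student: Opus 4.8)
The plan is to reduce the claim to the quasi-multiplicativity estimate $\frac{r_ar_b}{c_1}\le r_{ab}\le c_1 r_a r_b$ of Lemma \ref{new1}, applied only a \emph{bounded} number of times. The first ingredient I would record is that $|O_j|$ is comparable to $r_j$ uniformly in $j\in I^*$: there is a constant $c_O\ge 1$, depending only on the IFS and on $\epsilon$, with $c_O^{-1}r_j\le|O_j|\le c_O\,r_j$. The upper bound is immediate from Lemma \ref{new3}, since $O_j=f_j(B(A,\epsilon))$ gives $|O_j|\le c_3 r_j\,|B(A,\epsilon)|\le c_3(1+2\epsilon)r_j$ (using $\mathrm{diam}(A)\le 1$). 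For the lower bound I would fix, once and for all, a pair $z_1\ne z_2$ in the non-empty open set $B(A,\epsilon)$ with $d(z_1,z_2)<\delta$ (possible because $X$ is perfect), and apply Lemma \ref{new2} to obtain $|O_j|\ge d(f_j(z_1),f_j(z_2))\ge c_2^{-1}d(z_1,z_2)\,r_j$. The inclusions in Equation \ref{eqn2.9} carry the same information.

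Next I would unwind the recursive definition of $\Gamma(j)$ into a normal form. Writing $j=k_1k_2\cdots k_n$, I claim that every $i\in\Gamma(j)$ has the shape $i=k_1k_2\cdots k_p\,w$ for some $0\le p\le n-1$, with $w\in\Gamma_{|O_{k_{p+1}\cdots k_n}|}$ (convention: $k_1\cdots k_0$ is the empty word, $r_\emptyset=1$). This is a straightforward induction on $n$. For $n=1$ it is exactly the definition $\Gamma(k_1)=\{w\in\Gamma_{|O_{k_1}|}:A_w\cap O_{k_1}\ne\emptyset\}$, with $p=0$. In the inductive step, with $j=k\cdot j'$, an element of the part $\mathcal C$ of $\Gamma(j)$ already has this form with $p=0$, while an element $k\,i''$ of the part $\mathcal B$ inherits it from $i''\in\Gamma(j')$ by prepending the letter $k$. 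For the present statement only the membership $w\in\Gamma_{|O_{\,\cdot\,}|}$ is used; the intersection conditions $A_w\cap O_{\,\cdot\,}\ne\emptyset$ play no role here.

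The estimate then assembles in three moves. First, a word $w=w_1\cdots w_q\in\Gamma_b$ satisfies $r_w<b$ by definition, and, applying Lemma \ref{new1} to the blocks $w_1\cdots w_{q-1}$ and $w_q$, also $r_w\ge c_1^{-1}\big(\min_{1\le\ell\le N}r_\ell\big)\,b$; taking $b=|O_{k_{p+1}\cdots k_n}|$ and inserting the size estimate of the first paragraph gives $r_w\asymp r_{k_{p+1}\cdots k_n}$ with a uniform constant. Second, Lemma \ref{new1} applied to the blocks $k_1\cdots k_p$ and $w$ gives $r_i\asymp r_{k_1\cdots k_p}\,r_w$. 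Third, Lemma \ref{new1} applied to the blocks $k_1\cdots k_p$ and $k_{p+1}\cdots k_n$ gives $r_{k_1\cdots k_p}\,r_{k_{p+1}\cdots k_n}\asymp r_{k_1\cdots k_n}=r_j$. Chaining these, $r_i\asymp r_j$, and multiplying out the (at most three) constants involved produces a single $c>0$, independent of $i$ and $j$, with $c^{-1}\le r_j/r_i\le c$. When $p=0$ the first move alone already gives $r_i=r_w\asymp r_j$.

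The only point I expect to require care is precisely this bookkeeping: a naive induction directly on the recursion $\Gamma(kj)=\mathcal B\cup\mathcal C$ would invoke Lemma \ref{new1} once per letter of $j$ and hence accumulate a factor $c_1$ at every level, producing a constant that degrades as the length of $j$ grows. The normal-form step of the second paragraph is exactly what prevents this: it presents $i$ as a \emph{single} concatenation — a prefix of $j$ followed by one $\Gamma_b$-word whose size is pinned by the complementary suffix of $j$ — so that quasi-multiplicativity is invoked only boundedly often. The remaining manipulations are routine.
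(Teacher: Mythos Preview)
Your proof is correct and follows essentially the same route as the paper: the paper splits into the cases $i_1\ne j_1$ (where $i\in\Gamma_{|O_j|}$ directly) and $i_1=j_1$ (where one strips the maximal common prefix $j_1\cdots j_k$, observes inductively that the tail $i'$ lies in $\Gamma(j')$, and reduces to the first case), which is exactly your normal form with $p=0$ and $p\ge 1$ respectively. The estimates invoked---the comparison $|O_j|\asymp r_j$, the two-sided bound for words in $\Gamma_b$, and the quasi-multiplicativity of Lemma~\ref{new1} applied a bounded number of times---are the same in both arguments.
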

\begin{proof}
For $i \in \Gamma(j), j \in I^*$, we consider the two cases:
\begin{itemize}
\item[Case(1).] If $i_1 \ne j_1$, then using the construction of $\mathcal{C}$ we see that $i \in \Gamma_{| O_j|}.$ Furthermore, with $r= \min\{r_k:k=1,2,\dots N\}$ we have 
\begin{equation}\label{eqn2.10}
\begin{aligned} 
r_i \le   |O_j| \le r_{i_1 i_2 \dots i_{n-1}} \le c_1 r^{-1} r_i.
\end{aligned}
 \end{equation}
Since $\epsilon < 2^{-1}c_3^{-1} \delta < \delta $, from Lemma \ref{new2} we get $ |O_j| \ge c_2^{-1} \epsilon r_j.$
Therefore we have 
\begin{equation}\label{eqn2.11}
\begin{aligned}
 c_2^{-1} \epsilon r_j \le |O_j | \le c_1 r^{-1}r_i.
 \end{aligned}
 \end{equation}
 From Equation \ref{eqn2.9}, we have $ | O_j| \le 2 c_2 \epsilon r_j +|A_j|.$ Lemma \ref{new3} and Equation \ref{eqn2.10} produce 
  \begin{equation}\label{eqn2.12}
  \begin{aligned}
  r_i \le |O_j| \le c_3(2 \epsilon +|A|)r_j.
  \end{aligned}
   \end{equation}
   Now Equation \ref{eqn2.11} and \ref{eqn2.12} yield that there exists $c > 0$ with $$ c^{-1} \le \frac{r_j}{r_i} \le c.$$
\item[Case(2).] If $i_1 = j_1$, note that $$j=j_1j_2 \dots j_kj_{k+1} \dots j_n := j_1j_2 \dots j_k j'$$
and $$i=j_1j_2 \dots j_k i_{k+1} \dots i_m := j_1j_2 \dots j_k i'$$
where $j_{k+1} \ne i_{k+1}.$ The construction of $\mathcal{B}$ gives inductively that $i' \in \Gamma(j')$ and from Case(1) we have $c^{-1} \le \frac{r_j}{r_i} \le c$. This with Lemma \ref{new1} dictates that $$(c c_1^2)^{-1} \le \frac{r_j}{r_i} \le c c_1^2. $$
With the choice $c'=  c c_1^2$ this finishes the proof.
\end{itemize}
\end{proof}
We define $ \gamma_{\epsilon} := \sup_{j \in I^*} \Gamma(j).$
\begin{theorem}
If $\gamma_{\epsilon} < \infty$ for some $\epsilon >0$ the Strong open set condition is satisfied.
\end{theorem}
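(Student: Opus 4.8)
The plan is to produce, from the hypothesis $\gamma_\epsilon<\infty$, an open set realising the strong open set condition, following the Bandt--Graf/Schief philosophy: pass through a combinatorial ``bounded neighbour'' estimate to a single well-placed ball whose orbit under the semigroup generated by $f_1,\dots,f_N$ serves as the open set, keeping the (only H\"older-controlled) distortion of the infinitesimal similitudes under control by Lemmas \ref{new1}, \ref{new2}, \ref{new3} and using the injectivity of the $f_i$ on $A$ to rescale. Concretely, put $M:=\gamma_\epsilon$. By the Proposition above, each $i\in\Gamma(j)$ obeys $c^{-1}\le r_j/r_i\le c$ and $A_i\cap O_j\ne\emptyset$; conversely, the definitions of $\mathcal B$ and $\mathcal C$ ensure that any $i\in\Gamma_{|O_j|}$ with $A_i\cap O_j\ne\emptyset$ is either in $\Gamma(j)$ or linked to a member of $\Gamma(j)$ through a common ancestor at scale $|O_j|$, so with Lemma \ref{new1} the number of such $i$ is bounded by a constant $M'=M'(M,c,c_1)$. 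Together with \eqref{eqn2.9} this says: for every $b\in(0,1)$ the cover $\{O_j:j\in\Gamma_b\}$ of $A$ has multiplicity at most $M'$, uniformly in $b$.

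Next I would extract a scale-invariant gap. Suppose, toward a contradiction, that for every $j\in I^*$ the piece $A_j$ were covered by the sets $O_i$ with $i\in\Gamma_{|O_j|}$, $i\ne j$, $A_i\cap O_j\ne\emptyset$. Feeding this into the inductive rule $\Gamma(kj)=\mathcal B\cup\mathcal C$ would force a new type-$\mathcal C$ index at essentially every prepending, so $\#\Gamma(j_1\cdots j_n)\to\infty$ as $n\to\infty$, contradicting $\#\Gamma(\cdot)\le M$. Hence some piece $A_k$ has a point $x\in A_k$ admitting a small ball $B(x,t)$ disjoint from every $A_i$, $i\in\Gamma_{|O_k|}$, $i\ne k$. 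Iterating this inside $A_k$ (each $f_w$ carries the configuration at $A_k$ to that at $A_{kw}$, with sizes controlled by Lemmas \ref{new1} and \ref{new2}) and rescaling by the maps $f_{kw}^{-1}$, which are bi-Lipschitz on small balls by Lemma \ref{new2} since the $f_i$ are one-to-one on $A$, yields an open ball $B=B(y,\rho)$ with $B\cap A\ne\emptyset$ such that $f_u(B)\cap f_v(B)=\emptyset$ for all incomparable $u,v\in I^*$ (neither being an extension of the other) and such that each $f_u(B)$ contains the open ball $B(f_u(y),c_2^{-1}\rho\,r_u)\ni f_u(y)\in A$.

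With such a $B$ in hand I would set $U:=\mathrm{int}\big(\bigcup_{u\in I^*}f_u(\overline B)\big)$. It is open and nonempty, and $U\cap A\ne\emptyset$ because $f_u(\overline B)\supseteq B(f_u(y),c_2^{-1}\rho r_u)$ with $f_u(y)\in A$. Forward invariance $\bigcup_{i\in I}f_i(U)\subseteq U$ follows from $f_i\big(\bigcup_{u}f_u(\overline B)\big)=\bigcup_u f_{iu}(\overline B)\subseteq\bigcup_{w\in I^*}f_w(\overline B)$ on passing to interiors. Finally, for $i\ne j$ in $I$ every word $iu$ is incomparable to every word $jv$, so $f_{iu}(\overline B)\cap f_{jv}(\overline B)=\emptyset$ for all $u,v\in I^*$, whence $f_i(U)\cap f_j(U)=\emptyset$. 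Thus $U$ witnesses the strong open set condition.

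The hard part is the gap step. Two points need care: first, the ``blow-up of $\#\Gamma$'' argument must be made quantitative enough to land a genuine ball rather than merely a sequence of shrinking near-gaps; second, the self-similar bootstrapping of the gap must survive the non-conformal distortion of the $f_i$ as the ball is pushed through all of $I^*$. It is precisely here that the uniform comparability $R_i\le c_1 r_i$, the bi-Lipschitz estimate on small balls, and the bound $d(f_j(x),f_j(y))\le c_3 r_j\,d(x,y)$ of Lemmas \ref{new1}--\ref{new3} are indispensable, as is the one-to-one hypothesis, which alone licenses the rescalings $f_{kw}^{-1}$; once the separated ball $B$ is produced, the construction and verification of $U$ are routine.
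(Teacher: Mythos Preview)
Your plan follows the Bandt--Graf/Schief philosophy, but you miss the one combinatorial trick that makes the argument go through cleanly, and are therefore pushed into a bootstrap you do not actually carry out. The paper's proof avoids the bootstrap entirely: since $\gamma_\epsilon=\sup_{j\in I^*}\#\Gamma(j)<\infty$, the supremum is \emph{attained} at some $j^*\in I^*$. For any $k\in I$ the $\mathcal B$-part of $\Gamma(kj^*)$ already has $\#\Gamma(j^*)=\gamma_\epsilon$ elements, so maximality forces the $\mathcal C$-part of $\Gamma(kj^*)$ to be empty; inductively, $\Gamma(ij^*)=\{il:l\in\Gamma(j^*)\}$ for \emph{every} $i\in I^*$. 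By the definition of $\mathcal C$ this means $A_l\cap O_{ij^*}=\emptyset$ for every $l\in\Gamma_{|O_{ij^*}|}$ with $l_1\ne i_1$, and together with \eqref{eqn2.9} one obtains the uniform gap $D(A_k,A_{ij^*})\ge c_2^{-1}\epsilon\,r_{ij^*}$ for all $k\ne i_1$ and all $i\in I^*$ at once. The open set is then simply $U=\bigcup_{i\in I^*} f_{ij^*}\!\big(B(A,\tfrac{\epsilon}{2c_2^{2}})\big)$, and $f_m(U)\cap f_n(U)=\emptyset$ for $m\ne n$ follows directly from the distance estimate via the triangle inequality. No inverse maps, no rescaling, no induction on scales.

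Against this, your route has two genuine gaps. First, the ``blow-up of $\#\Gamma$'' contradiction is only sketched; as stated it yields at best one piece $A_k$ with a gap at one scale, and you give no mechanism for upgrading this to the claimed ball $B$ with $f_u(B)\cap f_v(B)=\emptyset$ for \emph{all} incomparable $u,v\in I^*$. Second, your proposed upgrading uses the inverses $f_{kw}^{-1}$, but the standing hypothesis is that each $f_i$ is one-to-one on $A$ only, not on open neighbourhoods of $A$; Lemma \ref{new2} gives injectivity on individual $\delta$-balls, not on tubes around $A$, so $f_{kw}^{-1}$ need not be defined on your ball $B$. The maximizer trick sidesteps both issues: by fixing the tail $j^*$ and letting only the prefix $i$ vary, the separation is established scale by scale from the emptiness of $\mathcal C$, with no appeal to inverse maps at all.
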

\begin{proof}
Let $j^* \in I^*$ with $ \gamma_{\epsilon}= \Gamma(j^*).$ We first prove that 
\begin{equation}\label{eqn2.13}
  \begin{aligned}
  \Gamma(ij^*)=\{ij:j \in \Gamma(j^*)\}
\end{aligned}
 \end{equation}
    for any $i \in I^*.$
We can see that from the construction of the set $\mathcal{B}$ that $$ \{kj: j \in \Gamma(j^*)\} \subseteq \Gamma(kj^*) $$ for $k = 1,2, \dots , N$. On the other hand, for any $i \in I^*$ the choice of $j^*$ implies that $$\{kj: j \in \Gamma(j^*)\}= \gamma_{\epsilon}.$$
Thus the maximum of $\gamma_{\epsilon}$ forces that $ \Gamma(ij^*)= \gamma_{\epsilon}.$ From above we have $$\Gamma(ij^*)=\{ij:j \in \Gamma(j^*)\} ,~~ \forall ~ i \in I^*.$$
Now for (fixed) $1 \le k \le N$ and $i=i_1i_2\dots i_n \in I^*$ with $i_1 \ne k$, we consider the family $$ \mathcal{A}_k =\{A_l: l \in \Gamma_{| O_{ij^*}|} ~ \text{and}~ l_1 =k\}$$ where $l_1$ is the first index of $l.$ We can check that $\mathcal{A}_k$ is a cover of $A_k.$ Since $i_1 \ne l_1$, Equation \ref{eqn2.13} yields that $l \notin \Gamma(ij^*).$ Now the definition of $\mathcal{C}$ gives $ A_l \cap O_{ij^*} = \emptyset.$ At the same time from Equation \ref{eqn2.9}, one gets $ D(A_l,A_{ij^*}) \ge c_2^{-1} \epsilon r_{ij^*},$ this further produces
\begin{equation}\label{eqn2.14}
  \begin{aligned}
    D(A_k,A_{ij^*}) \ge c_2^{-1} \epsilon r_{ij^*}~~ \text{for}~~ k \ne i_1.
\end{aligned}
 \end{equation}
 Now we define a set $O_j^*=f_i(B(A, 2^{-1}c_2^{-1}\epsilon)).$ In the light of Equation \ref{eqn2.8} we have that $O_j^*$ is an open set. Furthermore we define $U:= \cup_{i \in I^*} O_{ij^*}^*.$ It is clear that $U$ is an open set and $U \cap A \ne \emptyset$. We believe that the set $U$ satisfies the condition of the SOSC. For this, let $1 \le m \le N$, we have 
 \begin{equation}
   \begin{aligned}
   f_m(U) & = \cup_{i \in I^*} f_m(O_j^*)\\ & = \cup_{i \in I^*} O_{mij^*}^*\\ & \subseteq U.
   \end{aligned}
    \end{equation}
It remains to show that $f_m(U) \cap f_n(U) = \emptyset $ for $m \ne n.$
Suppose the above does not hold for some $m \ne n. $ Then there exist $i,j$ satisfying the following condition $$ O_{mij^*}^* \cap O_{njj^*}^* \ne \emptyset .$$ Without loss of generality we can assume that $r_{mij^*} \ge r_{njj^*} .$ If $x \in O_{mij^*}^* \cap O_{njj^*}^* $ then there exist $y \in A_{mij^*}$ and $z \in A_{njj^*}$ such that $$ d(x,y) < c_2 \frac{1}{2 c_2^2}\epsilon r_{mij^*} \le \frac{c_2^{-1}\epsilon}{2} r_{mij^*}$$ and $$ d(x,z) < c_2 \frac{1}{2 c_2^2}\epsilon r_{njj^*} \le \frac{c_2^{-1}\epsilon}{2} r_{mij^*}.$$
Thanks to triangle inequality, we have $d(y,z) < c_2^{-1}\epsilon r_{mij^*}.$ Therefore, $ D(A_{mij^*},A_n) < c_2^{-1}\epsilon r_{mij^*},$ which contradicts the Equation \ref{eqn2.14}. This completes the proof.
\end{proof}
For the next note, the reader is encouraged to see \cite[Proposition $1.2$]{Lau}.
\begin{note}
Let $L_{\sigma}$ be the operator given before. We define a function $$\Phi_i(x)=\frac{(Df_i)(f_i(x)) ~~ u_{\sigma}(f_i(x))}{r(L_{\sigma}) u_{\sigma}(x)} ,$$ where $u_{\sigma}$ is the strictly positive eigenfunction of $L_{\sigma}$ corresponding to the spectral radius $r(L_{\sigma}).$ Then an operator $\tilde{L}_{\sigma}: \mathcal{C}(X) \rightarrow \mathcal{C}(X)$ defined by $$ (\tilde{L}_{\sigma}g)(x)= \sum_{i=1}^{N}\Phi_i(x) g(f_i(x))$$ satisfies the following:
\begin{itemize}
\item It can also be defined by $\tilde{L}_{\sigma}g= \frac{1}{r(L_{\sigma}) u_{\sigma}} L_{\sigma}g.$
\item $\tilde{L}_{\sigma}(1)=1.$ In other words, we have $  \sum_{i=1}^{N}\Phi_i(x) =1$ for every $x \in A.$
\end{itemize}
 
\end{note}
In view of the aforementioned note, the proof of the next lemma can be straightforwardly seen as a modification of \cite[Lemma $2.5$]{Lau}, hence omitted.
\begin{lemma}
Suppose the IFS satisfies the SOSC with an open set $U$ and $\mu$ is the generated invariant measure. Then one of the following holds.
\begin{itemize}
\item $\mu (U) \ne 0.$
\item $\mu (\partial U) \ne 0,$ where $\partial U $ denotes the boundary of $U$.
\end{itemize}
\end{lemma}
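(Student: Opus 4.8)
The plan is to reduce the dichotomy to two simple facts: that the attractor $A$ is contained in the closure $\overline{U}$, and that the generated invariant measure $\mu$ is a nonzero measure carried by $A$, so that $\mu(A)>0$.

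First I would make precise the role of $\mu$. By the Note preceding this lemma, the normalised operator $\tilde{L}_{\sigma}$ is Markov, that is, $\tilde{L}_{\sigma}1=1$ on $A$, so its adjoint admits a fixed Borel probability measure $\mu$; the standard fixed-point argument for the map sending a probability measure $\nu$ to $\sum_{i=1}^{N}\Phi_i\cdot(f_i)_{*}\nu$ (here $(f_i)_{*}\nu$ is the push-forward of $\nu$ under $f_i$), whose iterates have supports shrinking into $A$ because the $f_i$ are contractions, shows that the support of $\mu$ is contained in $A$, hence $\mu(A)=1$. The only property used below is $\mu(A)>0$, so any normalisation of the invariant measure serves equally well.

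Next I would prove $A\subseteq\overline{U}$. By the strong open set condition there is a point $a\in U\cap A$. Since $f_i(U)\subseteq U$ for every $i$, an induction on the length of the word $j$ gives $f_j(a)\in U$ for all $j\in I^{*}$. On the other hand $a\in A=\bigcup_{j\in I^{n}}f_j(A)$ for every $n$, and $\text{diam}\,f_j(A)\to 0$ uniformly as $|j|\to\infty$ because each $f_i$ is a contraction; hence the set $\{f_j(a):j\in I^{*}\}$ is dense in $A$, and therefore $A\subseteq\overline{\{f_j(a):j\in I^{*}\}}\subseteq\overline{U}$. (Equivalently, $f_i(\overline{U})\subseteq\overline{f_i(U)}\subseteq\overline{U}$ for each $i$, so $\overline{U}$ is a compact set invariant under the Hutchinson map and thus contains $A$.)

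Finally, since $\overline{U}=U\cup\partial U$ with $U\cap\partial U=\emptyset$, I obtain $0<\mu(A)\le\mu(\overline{U})=\mu(U)+\mu(\partial U)$, which forces $\mu(U)\ne 0$ or $\mu(\partial U)\ne 0$. The main obstacle is not this last computation but the two inputs it rests on, namely identifying $\mu$ as a measure of positive mass supported on $A$ and checking $A\subseteq\overline{U}$ from the strong open set condition; both are routine in the transfer-operator framework following Lau, which is why the paper records the result as a straightforward modification of \cite[Lemma $2.5$]{Lau}.
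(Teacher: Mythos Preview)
Your argument is correct and supplies exactly the details the paper omits: since the paper merely states that the proof is ``a straightforward modification of \cite[Lemma~2.5]{Lau}'' and gives no further argument, your reduction to $A\subseteq\overline{U}$ together with $\mu(A)>0$ is precisely the natural route and matches what the reference to Fan--Lau is meant to convey. Both of your justifications for $A\subseteq\overline{U}$ (density of $\{f_j(a):j\in I^{*}\}$ in $A$, or invariance of $\overline{U}$ under the Hutchinson map) are valid, and the identification of $\mu$ as a probability measure supported on $A$ is used elsewhere in the paper (e.g.\ in Lemma~\ref{new4}), so you are on solid ground there as well.
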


\begin{theorem} \label{mzero}
If the previous IFS satisfies the SOSC then $\mu(A_i \cap A_j)=0$ for every $i \ne j \in I^*$ with $|i| = |j|.$
\end{theorem}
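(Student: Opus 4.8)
The plan is to exploit the invariance of $\mu$ under the Hutchinson operator together with the self-similar structure of the measure, and to reduce the claim about cylinder intersections $A_i\cap A_j$ (for $|i|=|j|$, $i\neq j$) to the base case $\mu(A_m\cap A_n)=0$ for single indices $m\neq n$ in $I=\{1,\dots,N\}$. First I would recall that $\mu$ is the unique Borel probability measure satisfying $\mu=\sum_{m=1}^N p_m\, (f_m)_*\mu$ for the natural probability weights $p_m$ coming from the eigenmeasure of $(L_{\sigma_0})^*$, equivalently $\mu(E)=\sum_{m=1}^N p_m\,\mu(f_m^{-1}(E))$, and that $p_m>0$ for every $m$. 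Since each $f_m$ is one-to-one on $A$ with $\mu$ supported on $A$, this gives the cleaner relation $\mu(A_m\cap E)=p_m\,\mu(f_m^{-1}(E))$ for Borel $E\subseteq A$, and iterating along a word $i=i_1\dots i_n$ yields $\mu(A_i)=p_{i_1}\cdots p_{i_n}>0$ and, more importantly, $\mu(A_i\cap A_j)=p_{i_1}\,\mu\big(f_{i_1}^{-1}(A_{i_2\dots i_n}\cap A_j)\big)$ whenever $i_1=j_1$, which drops the common prefix; when $i_1\neq j_1$ the two sets sit inside $A_{i_1}$ and $A_{j_1}$ respectively and the problem collapses to the length-one case.

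So the crux is to show $\mu(A_m\cap A_n)=0$ for $m\neq n$ in $I$. Here I would invoke the previous lemma (the analogue of \cite[Lemma $2.5$]{Lau}): under the SOSC with open set $U$, either $\mu(U)\neq 0$ or $\mu(\partial U)\neq 0$. In fact, using $\sum_i\Phi_i=1$ on $A$ and the $\tilde L_{\sigma_0}$-invariance of $\mu$ described in the Note, one shows $\mu(U)>0$ (the second alternative is incompatible with $U$ being $f$-invariant and $U\cap A\neq\emptyset$ once one unwinds that $\mu(\partial U)>0$ would force, by invariance, $\mu\big(\bigcap_n \mathcal F^{-n}(\partial U)\big)$-type mass concentrated on an $\mathcal F$-invariant subset disjoint from the interior, contradicting ergodicity/uniqueness of $\mu$). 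Granting $\mu(U)>0$: the SOSC gives $f_m(U)\cap f_n(U)=\emptyset$ for $m\neq n$ and $f_m(U)\subseteq U$, hence $\sum_{m=1}^N \mu(f_m(U))\le \mu(U)$; but invariance gives $\mu(U)=\sum_m p_m\mu(f_m^{-1}(U))\ge \sum_m p_m\mu(U\cap f_m(U))$... — the clean way is: since $f_m$ is injective on $A$ and $U\cap A$ is $f_m$-invariant up to the disjointness, $\sum_m \mu(A\cap f_m(U))=\sum_m p_m\mu(A\cap U)=\mu(A\cap U)=\mu(U)$, and the sets $A\cap f_m(U)$ are pairwise disjoint, so they exhaust $\mu$-almost all of $U$, forcing $\mu\big(U\setminus\bigcup_m f_m(U)\big)=0$. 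Now $A_m\cap A_n\subseteq \overline{f_m(U)}\cap\overline{f_n(U)}$ cannot meet $U$ except in $\mu$-null fashion, because $f_m(U),f_n(U)$ are disjoint open sets; a point of $A_m\cap A_n$ lying in $U$ must lie in $f_k(U)$ for some single $k$ up to a null set, and $f_k(U)$ meets at most one of $A_m,A_n$ when $k\neq m,n$, while $k=m$ or $k=n$ forces the point into $f_m(U)\cap A_n\subseteq f_m(U)\cap f_n(\overline U)=\emptyset$ (openness) — so $\mu(A_m\cap A_n\cap U)=0$, and since $\mu(A\setminus U)=\mu(A)-\mu(U)$ is handled by pushing the same argument down one level (if $x\in A\setminus U$ then $x\in A_k$ for some $k$ and $f_k^{-1}x\in A\setminus U$ again, so the exceptional set is $\bigcap_n \mathcal F^n(A\setminus U)$, which is $\mathcal F$-invariant, nonempty only if it carries no $\mu$-mass by minimality of the support argument) we conclude $\mu(A_m\cap A_n)=0$.

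Assembling: combine the length-one result with the prefix-reduction. For general $i\neq j$ of common length $n$, let $k$ be the first position where they differ; strip the common prefix $i_1\dots i_{k-1}$ using the injectivity relation $\mu(A_i\cap A_j)=p_{i_1}\cdots p_{i_{k-1}}\,\mu(f_{i_1\dots i_{k-1}}^{-1}(A_i\cap A_j))$ and observe $f_{i_1\dots i_{k-1}}^{-1}(A_i\cap A_j)\subseteq A_{i_k\dots i_n}\cap A_{j_k\dots j_n}\subseteq A_{i_k}\cap A_{j_k}$ with $i_k\neq j_k$, which has $\mu$-measure zero by the base case; since the $p$'s are finite, $\mu(A_i\cap A_j)=0$.

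The main obstacle I anticipate is \emph{not} the prefix bookkeeping but the clean justification that $\mu(U)>0$ rather than $\mu(\partial U)>0$, and more precisely the claim that the ``exceptional'' $\mathcal F$-invariant set $\bigcap_n \mathcal F^n(A\setminus U)$ (or $\partial U$ residue) is $\mu$-null. This is exactly where one needs that $\mu$ is the \emph{unique} invariant measure — equivalently that the induced dynamics on $A$ is minimal in the relevant sense, or that $\operatorname{supp}\mu=A$ — so that a proper closed $\mathcal F$-forward-invariant subset cannot carry full or even positive mass in a way that survives iteration; handling this carefully, via the $\tilde L_{\sigma_0}$-invariance and $\sum_i\Phi_i\equiv 1$ on $A$ recorded in the Note above, is the delicate point. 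Everything else is elementary measure algebra using injectivity of the $f_i$ on $A$ and the self-similarity identity for $\mu$.
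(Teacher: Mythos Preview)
Your proposal has a genuine gap and misses the simplification that makes the paper's proof immediate.

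\textbf{The gap.} You assume $\mu=\sum_m p_m (f_m)_*\mu$ with \emph{constant} weights $p_m$, and repeatedly use identities like $\mu(A_m\cap E)=p_m\,\mu(f_m^{-1}(E))$ and $\mu(A_i)=p_{i_1}\cdots p_{i_n}$. In this paper $\mu$ is the eigenmeasure of $(L_{\sigma_0})^*$, so the correct invariance is
\[
\mu(E)=\sum_{i}\int_{f_i^{-1}(E)} \bigl(Df_i(x)\bigr)^{\sigma_0}\,d\mu(x),
\]
with $x$-dependent densities $(Df_i)^{\sigma_0}$ (equivalently the $\Phi_i(x)$ in the Note). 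The constant-weight identities you write down are simply false here, and your prefix-stripping computation $\mu(A_i\cap A_j)=p_{i_1}\cdots p_{i_{k-1}}\,\mu(f_{i_1\dots i_{k-1}}^{-1}(A_i\cap A_j))$ does not hold as stated. Moreover, even the set-level reduction $A_i\cap A_j\subseteq f_w(A_{i_k}\cap A_{j_k})$ only helps if you know $\mu(f_w(B))=0$ whenever $\mu(B)=0$, which itself needs justification and is close to what you are trying to prove. Your base-case argument for $\mu(A_m\cap A_n)=0$ is also incomplete: the paragraph beginning ``the clean way is'' never actually establishes the claimed exhaustion, and the handling of $A\setminus U$ via ``minimality of the support'' is asserted rather than proved.

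\textbf{What the paper does instead.} The paper bypasses all of this with a single topological observation. From the previous lemma (a dichotomy of Fan--Lau type) together with $U\cap A\neq\emptyset$ one gets $\mu(\partial U)=0$. Since $U_i:=f_i(U)$ and $U_j:=f_j(U)$ are disjoint open sets (OSC) with $A_i\subseteq\overline{U_i}$, $A_j\subseteq\overline{U_j}$, one has
\[
A_i\cap A_j\subseteq \overline{U_i}\cap\overline{U_j}\subseteq \partial U_i\cap\partial U_j,
\]
because a point in $\overline{U_i}\cap\overline{U_j}$ cannot be interior to either set. Then $\mu(\partial U_i)=0$ follows from $\mu(\partial U)=0$ by invariance, and the theorem is immediate --- no reduction to length one, no weights, no ergodicity argument. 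You actually wrote the inclusion $A_m\cap A_n\subseteq\overline{f_m(U)}\cap\overline{f_n(U)}$ at one point but did not take the next step to $\partial U_m\cap\partial U_n$, which is the whole trick.
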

\begin{proof}
Consider $U$ as the open set from the SOSC. Since $U \cup A \ne \emptyset,$ $\mu(U) \ne 0$. This together with the previous lemma yields $\mu(\partial U)=0.$ Further, by the definition of the SOSC and $ A_i \cap A_j \subseteq \overline{U}_i \cap \overline{U}_j$, we deduce $ A_i \cap A_j \subseteq \partial U_i \cap \partial U_j,$ hence the proof.
\end{proof}

\begin{definition}[\cite{Larman}]
If $Y\subset X$ and $p>0$ we denote by $N(Y,p)$ the (possibly infinite ) maximal number of disjoint closed balls with radius $p$ and centers in $B.$ If for each $0< \beta < 1$ there are constants $C$ and $D$ such that $N(U(p,x),\beta p)< C$ holds for each $D> p> 0$ and $x \in Y$ we call $Y$ an $\beta-$space.
\end{definition}
Note that \cite{Larman} each Euclidean space $\mathbb{R}^n$ is a $\beta-$space and also each compact subset of a $\beta-$space is a $\beta-$space.
\begin{definition}[\cite{Lau}]
An IFS $\{X; f_1,\dots, f_N\}$ is said to satisfy measure separated property with respect to a Borel measure $\mu$ if $\mu(A_i \cap A_j) = 0$ for every $i,j \in I^*$ with $i \ne j.$
\end{definition}
\begin{remark}\label{new5}
Note that if  $A_1, A_2, \dots, A_N$ are pairwise disjoint then the IFS will satisfy measure separated property with respect to any Borel measure. By Theorem \ref{mzero}, the IFS  $\mathcal{F}$ will satisfy the measure separated property with respect to invariant measure $\mu$ provided $\mathcal{F}$ satisfies the SOSC.
\end{remark}

\begin{lemma}\label{new4}
For an infinitesimal-similitude system with measure separated property with respect to the invariant measure $\mu$, the invariant measure $ \mu $ will satisfy $\mu(A_i)\le c_1 r_i^{\sigma_0}$ for each $i \in I^*$, where $\sigma_0= \dim_H(A).$ 
\end{lemma}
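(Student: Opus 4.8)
The plan is to read off the estimate from the eigenmeasure relation for $L_{\sigma_0}$, using the chain rule of Lemma \ref{Chain_lemma} to iterate it and the comparability $R_i\le c_1 r_i$ of Lemma \ref{new1} to pass from $R_i$ to $r_i$. Let $u=u_{\sigma_0}$ be the strictly positive eigenfunction of $L_{\sigma_0}$ (so $L_{\sigma_0}u=u$, since $r(L_{\sigma_0})=1$) and let $\nu$ be a strictly positive eigenmeasure of the adjoint, $L_{\sigma_0}^{*}\nu=\nu$, which exists in this setup; concretely,
\[
\int_X g\,d\nu=\sum_{i=1}^N\int_X\big((Df_i)(x)\big)^{\sigma_0}\,g(f_i(x))\,d\nu(x)\qquad(g\in\mathcal C(X)).
\]
In the present framework the invariant measure $\mu$ and $\nu$ are mutually absolutely continuous with Radon--Nikodym derivative bounded above and below away from $0$ (it is $u$, up to normalization); in particular $\mu$ and $\nu$ have the same null sets, so the measure-separated hypothesis $\mu(A_i\cap A_j)=0$ for $i\ne j$, $|i|=|j|$ (which by Remark \ref{new5} holds e.g.\ under the SOSC) passes to $\nu$, and it is enough to prove $\nu(A_i)\le C\,r_i^{\sigma_0}$ for every $i\in I^{*}$.

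First I would iterate the eigenmeasure relation $n$ times. Since $(L_{\sigma_0}^{n}g)(x)=\sum_{|w|=n}\big((Df_w)(x)\big)^{\sigma_0}g(f_w(x))$ — this is exactly where Lemma \ref{Chain_lemma}, in the form $(Df_w)(x)=(Df_{w_1})(f_{w_2}\cdots f_{w_n}(x))\cdots(Df_{w_n})(x)$, is used to collapse the product of derivative factors into a single power — one obtains $\nu=\sum_{|w|=n}(f_w)_{*}\big(((Df_w)(\cdot))^{\sigma_0}\nu\big)$. Now fix $i\in I^{*}$ with $|i|=n$ and evaluate this decomposition on the closed set $A_i=f_i(A)$. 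Each summand is a nonnegative measure dominated by $\nu$ and carried by $A_w=f_w(A)$; hence for $w\ne i$ its mass on $A_i$ is at most $\nu(A_i\cap A_w)=0$, while for $w=i$, since $f_i$ is one-to-one on $A$, it equals $\int_A\big((Df_i)(x)\big)^{\sigma_0}d\nu(x)$ (replacing $\mathbf 1_{A_i}$ by continuous functions and using regularity of $\nu$ makes this step rigorous). Therefore
\[
\nu(A_i)=\int_A\big((Df_i)(x)\big)^{\sigma_0}d\nu(x)\;\le\;R_i^{\sigma_0}\,\nu(A)\;\le\;c_1^{\sigma_0}\,r_i^{\sigma_0}\,\nu(A),
\]
the last inequality by Lemma \ref{new1}.

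Transferring back, $\min u\le u\le\max u$ gives $\mu(A_i)=\big(\int u\,d\nu\big)^{-1}\int_{A_i}u\,d\nu\le(\max u/\min u)\,\nu(A_i)/\nu(A)\le(\max u/\min u)\,c_1^{\sigma_0}\,r_i^{\sigma_0}$. Enlarging the structural constant (and, in keeping with the chain $c_1\le c_2\le c_3$ already in use, still calling it $c_1$) yields $\mu(A_i)\le c_1 r_i^{\sigma_0}$. Finally, the identity $\sigma_0=\dim_H(A)$ is precisely the conclusion of Theorem \ref{mainthm} (equivalently Theorem \ref{thm-nussbaum}), whose hypotheses are in force here.

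The step I expect to be the main obstacle — essentially the only nonroutine point — is the reduction of the measure identity to $\nu(A_i)=\int_A\big((Df_i)(x)\big)^{\sigma_0}d\nu(x)$: one must argue that the words $w\ne i$ of length $n$ drop out because the pieces $(f_w)_{*}\big(((Df_w)(\cdot))^{\sigma_0}\nu\big)$ sit inside the $\nu$-null overlaps $A_i\cap A_w$, and this is the sole place where the separation hypothesis enters. Iterating via the chain rule, exchanging $R_i$ for $r_i$ through Lemma \ref{new1}, and passing between $\nu$ and $\mu$ are all routine, as is absorbing the various constants into a single one.
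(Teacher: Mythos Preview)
Your argument is correct and follows essentially the same route as the paper's: exploit the eigenmeasure identity $L_{\sigma_0}^{*}\mu=\mu$, iterate it to the word length $n=|i|$ via the chain rule (Lemma \ref{Chain_lemma}), use the measure-separated hypothesis to discard the cross terms, and bound the surviving integral by $R_i^{\sigma_0}$ and then by $r_i^{\sigma_0}$ through Lemma \ref{new1}. One step in your write-up is superfluous: in this paper the invariant measure $\mu$ \emph{is} the eigenmeasure of $L_{\sigma_0}^{*}$ (this is the first line of the paper's proof and is set up in the Note and Remark preceding the lemma), so your $\nu$ equals $\mu$ and the passage back via $u$, together with the extra factor $\max u/\min u$, is unnecessary. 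On the other hand, your treatment of the cross terms---each summand $(f_w)_{*}\big(((Df_w))^{\sigma_0}\mu\big)$ is a sub-measure of $\mu$ carried by $A_w$, hence has no mass on $A_i$ when $w\ne i$---is arguably cleaner than the paper's pullback formulation, which implicitly relies on the fact that $f_i$-preimages of $\mu$-null sets are $\mu$-null. Note finally that both arguments actually produce the constant $c_1^{\sigma_0}$ rather than $c_1$; the paper is simply loose about this, so your ``enlarging the structural constant'' is no worse than what the paper does.
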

\begin{proof}
Let $k \in I^*.$ 
Since $ L_{\sigma_0}^* \mu= \mu$ and $ \big(L_{\sigma_0}^*\big)^n \mu= \mu~~ \forall n \in \mathbb{N}$, we shall prove the result for $k \in I^1 $, the result follows on similar lines to this. We have 
\begin{equation}
\begin{aligned}
\mu (A_k)&=L_{\sigma_0}^* \mu (A_k)\\ & = \sum_{i=1}^{N} ((Df_i)^{\sigma_0}\mu) \circ f_i^{-1}(A_k)\\
 & = \sum_{i=1}^{N} \int_{A_k}\Bigg(\big(Df_i\big)(f_i^{-1}(x))\bigg)^{\sigma_0} \mathrm{d}\mu ( f_i^{-1}(x))\\ & = \sum_{i=1}^{N} \int_{f_i^{-1}(A_k) \cap A}\Bigg(\big(Df_i\big)(y)\bigg)^{\sigma_0} \mathrm{d}\mu ( y)\\
 & \le  \sum_{i=1}^{N} \int_{f_i^{-1}(A_k) \cap A}R_i^{\sigma_0} \mathrm{d}\mu ( y)\\
 & \le  \sum_{i=1}^{N} c_1 r_i^{\sigma_0} \int_{f_i^{-1}(A_k) \cap A} \mathrm{d}\mu ( y)\\
 & \le  \sum_{i=1}^{N} c_1 r_i^{\sigma_0} \mu(f_i^{-1}(A_k) \cap A)\\
  &\le    c_1 r_k^{\sigma_0},
 \end{aligned}
\end{equation}
note that the fifth inequality follows from the definition of $R_i$, the sixth follows from Lemma \ref{new1}, and the last follows from the measure separated property and $\mu(A)=1.$ This completes the proof.
\end{proof}
\begin{theorem} 
If the IFS satisfies the SOSC and its attractor $A$ is a $\beta-$space then $ H^{\sigma_0}(A)>0.$
\end{theorem}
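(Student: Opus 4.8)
The plan is to establish a mass distribution (Frostman-type) lower bound for $H^{\sigma_0}(A)$ by using the invariant measure $\mu$ together with the estimate $\mu(A_i)\le c_1 r_i^{\sigma_0}$ from Lemma \ref{new4}, which applies since the SOSC forces the measure separated property (Remark \ref{new5}). The key point is to bound $\mu(B(x,\rho))$ from above by a constant multiple of $\rho^{\sigma_0}$ for all $x\in A$ and all small $\rho>0$; once this is done, the standard mass distribution principle immediately yields $H^{\sigma_0}(A)\ge \mu(A)/C = 1/C > 0$. The $\beta$-space hypothesis on $A$ is precisely what controls the combinatorics of how many pieces $A_j$ of comparable size can meet a ball $B(x,\rho)$.

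First I would fix $x\in A$ and a small radius $\rho$ (say $\rho<\delta/c$, with $\delta,c$ the constants from Lemmas \ref{new1}, \ref{new2}, \ref{new3}). Using the cutting set $\Gamma_b$ introduced before with $b\asymp \rho$, every point of $A$ lies in some $A_j$ with $j\in\Gamma_b$, and by Lemma \ref{new1} and the definition of $\Gamma_b$ one has $r_j\asymp b\asymp\rho$, while Lemma \ref{new3} gives $\mathrm{diam}(A_j)\le c_3 r_j|A|\lesssim\rho$. Hence $B(x,\rho)\cap A$ is covered by the family of those $A_j$, $j\in\Gamma_b$, that meet $B(x,\rho)$; call this family $\mathcal{G}$. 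Since each such $A_j$ has diameter $\lesssim\rho$, all of them are contained in a fixed ball $B(x,C_1\rho)$. Now I would invoke the $\beta$-space property: choosing in each $A_j$, $j\in\mathcal G$, a point and noting that the separation $D(A_i,A_j)$ between distinct pieces in $\Gamma_b$ is $\gtrsim\rho$ when they come from the SOSC open-set geometry (Equation \ref{eqn2.9} and the separation estimates in the SOSC theorem), the centers form a $\gtrsim\rho$-separated set inside $B(x,C_1\rho)$; the $\beta$-space definition then bounds $\#\mathcal G$ by an absolute constant $C_2$ independent of $x$ and $\rho$.

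With $\#\mathcal G\le C_2$ in hand, I would finish by writing
\begin{equation*}
\mu\bigl(B(x,\rho)\cap A\bigr)\le \sum_{j\in\mathcal G}\mu(A_j)\le \sum_{j\in\mathcal G} c_1 r_j^{\sigma_0}\le C_2\, c_1 \bigl(C_3\rho\bigr)^{\sigma_0}=C\,\rho^{\sigma_0},
\end{equation*}
using Lemma \ref{new4} for the middle inequality and $r_j\le C_3\rho$ for $j\in\Gamma_b$ for the last. Thus $\mu$ is $\sigma_0$-upper regular on $A$, and the mass distribution principle (see \cite{Fal1}) gives $H^{\sigma_0}(A)\ge \mu(A)/C=1/C>0$, as claimed.

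The main obstacle I anticipate is the second step — making precise the claim that the pieces $A_j$, $j\in\Gamma_b$, meeting $B(x,\rho)$ are uniformly separated at scale $\rho$ so that the $\beta$-space bound applies. Pieces in $\Gamma_b$ with distinct initial symbols are separated via the open sets $O_j$ as in Equation \ref{eqn2.9} and the SOSC construction, but pieces sharing a long common prefix are not separated at scale $\rho$; one must instead group them by their last distinct symbol and apply the separation estimate after stripping the common prefix (exactly the $\mathcal B$/$\mathcal C$ dichotomy behind the Proposition bounding $r_j/r_i$), then push the bound back through the (bi-Lipschitz, by Lemmas \ref{new1}–\ref{new2}) map associated with the prefix. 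Carefully assembling this so that the counting constant genuinely does not depend on $x$ or $\rho$ is where the real work lies; everything else is the routine mass distribution argument.
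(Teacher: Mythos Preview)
Your overall architecture is right and matches the paper: use Lemma \ref{new4} (via Remark \ref{new5}) to get $\mu(A_j)\le c_1 r_j^{\sigma_0}$, cover an arbitrary small Borel set $C$ by the pieces $A_j$, $j\in\Gamma_b$ with $b\asymp |C|$, bound the number of such pieces by a constant via the $\beta$-space hypothesis, and finish with the mass distribution principle. Where you diverge from the paper, and where the real gap lies, is the counting step.

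Your plan is to argue that the pieces $A_j$, $j\in\Gamma_b$, meeting $B(x,\rho)$ are $\gtrsim\rho$-separated, so that their centers form a separated net to which the $\beta$-space bound applies. This is not just technically awkward; it is false in general. The SOSC gives disjointness of the open sets $f_j(U)$, not separation of the closed sets $A_j$: distinct $A_i$, $A_j$ with $i,j\in\Gamma_b$ can touch (indeed $\mu(A_i\cap A_j)=0$ is all one gets, cf.\ Theorem \ref{mzero}), and neither Equation \ref{eqn2.9} nor the special estimate \eqref{eqn2.14} yields a uniform lower bound $D(A_i,A_j)\gtrsim\rho$ for arbitrary incomparable $i,j\in\Gamma_b$. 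Stripping a common prefix does not help, because even at the first level the pieces $A_1,\dots,A_N$ may intersect.

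The paper circumvents this entirely. It does not separate the $A_j$; it separates \emph{balls inside} them produced by the SOSC. Fix $x\in U\cap A$ and $q>0$ with $B(x,q)\subset U$. For $j\in P:=\{i\in\Gamma_b:A_i\cap C\neq\emptyset\}$ the open sets $f_j(U)$ are pairwise disjoint by the OSC, hence so are the balls
\[
B\bigl(f_j(x),\,c_2^{-1}q\,r_j\bigr)\subset f_j\bigl(B(x,q)\bigr)\subset f_j(U),
\]
using Lemma \ref{new2}. These disjoint balls have radii $\asymp b$ and centers $f_j(x)\in A_j$, and since each $A_j$ meets $C$ and has diameter $\le c_3 r_j|A|\lesssim b$, they all lie in a single ball of radius $\asymp b$. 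Now the $\beta$-space definition (maximal number of \emph{disjoint} balls of radius $\beta p$ with centers in a ball of radius $p$) applies directly and gives $\#P\le K$. From here your final display and the mass distribution principle finish the proof exactly as you wrote. So the fix is one line: replace ``the $A_j$ are separated'' by ``the balls $B(f_j(x),c_2^{-1}qr_j)$ are disjoint because $f_j(U)$ are''.
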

\begin{proof}
Since the involved IFS satisfies the SOSC, we have an open set $U$ such that $U\cap A \ne \emptyset.$ Let $x \in  U\cap A$, then there exists $q >0$ such that $B(x,q) \subset U.$ Let $C$ be a Borel measurable subset of $X$ with $b= \frac{|C|}{|A|} \le 1.$ Further, let $P=\{i \in \Gamma_b: A_i \cap C\ne \emptyset\}.$ By the definition of the SOSC, we have the pairwise disjoint balls $$ B(f_i(x),c_2^{-1} q r_i)\cap A \subseteq f_i(B(x,q)) \cap A.$$ Let $y \in C$, then 
\begin{equation*}
\begin{aligned}
B(y, |C|+\max_{i \in P} |A_i|+\max c_2^{-1} q r_i) & \subset B(y, b|A|+c_3 |A| \max_{i \in P} r_i+\max c_2^{-1} q r_i)\\ & 
\subset B(y, b|A|+c_3 |A| b+ c_2^{-1} q b) \\
& 
= B(y, b(|A|+c_3 |A| + c_2^{-1} q )),
\end{aligned}
\end{equation*}
from the above, it is obvious that all the aforesaid disjoint balls contained in $B(y, b(|A|+c_3 |A| + c_2^{-1} q )).$ Since $A$ is a $\beta-$space, there exists a constant $K$ such that $card(P) \le K.$ By Lemma \ref{new4} and Remark \ref{new5}, this further yields
\begin{equation*}
\begin{aligned}
\mu(C) & \le K \max_{j \in P} \mu(A_j)\\
 & \le K \max_{j \in P} c_1 r_j^{\sigma_0}\\
  & \le K  c_1 b^{\sigma_0}\\
  & \le K  c_1 |A|^{-\sigma_0} |C|^{\sigma_0},
\end{aligned}
\end{equation*}
thanks to the mass distribution principle, the result follows.
\end{proof}
\begin{remark}
The previous theorem is a generalization of \cite[ Theorem $2.4$]{Schief2} from similarity maps to infinitesimal similitudes. 
\end{remark}
\begin{theorem}
If $\gamma_{\epsilon}< \infty$ for some $\epsilon>0,$ then $A$ is a $\beta-$space.
\end{theorem}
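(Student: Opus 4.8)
The plan is to verify the $\beta$-space property of $A$ directly from the definition, using that the hypothesis $\gamma_\epsilon<\infty$ bounds, uniformly over $j\in I^*$, the number of pieces $A_i$ of size comparable to that of $A_j$ which meet the neighbourhood $O_j$ of $A_j$. So fix $\beta\in(0,1)$, let $x\in A$, let $p>0$ be small, and let $\overline{B}(y_1,\beta p),\dots,\overline{B}(y_M,\beta p)$ be pairwise disjoint closed balls with centres $y_l\in B(x,p)\cap A$. We must bound $M$ by a constant depending only on $\beta$ and on the system.

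First I would choose a good \emph{host piece} $A_j$ about $x$. Since $x\in A$ there is a code $\omega\in I^\infty$ with $\{x\}=\bigcap_n A_{\omega|_n}$. Writing $r=\min_i r_i$ and $b_1=2c_1c_2\epsilon^{-1}r^{-1}p$ (so that $b_1<1$ once $p$ lies below a threshold $D$), let $j=\omega|_n$ be a prefix of $\omega$ lying in $\Gamma_{b_1}$. Then $x\in A_j$, $r_j<b_1$, and $r_j\ge b_1 r/c_1=2c_2\epsilon^{-1}p$ by Lemma \ref{new1}, so $c_2^{-1}\epsilon r_j\ge 2p>p$. Since $x\in A_j$, \eqref{eqn2.9} now gives $B(x,p)\subseteq B(A_j,p)\subseteq B(A_j,c_2^{-1}\epsilon r_j)\subseteq O_j$, whence every $y_l\in O_j\cap A$; moreover $r_j\asymp p$ with constants depending only on the system, and for $p$ small one also has $0<|O_j|\le(c_3|A|+2c_2\epsilon)r_j<1$ (positivity of $|O_j|$ follows since $A$ is perfect and the $f_k$ are one-to-one on $A$).

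Now I would count in two stages. For each $l$ pick a prefix $i^{(l)}$ of a code of $y_l$ lying in $\Gamma_{|O_j|}$; then $y_l\in A_{i^{(l)}}\cap O_j$, so $A_{i^{(l)}}\cap O_j\ne\emptyset$ and hence $i^{(l)}\in\Gamma(j)$ by the construction of $\Gamma(\cdot)$. Therefore the words $i^{(1)},\dots,i^{(M)}$ take at most $\#\Gamma(j)\le\gamma_\epsilon$ distinct values. Next, fix one such word $i\in\Gamma(j)$ and bound $\#\{l:i^{(l)}=i\}$. For those $l$ write $y_l=f_i(z_l)$ with $z_l\in A$ ($f_i$ being one-to-one on $A$); by the Proposition giving $c^{-1}\le r_j/r_i\le c$ we have $r_i\le K_1 p$ for a constant $K_1$ depending only on the system, while disjointness of the balls forces $d(y_l,y_{l'})>\beta p$ for $l\ne l'$, so Lemma \ref{new3} yields
\[
\beta p<d\bigl(f_i(z_l),f_i(z_{l'})\bigr)\le c_3 r_i\,d(z_l,z_{l'})\le c_3K_1 p\,d(z_l,z_{l'}).
\]
Thus $\{z_l:i^{(l)}=i\}$ is a $\bigl(\beta/(c_3K_1)\bigr)$-separated subset of the compact, hence totally bounded, space $A$, and so has at most some finite cardinality $P(\beta)$ depending only on $\beta$ and the system.

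Combining the two stages gives $M\le\gamma_\epsilon\,P(\beta)=:C(\beta)$, a bound independent of $x$ and of $p\in(0,D)$; since $\beta\in(0,1)$ was arbitrary, $A$ is a $\beta$-space. I expect the real work to be the constant bookkeeping in the choice of $j$ --- arranging $B(x,p)\subseteq O_j$ while keeping $r_j\asymp p$ --- together with checking that $\gamma_\epsilon$ genuinely caps the number of words $i\in\Gamma_{|O_j|}$ with $A_i\cap O_j\ne\emptyset$, i.e.\ that this set is (essentially) $\Gamma(j)$; the reverse inclusion $\{i\in\Gamma_{|O_j|}:A_i\cap O_j\ne\emptyset\}\subseteq\Gamma(j)$ should follow by induction on $|j|$ from the definitions of $\mathcal B$ and $\mathcal C$. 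The within-piece estimate is a routine use of total boundedness of $A$, and no idea beyond $\gamma_\epsilon<\infty$ is needed.
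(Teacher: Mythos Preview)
Your overall plan matches the paper's: choose a host word $j$ with $B(x,p)\subseteq O_j$ and $r_j\asymp p$, argue that the centres $y_l$ fall into at most $\gamma_\epsilon$ pieces indexed by $\Gamma(j)$, and then bound within each piece. Where you diverge is in the within-piece count. The paper first proves, via the eigenvector equation for a sub-IFS built from pairwise disjoint cylinders $A_{i_j}$, the global bound $N(A,p)\le\bigl(|A|/(p\,r_{\min})\bigr)^{\sigma_0}$, and then applies it inside each $A_i$, $i\in\Gamma(j)$, obtaining $N(B(x,R),\beta R)\le\gamma_\epsilon K\beta^{-\sigma_0}$ for a system constant $K$. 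Your pull-back-and-total-boundedness argument is more elementary and perfectly adequate for the $\beta$-space property, though it foregoes the explicit power law in $\beta$.

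There is one genuine wrinkle in your Step~4. You pick $i^{(l)}$ as a prefix of a code of $y_l$ lying in $\Gamma_{|O_j|}$ and then claim $i^{(l)}\in\Gamma(j)$, hoping to show $\{i\in\Gamma_{|O_j|}:A_i\cap O_j\ne\emptyset\}\subseteq\Gamma(j)$ ``by induction on $|j|$''. That induction does not close as stated: if $j=kj'$ and $i=ki'\in\Gamma_{|O_j|}$, applying the inductive hypothesis to $j'$ requires $i'\in\Gamma_{|O_{j'}|}$, and the distortion constants relating $r_{ki'}$ to $r_kr_{i'}$ and $|O_{kj'}|$ to $r_k|O_{j'}|$ do not cancel. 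What \emph{does} follow by the same induction, and is all you actually need, is the covering statement $A\cap O_j\subseteq\bigcup_{i\in\Gamma(j)}A_i$: for $|j|=1$ this is the definition together with the fact that $\{A_i:i\in\Gamma_b\}$ covers $A$; for $j=kj'$ the $\mathcal{B}$-part covers $A_k\cap O_j$ (apply $f_k$ to the inductive hypothesis, using injectivity) and the $\mathcal{C}$-part covers $(A\setminus A_k)\cap O_j$. With this in hand, simply \emph{choose} each $i^{(l)}\in\Gamma(j)$ with $y_l\in A_{i^{(l)}}$ directly; the Proposition still gives $r_{i^{(l)}}\asymp r_j\asymp p$, and your separation/total-boundedness count goes through unchanged.
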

\begin{proof}
Let us first consider $p \le |A|.$ Our aim is to show that $$ N(A,p) \le \Big(\frac{|A|}{p r_{\min}}\Big)^{\sigma_0}.$$ For this, we assume disjoint balls $C_1,C_2, \dots, C_n$ having radius $p$ and centers $x_j \in A.$ It follows that there exist $i_j \in \Gamma_{p/|A|}$ with $x_j \in A_{i_j}.$ We note that the sets $A_{i_j} \subset C_j$ are pairwise disjoint. Further, we see that the IFS $\{X;f_{i_j},j=1,2,\dots n\}$ satisfies the hypothesis of Theorem , hence the Hausdorff dimension of the attractor $A^* \subset A$ of the IFS is equal to a number $\beta \le \sigma_0$. Moreover, let $h$ be a strictly positive eigenvector corresponding to eigenvalue $r(L_{\beta}).$ That is, $L_{\beta}h = r(L_{\beta}) h.$ Since $r(L_{\beta})=1,$ we get $L_{\beta}h =  h.$ From the Equation \ref{Eqn1}, we obtain
\begin{equation*}
\begin{aligned}
h(y)& =\sum_{j =1}^n \big((Df_{i_j})(y)\big)^{\beta} h(f_{i_j}(y)) \\ & \ge \sum_{j =1}^n (r_{i_j})^{\beta} h(f_{i_j}(y)) \\ & \ge \sum_{j =1}^n (r_{i_j})^{\beta} m_h \\ & \ge n ~m_h \Big(\frac{p r_{\min}}{|A|}\Big)^{\beta}\\ & \ge n ~m_h \Big(\frac{p r_{\min}}{|A|}\Big)^{\sigma_0},
\end{aligned}
\end{equation*}
the above gives the required result.
\par
Now we consider $x \in A,$ and arbitrary positive real numbers $R$ and $ p$. For $b =\frac{R}{\epsilon}$, selecting $i \in \Gamma_b$ with $x \in A_i$, we have $B(x,R) \subset B(A_i, \epsilon r_i^*) .$ Enumerating the disjoint balls having centers in $B(A_i, \epsilon r_i^*)$, by first part of the proof we get $$ N(B(x,R),p) \le \sum_{j \in \Gamma(i)}\Big(\frac{|A_j|}{p r_{\min}}\Big)^{\sigma_0} \le  \sum_{j \in \Gamma(i)}\Big(\frac{c_3 r_j |A|}{p r_{\min}}\Big)^{\sigma_0} \le \gamma_{\epsilon} K \Big(\frac{R}{p }\Big)^{\sigma_0}, $$
for some constant $K$ which does not depend on $p$ and $R$.
\end{proof}
\begin{corollary}
If $\gamma_{\epsilon}< \infty$ for some $\epsilon>0,$ then $H^{\sigma_0}(A)>0.$
\end{corollary}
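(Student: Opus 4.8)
The plan is to deduce the corollary directly from the three theorems immediately preceding it, since all of the substantive estimates have already been carried out there; what remains is only to line up the hypotheses. First I would apply the theorem asserting that if $\gamma_{\epsilon}<\infty$ for some $\epsilon>0$ then the strong open set condition holds. This produces an open set $U$ with $U\cap A\neq\emptyset$, $f_m(U)\subseteq U$ for every $1\le m\le N$, and $f_m(U)\cap f_n(U)=\emptyset$ for $m\neq n$, so that the IFS satisfies the SOSC in the sense used throughout this section.

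Next, for the \emph{same} $\epsilon$, I would invoke the theorem stating that $\gamma_{\epsilon}<\infty$ implies that the attractor $A$ is a $\beta$-space; the proof there supplies, for a constant $K$ independent of $p$ and $R$, the uniform bound $N(B(x,R),p)\le \gamma_{\epsilon}K\,(R/p)^{\sigma_0}$ (with the regime $p\le|A|$ handled separately), which is precisely the defining inequality of a $\beta$-space for the bounded set $A$. Having both the SOSC and the $\beta$-space property available, I would then quote the theorem ``if the IFS satisfies the SOSC and its attractor $A$ is a $\beta$-space then $H^{\sigma_0}(A)>0$'' to reach the conclusion, which is the assertion of the corollary.

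The only genuine bookkeeping point --- and hence the ``main obstacle,'' modest as it is --- is to confirm that the exponent $\sigma_0$ occurring in each of these three statements refers to one and the same quantity, namely $\dim_H(A)$, equivalently the unique solution of $r(L_{\sigma_0})=1$ furnished by Theorem \ref{mainthm} together with the continuity and strict monotonicity of the map $\sigma\mapsto r(L_{\sigma})$. Once this is verified, no further argument is needed. One may additionally remark that, combined with the upper estimate $\mu(A_i)\le c_1 r_i^{\sigma_0}$ of Lemma \ref{new4} and a companion covering argument, this corollary suggests the sharper conclusion $0<H^{\sigma_0}(A)<\infty$ under the same hypotheses.
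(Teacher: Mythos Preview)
Your proposal is correct and matches the paper's intended argument exactly: the paper gives no explicit proof for this corollary, placing it immediately after the theorem that $\gamma_{\epsilon}<\infty$ forces $A$ to be a $\beta$-space, so the corollary is meant to follow by combining that theorem with the earlier results that $\gamma_{\epsilon}<\infty$ yields the SOSC and that SOSC together with the $\beta$-space property gives $H^{\sigma_0}(A)>0$. Your verification that the exponent $\sigma_0$ is the same throughout is a sensible piece of hygiene, and your closing remark about $0<H^{\sigma_0}(A)<\infty$ goes slightly beyond the paper but is harmless.
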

\begin{theorem}
If $A_1, A_2, \dots, A_N$ are pairwise disjoint then $ H^{\sigma_0}(A)>0.$
\end{theorem}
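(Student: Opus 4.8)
The plan is to reduce the statement to the corollary just above, which asserts that $\gamma_\epsilon<\infty$ for some $\epsilon>0$ already forces $H^{\sigma_0}(A)>0$; so the whole task is to exhibit an admissible $\epsilon$ for which the index sets $\Gamma(j)$, $j\in I^*$, are uniformly finite. The starting point is that, as $A_1,\dots,A_N$ are pairwise disjoint compact sets, the number $\rho:=\min_{1\le i<j\le N}D(A_i,A_j)$ is a minimum of finitely many strictly positive numbers, hence $\rho>0$.

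First I would fix $\epsilon$ with $0<\epsilon<\min\{2^{-1}c_3^{-1}\delta,\,\rho/c_2\}$, so that all constructions preceding this theorem remain valid and, in addition, $c_2\epsilon<\rho$. The key step is the claim that for every $k\in I=\{1,\dots,N\}$ and every $j\in I^*$ the set $\mathcal{C}$ in the decomposition $\Gamma(kj)=\mathcal{B}\cup\mathcal{C}$ is empty. Indeed, Equation~\ref{eqn2.9} gives $O_{kj}\subseteq B(A_{kj},c_2\epsilon r_{kj})$, and $A_{kj}=f_k(A_j)\subseteq f_k(A)=A_k$; since $r_{kj}\le c_{\max}<1$, every point of $O_{kj}$ lies within $c_2\epsilon r_{kj}\le c_2\epsilon<\rho$ of $A_k$. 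On the other hand, any $i\in I^*$ with $i_1\ne k$ has $A_i\subseteq A_{i_1}$ with $D(A_k,A_{i_1})\ge\rho$, so $A_i\cap O_{kj}=\emptyset$; thus no such $i$ enters $\mathcal{C}$, proving the claim.

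With the claim in hand, $\Gamma(kj)=\mathcal{B}=\{ki:i\in\Gamma(j)\}$, hence $\#\Gamma(kj)=\#\Gamma(j)$; iterating, $\#\Gamma(w)=\#\Gamma(w_n)$ for every $w=w_1\cdots w_n\in I^*$, so $\gamma_\epsilon=\max_{1\le m\le N}\#\Gamma(m)$. Since $\Gamma(m)\subseteq\Gamma_{|O_m|}$ and every $\Gamma_b$ with $b>0$ is finite — a word $w\in\Gamma_b$ satisfies $b\le r_{w_1\cdots w_{n-1}}\le c_{\max}^{\,n-1}$, which bounds its length — we conclude $\gamma_\epsilon<\infty$, and the corollary above yields $H^{\sigma_0}(A)>0$.

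I do not anticipate a serious obstacle; the argument is essentially bookkeeping organized around the single observation that taking $\epsilon<\rho/c_2$ decouples the recursion for $\Gamma(kj)$ by killing its ``new'' part $\mathcal{C}$. The points that need a little care are choosing $\epsilon$ small enough to remain admissible for the earlier constructions, the uniform bound $r_{kj}<1$ which makes $B(A_{kj},c_2\epsilon r_{kj})$ sit strictly inside the $\rho$-neighborhood of $A_k$, and the finiteness of each $\Gamma_b$ — all of which follow at once from the contraction estimates already established.
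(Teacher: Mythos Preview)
Your argument is correct, but it takes a genuinely different route from the paper's own proof. You reduce the statement to the corollary by showing $\gamma_\epsilon<\infty$: choosing $\epsilon<\rho/c_2$ forces the ``new'' part $\mathcal{C}$ of every $\Gamma(kj)$ to be empty, so the recursion collapses to $\#\Gamma(w)=\#\Gamma(w_n)$ and $\gamma_\epsilon=\max_{1\le m\le N}\#\Gamma(m)<\infty$. The paper instead argues directly via the mass distribution principle: from the first-level separation $\delta_*=\min_{i\ne j}D(A_i,A_j)>0$ it deduces that any sufficiently small Borel set $C$ meets at most one piece $A_i$, $i\in\Gamma_b$, and then Lemma~\ref{new4} gives $\mu(C)\le c_1 r_i^{\sigma_0}\lesssim |C|^{\sigma_0}$, whence $H^{\sigma_0}(A)>0$. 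Your approach is structurally cleaner --- it shows, as a by-product, that strong separation forces $\gamma_\epsilon<\infty$, and hence both the SOSC and the $\beta$-space property --- but it leans on the heavier machinery of the two preceding theorems. The paper's proof is more self-contained, needing only the invariant-measure estimate of Lemma~\ref{new4} and the mass distribution principle. Both arguments pivot on the same elementary observation (that $O_{kj}$ stays within distance $<\rho$ of $A_k$ once $\epsilon$ is small enough), so the difference is really one of packaging.
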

\begin{proof}
Define $\delta_*= \min_{i \ne j} D(A_i,A_j)$. Since $A_1, A_2, \dots, A_N$ are pairwise disjoint, $\delta_* >0.$ Without loss of generality we assume $2 c_3 \delta_* \le \delta.$
Using Equation \ref{eqn2.9} we obtain $D(A_i,A_j) \ge c_2^{-1} r_i^* \delta_*$ for incomparable pair of $i,j \in I^*.$ Let $C$ be a Borel measurable set of $X$ such that $b:=\frac{2 c_2 ~d(C)}{\delta_*} \le 1.$ From the previous line, one gets $d(C) < c_2^{-1} r_i^* \delta_*$, which further implies that there is at most one set $A_i,~~ i \in \Gamma_b,$ intersecting $C$. Hence we have 
$$ \mu(C) \le \mu(A_k) \le c_1 r_k^{\sigma_0}< c_1 b^{\sigma_0}= c_1 2^{\sigma_0} \delta_*^{- \sigma_0} d(C)^{\sigma_0}.$$ 
Now, the mass distribution principle implies $ H^{\sigma_0}(A)>0.$
\end{proof}

\section{Sub-self-infinitesimal similar sets}
Falconer \cite{Fal2} introduces the concept of sub-self similar sets. He defines a nonempty compact set $C \subset \mathbb{R}^n$ a sub-self similar set for an IFS $\{\mathbb{R}^n: f_1,f_2,\dots, f_N\}$ if $C$ satisfies $C \subseteq \cup_{i=1}^N f_i(C)$. 
\begin{proposition}
Let $\mathcal{F}=\{X; f_1,f_2, \dots, f_N\}$ be an IFS where all maps $f_i$ are contraction. Then $C$ is a sub-self-infinitesimal similar set for $\mathcal{F}$ if and only if $C= \pi(J)$ for some compact set $J \subseteq I^{\infty}$ with the property: $(i_2,i_3,\dots) \in J$ whenever $(i_1,i_2,\dots) \in J$.  
\end{proposition}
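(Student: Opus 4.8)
The plan is to mimic the classical coding-space characterization of sub-self-similar sets due to Falconer, carried over verbatim to the infinitesimal-similitude setting, since the argument never uses conformality or similarity — only that each $f_i$ is a contraction, so that the coding map $\pi: I^\infty \to X$ is well defined and continuous (here $\pi(i_1,i_2,\dots) = \lim_{n\to\infty} f_{i_1}\circ f_{i_2}\circ\cdots\circ f_{i_n}(x_0)$ for any $x_0 \in X$, the limit being independent of $x_0$ because the $f_i$ are contractions on the compact space $X$). First I would fix notation: for $J \subseteq I^\infty$ write $\mathcal S J = \{(i_2,i_3,\dots) : (i_1,i_2,\dots)\in J\}$ for the shift, so the hypothesis on $J$ in the statement is precisely $\mathcal S J \subseteq J$, equivalently $J \subseteq \bigcup_{i=1}^N \{i\} \times (\text{something in } J)$, which under $\pi$ will translate into $\pi(J) \subseteq \bigcup_{i=1}^N f_i(\pi(J))$.

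For the ``if'' direction, suppose $J \subseteq I^\infty$ is compact and shift-invariant in the stated sense, and set $C = \pi(J)$. Then $C$ is compact and nonempty as the continuous image of a nonempty compact set. To see $C \subseteq \bigcup_i f_i(C)$: take $x \in C$, write $x = \pi(\omega)$ with $\omega = (i_1,i_2,\dots) \in J$; by hypothesis $\sigma\omega := (i_2,i_3,\dots) \in J$, and by the obvious identity $\pi(\omega) = f_{i_1}(\pi(\sigma\omega))$ (immediate from the definition of $\pi$) we get $x = f_{i_1}(\pi(\sigma\omega)) \in f_{i_1}(C)$. Hence $C$ is sub-self-infinitesimal similar.

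For the ``only if'' direction, suppose $C$ is nonempty compact with $C \subseteq \bigcup_{i=1}^N f_i(C)$. Define
\begin{equation*}
J = \{(i_1,i_2,\dots) \in I^\infty : f_{i_n}\circ f_{i_{n+1}}\circ\cdots\circ f_{i_{n+k}}(C) \cap C \neq \emptyset \text{ for all } n\ge 1,\ k\ge 0\}.
\end{equation*}
This $J$ is closed in $I^\infty$ (each defining condition is closed, using compactness of $C$ and continuity of the $f_i$), hence compact; it is nonempty because, starting from any point of $C$ and repeatedly using $C \subseteq \bigcup_i f_i(C)$, one builds an address $(i_1,i_2,\dots)$ with $f_{i_n}\cdots f_{i_m}(C)\cap C \ne\emptyset$ for all $n\le m$ (a standard König's-lemma / compactness selection). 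It is immediate from the definition that $(i_1,i_2,\dots)\in J$ implies $(i_2,i_3,\dots)\in J$. Finally one must check $\pi(J) = C$: the inclusion $\pi(J)\subseteq C$ follows because for $\omega\in J$ the sets $f_{i_1}\cdots f_{i_n}(C)$ are nonempty, decreasing (after intersecting with appropriate translates), have diameters $\to 0$, and each meets $C$, so their intersection — which is $\{\pi(\omega)\}$ — lies in the closed set $C$; conversely $C \subseteq \pi(J)$ because every $x\in C$ admits, by the selection argument above, an address lying in $J$ and coding $x$.

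The main obstacle is the ``only if'' direction, specifically verifying $\pi(J) \subseteq C$ and the nonemptiness/selection step: one needs the contraction property to guarantee $\mathrm{diam}\big(f_{i_1}\circ\cdots\circ f_{i_n}(C)\big)\to 0$ (so that the nested sets pin down $\pi(\omega)$), and one needs a compactness argument to extract an infinite admissible address from the finite ones. Both are routine once set up — the diameter bound is $\mathrm{diam}(f_{i_1}\cdots f_{i_n}(C)) \le c_{i_1}\cdots c_{i_n}\,\mathrm{diam}(C)$ with $c_i<1$, and the selection is König's lemma on the finitely-branching tree of admissible finite words — but they are where all the content sits; the shift-invariance bookkeeping is purely formal.
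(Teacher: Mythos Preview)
Your proposal is correct and matches the paper's approach: the paper simply defers to Falconer's Proposition~2.1 in \cite{Fal2}, and you have written out precisely that argument (the coding map $\pi$ uses only the contraction hypothesis, so nothing about infinitesimal similitudes is needed). One small imprecision: the sets $f_{i_1}\cdots f_{i_n}(C)$ need not be nested, so their ``intersection'' may be empty --- the clean way to get $\pi(\omega)\in C$ is to pick $y_n\in f_{i_1}\cdots f_{i_n}(C)\cap C$, note $d(y_n,\pi(\omega))\le\mathrm{diam}\,f_{i_1}\cdots f_{i_n}(X)\to 0$, and use that $C$ is closed.
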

\begin{proof}
Proof follows from Proposition $2.1$ in \cite{Fal2}.
\end{proof}
We introduce the following notations. $$J=\{(i_1,i_2,\dots) \in I^{\infty}: \pi((i_n,i_{n+1},\dots) ) \in C ~~ \forall n \in \mathbb{N}\}.$$
For $n \in \mathbb{N}$, we denote $I^{\infty}_n =\{i|_n: i \in I^{\infty}\}$ and $J_n=\{i|_n: i \in J\}$ with $i|_n=i_1 i_2 \dots i_n.$
For $i \in J_n$ and $j \in J_m$, we define $ij=i_1 i_2 \dots i_n j_1 j_2 \dots j_m.$ It is easy to observe that both $i \in J_n$ and $j \in J_m$ hold whenever $ij \in J_{n+m}.$ 
For each $n \in \mathbb{N}$ and $\sigma \ge 0$, define a bounded linear operator $_nL_{\sigma}:\mathcal{C}(X) \to \mathcal{C}(X)$ by $$\big( _nL_{\sigma} g\big)(x) = \sum_{i \in J_n} (Df_i)^{\sigma} g(f_i(x)).$$

Using \cite[Theorem $5.4$]{Nussbaum4}, it can be obtained that the operator $_nL_{\sigma}$ consists a positive eigenvector corresponding to spectral radius $r(_nL_{\sigma})$ of $_nL_{\sigma}.$ Also the function $\sigma\to r(_nL_{\sigma}) $ is continuous and strictly decreasing, which produces a unique $\sigma_n$ such that $r(_nL_{\sigma_n})=1.$ From this, it is immediate to prove the following.

\begin{theorem}
Let $C$ be a sub-self-infinitesimal-similar set for $\mathcal{F}$ such that $C=\pi(J)$ for some $J \subset I^{\infty}.$ If the IFS $\mathcal{F}$ satisfies the SOSC, then $\dim_H(C)=\underline{\dim}_B(C)=\overline{\dim}_B(C)= \sigma_{\infty},$ where $\sigma_{\infty}= \lim_{n \to \infty} \sigma_n.$
\end{theorem}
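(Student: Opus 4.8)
The plan is to trap both $\dim_H(C)$ and $\overline{\dim}_B(C)$ between $\sigma_\infty$ from above and from below, getting the upper bound from the finite iterated function systems $\mathcal{F}_n=\{X; f_i: i\in J_n\}$ and the lower bound from an internal ``full-shift'' sub-IFS. For the upper bound, fix $n$: the family $\mathcal{F}_n$ consists of contracting infinitesimal similitudes with strictly positive H\"older $Df$ (compositions of the $f_i$ are contractions, and are infinitesimal similitudes by Lemma \ref{Chain_lemma}), and the operator attached to $\mathcal{F}_n$ at exponent $\sigma$ is exactly ${}_nL_\sigma$; hence its critical exponent is $\sigma_n$, so $\dim_H(A_n)\le\sigma_n$ by the elementary half of the dimension formula, and $\overline{\dim}_B(A_n)=\dim_H(A_n)$ by the theorem above equating Hausdorff and box dimension for attractors of such systems, where $A_n=\pi\big((J_n)^{\mathbb N}\big)$ is the attractor of $\mathcal{F}_n$. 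Chopping any sequence in $J$ into consecutive words of length $n$ and using shift-invariance of $J$ shows each such word lies in $J_n$, so $C=\pi(J)\subseteq A_n$; monotonicity of the dimensions then gives
\[
\dim_H(C)\le\underline{\dim}_B(C)\le\overline{\dim}_B(C)\le\overline{\dim}_B(A_n)=\dim_H(A_n)\le\sigma_n
\]
for every $n$, whence $\overline{\dim}_B(C)\le\inf_n\sigma_n$.

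Next I would justify that $\sigma_\infty:=\lim_n\sigma_n$ exists and equals $\inf_n\sigma_n$. Since $J_{n+m}\subseteq\{ij:i\in J_n,\,j\in J_m\}$, one has ${}_{n+m}L_\sigma\le {}_nL_\sigma\circ{}_mL_\sigma$ as positive operators; combined with $r_{ij}\le c_1 r_ir_j$ and the bounded-distortion comparison of $r({}_nL_\sigma)$ with $\sum_{i\in J_n}r_i^\sigma$ up to constants independent of $n$ (both from Lemma \ref{new1}, the second because the normalized eigenfunctions have uniformly bounded oscillation), the sequence $n\mapsto\log\sum_{i\in J_n}r_i^\sigma$ is subadditive up to a fixed additive constant. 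Hence the pressure $P(\sigma):=\lim_n\tfrac1n\log\sum_{i\in J_n}r_i^\sigma$ exists, is finite, continuous and strictly decreasing in $\sigma$, and $\tfrac1n\log r({}_nL_\sigma)\to P(\sigma)$. As $r({}_nL_\sigma)$ is strictly decreasing in $\sigma$ with value $1$ at $\sigma_n$, comparing the sign of $P$ on the two sides of its unique zero forces $\sigma_n\to\sigma_\infty$, that zero; and $J_{mn}\subseteq(J_n)^m$ gives $\sigma_{mn}\le\sigma_n$, so $\sigma_\infty=\inf_n\sigma_n$. With the previous step this gives $\overline{\dim}_B(C)\le\sigma_\infty$.

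For the lower bound $\dim_H(C)\ge\sigma_\infty$ — the crux — fix $\epsilon>0$, so $P(\sigma_\infty-\epsilon)>0$. The plan is to extract, for a suitably large $m$, a finite prefix-free code $D\subseteq J_m$ with $D^{\mathbb N}\subseteq J$ and $\sum_{w\in D}r_w^{\sigma_\infty-\epsilon}\ge1$. Then $\mathcal{G}=\{X; f_w:w\in D\}$ is a genuine IFS of contracting infinitesimal similitudes whose attractor $B=\pi(D^{\mathbb N})$ satisfies $B\subseteq C$, which is one-to-one on $B$ (compositions of maps one-to-one on $A$, and $B\subseteq A$), and which inherits enough separation from the open set $U$ of the SOSC for $\mathcal{F}$ — the nesting $f_w(U)\subseteq U$ passes to compositions, and prefix-freeness of $D$ together with the disjointness $f_i(U)\cap f_j(U)=\emptyset$ controls the overlaps — to apply either Theorem \ref{mainthm} or, since $\mathcal{F}$ is only assumed to satisfy the SOSC, its measure-separated counterpart built on Theorem \ref{mzero}, Lemma \ref{new4} and the mass-distribution/$\beta$-space theorem. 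Consequently $\dim_H(C)\ge\dim_H(B)\ge\sigma_\infty-\epsilon$; letting $\epsilon\downarrow0$ yields $\dim_H(C)\ge\sigma_\infty$, and with the upper bound, $\dim_H(C)=\underline{\dim}_B(C)=\overline{\dim}_B(C)=\sigma_\infty$.

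The main obstacle is exactly the construction of the internal code $D$ whose critical exponent approximates $\sigma_\infty$, and the careful transfer of the SOSC/measure-separated property of $\mathcal{F}$ to $\mathcal{G}$ so that the mass-distribution estimates go through. A clean route to $D$ is to first approximate $J$ from inside by a subshift of finite type carrying pressure within $\epsilon/2$ of $P$, pass to an irreducible component carrying essentially that pressure, and take for $D$ the length-$m$ first-return words at a fixed symbol, with $m$ large. An alternative for the lower bound avoiding sub-IFS's is to construct directly an equilibrium measure for the potential $\sigma_\infty\log(Df)$ on $J$, push it forward to $C$, and run the mass-distribution argument against $U$ as in the proof of the $H^{\sigma_0}(A)>0$ theorem; there the delicate point is showing the push-forward gives $\partial U$ zero mass, so that the Theorem \ref{mzero}-type comparison applies.
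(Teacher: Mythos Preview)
The paper does not actually prove this theorem: the entire proof reads ``Proof is left to the reader,'' with the subsequent remarks pointing to Falconer's sub-self-similar paper and Liu's sub-self-conformal paper as models and noting that the techniques of the present paper also apply. There is therefore no concrete argument to compare yours against.

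Your outline is precisely the standard Falconer--Liu route, adapted to the infinitesimal-similitude setting via the tools developed earlier in the paper (Lemma~\ref{Chain_lemma}, Lemma~\ref{new1}, the Hausdorff-equals-box theorem for attractors, Theorem~\ref{mainthm}), which is exactly what the remarks invite. The upper bound via $C\subseteq A_n$ and $\overline{\dim}_B(A_n)=\dim_H(A_n)\le\sigma_n$ is clean; the subadditivity/pressure argument for $\sigma_n\to\sigma_\infty=\inf_n\sigma_n$ is standard and goes through with the bounded distortion of Lemma~\ref{new1}. You correctly isolate the lower bound as the only substantive step, and both of your proposed attacks (inner approximation of $J$ by an irreducible subshift of finite type followed by a first-return code; or an equilibrium/Gibbs measure pushed to $C$ and a mass-distribution argument against $U$) are exactly the mechanisms in the cited references.

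One point deserves care if you run the sub-IFS route. To apply Theorem~\ref{mainthm} to $\mathcal{G}=\{f_w:w\in D\}$ you need the \emph{strong} OSC for $\mathcal{G}$, i.e.\ $U\cap B\ne\emptyset$ where $B$ is the attractor of $\mathcal{G}$, not merely OSC. The nesting $f_w(U)\subseteq U$ and the disjointness of the $f_w(U)$ for distinct $w\in D\subseteq I^m$ are automatic from the SOSC of $\mathcal{F}$, but $U\cap B\ne\emptyset$ is not, and in the general metric setting OSC and SOSC are not equivalent (as the paper itself emphasises). You must arrange this when building $D$: once you pass to an irreducible SFT inside $J$, anchor your first-return code at a symbol whose cylinder meets the word $k$ with $A_k\subset U$ furnished by the SOSC of $\mathcal{F}$, so that $B$ picks up a piece of $A_k$. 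With that adjustment your plan is complete and in line with what the paper intends.
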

\begin{proof}
Proof is left to the reader.
\end{proof}
\begin{remark}
The above theorem can be compared with \cite[Theorem $3.5$]{Fal3} and \cite[Theorem $8$]{Liu}. 
\end{remark}
\begin{remark}
Although the techniques in \cite{Fal3} and \cite{Liu} are different from that of ours, we can also follow the techniques of \cite{Fal3} and \cite{Liu} to get the result. However, we should emphasize that our technique is in more generalized form.
\end{remark}
In \cite{Fal3}, Falconer asked a question whether equality between the Hausdorff and box dimension of a sub-self-similar set holds without the OSC.
The following example conveys that equality between the Hausdorff and box dimension may not hold for sub-self-infinitesimal sets.  
\begin{example}
Let $\mathcal{F}=\{[0,1]; f_1, f_2\}$ be an IFS such that $f_1(x)=\frac{x}{1+x}$ and $f_2(x)=\frac{x+2}{3}.$ Note that $f_1$ is not a contractive similarity map but both $f_1$ and $f_2$ are infinitesimal similitude on $[0,1].$ The compact set $E=\{0,1,\frac{1}{2},\frac{1}{3}, \dots\}$ satisfies the relation $E \subseteq f_1(E) \cup f_2(E).$ Furthermore, $\dim_H(E)=0$ and $\dim_B(E)=\frac{1}{2}.$ 
\end{example} 

\section{Some corrections}
\subsection{Self-similar sets}
In \cite{SS}, Simon and Solomyak remarked that the proof of Proposition $2$ appeared in \cite{Bandt} contains an error. We notice that Schief \cite{Schief2} used the same proposition to prove Theorem $2.9$ of his paper. More precisely, he used the aforementioned proposition in part-3 of the proof. However, the theorem is correct but needs some modifications in its proof. We shall use the following lemmas to correct the proof of the aforesaid theorem. These lemmas can be treated as a generalization of \cite[Proposition $3$]{Bandt} to complete metric spaces.

\begin{lemma}\label{limdis}
Let $\{X; f_1,f_2, \dots, f_N\}$ be an IFS consists of similarity maps with similarity ratio $s_i$ and $A$ be the associated attractor. Let $\sigma_0$ be the similarity dimension of the attractor $A$, that is, $\sum_{i=1}^{N} s_i^{\sigma_0}=1$. Then we have $ \mathcal{H}^{\sigma_0}(f_i(A) \cap f_j(A) ) = 0$ for every incomparable $i, j \in I^*.$ 
\end{lemma}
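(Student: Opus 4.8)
The plan is to derive the claim from the finiteness of $\mathcal{H}^{\sigma_0}(A)$ together with the exact scaling $\mathcal{H}^{\sigma_0}(f_i(E))=s_i^{\sigma_0}\,\mathcal{H}^{\sigma_0}(E)$, which holds because each $f_i$, and hence each composition $f_i=f_{i_1}\circ\cdots\circ f_{i_n}$ (a similarity of ratio $s_i:=s_{i_1}\cdots s_{i_n}$), is an exact similarity. First I would prove $\mathcal{H}^{\sigma_0}(A)<\infty$. Iterating $A=\bigcup_{m=1}^N f_m(A)$ gives $A=\bigcup_{i\in I^n}A_i$ for every $n$, and since $|A_i|=s_i|A|$,
\[
\sum_{i\in I^n}|A_i|^{\sigma_0}=|A|^{\sigma_0}\sum_{i\in I^n}s_i^{\sigma_0}=|A|^{\sigma_0}\Big(\sum_{m=1}^N s_m^{\sigma_0}\Big)^{n}=|A|^{\sigma_0},
\]
using the defining relation $\sum_m s_m^{\sigma_0}=1$. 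Since $\max_{i\in I^n}|A_i|\le(\max_m s_m)^n|A|\to 0$, the families $\{A_i:i\in I^n\}$ are admissible covers at scales tending to $0$, so $\mathcal{H}^{\sigma_0}(A)\le|A|^{\sigma_0}<\infty$. This is the one place where the hypothesis is an equality (not merely $\le 1$) and the maps are genuine similarities.

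If $\mathcal{H}^{\sigma_0}(A)=0$ there is nothing to prove, as $f_i(A)\cap f_j(A)\subseteq A$; so assume $0<\mathcal{H}^{\sigma_0}(A)<\infty$ and let $\mu$ be the finite Borel measure $\mu(E)=\mathcal{H}^{\sigma_0}(E\cap A)$ (legitimate since $\mathcal{H}^{\sigma_0}$ is a metric outer measure and the $A_i$ are compact, hence measurable). By the scaling property $\mu(A_i)=s_i^{\sigma_0}\,\mathcal{H}^{\sigma_0}(A)$, so for each fixed $n$,
\[
\sum_{i\in I^n}\mu(A_i)=\mathcal{H}^{\sigma_0}(A)\sum_{i\in I^n}s_i^{\sigma_0}=\mathcal{H}^{\sigma_0}(A)=\mu(A)=\mu\Big(\bigcup_{i\in I^n}A_i\Big).
\]
For a finite measure this equality forces every pairwise overlap to be null: for fixed distinct $p,q\in I^n$, combining $\mu(A_p)+\mu(A_q)=\mu(A_p\cup A_q)+\mu(A_p\cap A_q)$ with subadditivity applied to the remaining terms yields $\sum_{i\in I^n}\mu(A_i)-\mu\big(\bigcup_{i\in I^n}A_i\big)\ge\mu(A_p\cap A_q)$, hence $\mathcal{H}^{\sigma_0}(A_p\cap A_q)=0$ for all distinct $p,q\in I^n$.

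It remains to drop the equal-length restriction. Let $i\in I^m$ and $j\in I^n$ be incomparable, say $m\le n$. Iterating invariance inside $A$ gives $A_i=f_i(A)=\bigcup_{l\in I^{\,n-m}}A_{il}$, and the words $il$ run exactly over the elements of $I^n$ having $i$ as a prefix; since $i$ is not a prefix of $j$, each such $il$ differs from $j$. Therefore
\[
A_i\cap A_j=\bigcup_{l\in I^{\,n-m}}\big(A_{il}\cap A_j\big)\subseteq\bigcup_{\substack{p,q\in I^n\\ p\ne q}}\big(A_p\cap A_q\big),
\]
and the right-hand side is $\mathcal{H}^{\sigma_0}$-null by the previous paragraph, so $\mathcal{H}^{\sigma_0}(f_i(A)\cap f_j(A))=0$. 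The only genuinely substantive step is the finiteness bound of the first paragraph; the rest is bookkeeping with the additivity of a finite measure.
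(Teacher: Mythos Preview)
Your proof is correct and rests on the same identity the paper uses, namely
\[
\mathcal{H}^{\sigma_0}\Big(\bigcup_{i} f_i(A)\Big)=\mathcal{H}^{\sigma_0}(A)=\sum_i s_i^{\sigma_0}\,\mathcal{H}^{\sigma_0}(A)=\sum_i \mathcal{H}^{\sigma_0}(f_i(A)),
\]
from which nullity of the pairwise overlaps follows. Your write-up is in fact more complete than the paper's one-line argument: you explicitly establish $\mathcal{H}^{\sigma_0}(A)\le |A|^{\sigma_0}<\infty$ (without which the displayed chain is $\infty=\infty$ and yields nothing), you dispose of the trivial case $\mathcal{H}^{\sigma_0}(A)=0$, and you carry the conclusion from equal-length words to arbitrary incomparable $i,j\in I^*$ via the decomposition $A_i=\bigcup_{l\in I^{\,n-m}}A_{il}$---all steps the paper leaves to the reader.
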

\begin{proof}
It suffices to see the following: $$ \mathcal{H}^{\sigma_0}\Big(\cup_{i=1}^N f_i(A)\Big)= \mathcal{H}^{\sigma_0}(A)= \mathcal{H}^{\sigma_0}(A) \sum_{i=1}^{N} s_i^{\sigma_0}= \sum_{i=1}^{N} \mathcal{H}^{\sigma_0}(f_i(A)).$$
\end{proof}
\begin{lemma}
Let $\{X; f_1,f_2, \dots, f_N\}$ be an IFS consists of similarity maps with similarity ratio $s_i$ and $A$ be the associated attractor. Let $\sigma_0$ be the similarity dimension of the attractor $A$. Then the $\sigma_0-$dimensional Hausdorff measure $\mathcal{H}^{\sigma_0}$ satisfies the following: If $C \subseteq A,$ then $\mathcal{H}^{\sigma_0}(C)=\nu(C)$, where the outer measure $\nu$ is defined by $$\nu(C)= \inf\Big\{\sum_{i=1}^{\infty} |U_i|^{\sigma_0}: U_i~ \text{are open sets with} ~~ C \subseteq \cup_{i=1}^{\infty}U_i \Big\}.$$  
\end{lemma}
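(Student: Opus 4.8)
The plan is to prove $\nu(C)\le\mathcal{H}^{\sigma_0}(C)$ and $\mathcal{H}^{\sigma_0}(C)\le\nu(C)$ in turn; only the second uses the self-similar structure. The first is a general fact and needs no dynamics: restricting to \emph{open} covering sets does not change the value of $\mathcal{H}^{\sigma_0}$ (given a cover of mesh $<\delta$, replace each member $V$ by the open set $\{x:d(x,V)<\eta|V|\}$, of diameter at most $(1+2\eta)|V|$, and let $\eta\downarrow 0$), while allowing covering sets of arbitrarily large diameter only decreases the infimum; hence $\nu(C)\le\mathcal{H}^{\sigma_0}(C)$ for every $C\subseteq X$.

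For the reverse inequality I would pass to the natural measure and invoke the mass distribution principle. Since $\{f_k(A):k\in I^m\}$ covers $A$ with $\sum_{k\in I^m}|f_k(A)|^{\sigma_0}=|A|^{\sigma_0}\big(\sum_{i=1}^{N}s_i^{\sigma_0}\big)^m=|A|^{\sigma_0}$ and with mesh tending to $0$, we get $\mathcal{H}^{\sigma_0}(A)\le|A|^{\sigma_0}<\infty$, so the set function $\mu(E):=\mathcal{H}^{\sigma_0}(A\cap E)$ is a finite Borel measure. By Lemma \ref{limdis} the cylinders $f_1(A),\dots,f_N(A)$ have pairwise $\mathcal{H}^{\sigma_0}$-null intersections, so $\mu(E)=\sum_{i=1}^{N}\mathcal{H}^{\sigma_0}(f_i(A)\cap E)=\sum_{i=1}^{N}s_i^{\sigma_0}\,\mu(f_i^{-1}(E))$; that is, $\mu$ is the self-similar measure with weights $s_i^{\sigma_0}$. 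If $\mathcal{H}^{\sigma_0}(A)=0$ both sides of the desired identity vanish (by the first paragraph), so assume $0<\mathcal{H}^{\sigma_0}(A)<\infty$. The point is then to establish the density bound
\[
 \mathcal{H}^{\sigma_0}(A\cap V)\ \le\ |V|^{\sigma_0}\qquad\text{for every }V\subseteq X .
\]
Granting it, for any open cover $\{U_i\}$ of $C\subseteq A$ one has $\mathcal{H}^{\sigma_0}(C)=\mu(C)\le\sum_i\mu(U_i)\le\sum_i|U_i|^{\sigma_0}$ by the mass distribution principle, hence $\mathcal{H}^{\sigma_0}(C)\le\nu(C)$.

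The density bound is the heart of the matter. By inner regularity of $\mu$ it is enough to treat a compact $K$ with $|K|<|A|$ (otherwise $|K|^{\sigma_0}\ge|A|^{\sigma_0}\ge\mathcal{H}^{\sigma_0}(A)$). Writing $K^{(k)}:=f_k^{-1}(f_k(A)\cap K)\subseteq A$ and iterating the self-similar identity for $\mu$ along a cross-section $\Gamma_b$ with $b$ slightly above $|K|/|A|$, Lemma \ref{limdis} gives
\[
 \mathcal{H}^{\sigma_0}(A\cap K)=\sum_{k\in\Gamma_b}s_k^{\sigma_0}\,\mathcal{H}^{\sigma_0}(A\cap K^{(k)}),
\]
where each $K^{(k)}$ has diameter at most $|A|$ and $s_k^{\sigma_0}|K^{(k)}|^{\sigma_0}=|f_k(A)\cap K|^{\sigma_0}\le|K|^{\sigma_0}$. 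I would then argue by extremality: set $\kappa:=\sup\{\mathcal{H}^{\sigma_0}(A\cap E)/|E|^{\sigma_0}:E\subseteq X,\ \mathcal{H}^{\sigma_0}(A\cap E)>0\}$, show $\kappa<\infty$, feed the displayed recursion into the definition of $\kappa$, and drive the distinguished set up in diameter until it is comparable to $|A|$, where the quotient is at most $\mathcal{H}^{\sigma_0}(A)/|A|^{\sigma_0}\le 1$; this should force $\kappa\le 1$, which is the asserted bound.

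The hard part is precisely the bookkeeping for the cylinders that straddle the boundary of $E$ — those for which $K^{(k)}$ is not a faithful rescaled copy of all of $A$ — and keeping their cumulative contribution small enough that the \emph{sharp} constant $1$ survives passing to the limit; a separate, minor point is the finiteness of the cross-sections and of $\kappa$, which both follow from $\sum_i s_i^{\sigma_0}=1$ and uniform contractivity. This is exactly the gap present in the argument of \cite{Bandt}: Lemma \ref{limdis} is what supplies the measure-theoretic disjointness of overlapping cylinders needed to close the renormalization, and the proof here is a translation of (the corrected form of) that argument to a complete metric space.
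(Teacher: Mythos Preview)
Your easy direction $\nu\le\mathcal{H}^{\sigma_0}$ is fine and matches the paper. For the hard direction, however, you have set yourself a much harder task than necessary, and the argument you sketch does not close.

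The paper's proof is elementary and needs no density bound at all. First it reduces to $C=A$: once $\mathcal{H}^{\sigma_0}(A)=\nu(A)$ is known, for any $C\subseteq A$ one has
\[
\mathcal{H}^{\sigma_0}(A)=\mathcal{H}^{\sigma_0}(C)+\mathcal{H}^{\sigma_0}(A\setminus C)\ \ge\ \nu(C)+\nu(A\setminus C)\ \ge\ \nu(A)=\mathcal{H}^{\sigma_0}(A),
\]
forcing equality throughout. For $C=A$, given any open cover $\{U_i\}$ of $A$ and any $\epsilon>0$, choose $n$ so large that $s_j<\epsilon\,\bigl|\bigcup_i U_i\bigr|^{-1}$ for every $j\in I^n$; then $\{f_j(U_i):j\in I^n,\ i\}$ is a cover of $A=\bigcup_{j\in I^n}f_j(A)$ of mesh $<\epsilon$, and since each $f_j$ is a similarity,
\[
\sum_{j\in I^n}\sum_i|f_j(U_i)|^{\sigma_0}=\Bigl(\sum_{j\in I^n}s_j^{\sigma_0}\Bigr)\sum_i|U_i|^{\sigma_0}=\sum_i|U_i|^{\sigma_0}.
\]
Hence $\mathcal{H}^{\sigma_0}_\epsilon(A)\le\sum_i|U_i|^{\sigma_0}$ for every $\epsilon$, giving $\mathcal{H}^{\sigma_0}(A)\le\nu(A)$. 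No extremality argument, no mass distribution principle, no Lemma~\ref{limdis}.

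Your route instead tries to prove the sharp bound $\mathcal{H}^{\sigma_0}(A\cap V)\le|V|^{\sigma_0}$ first and deduce the lemma from it. But the renormalization you outline does not force $\kappa\le 1$: from the displayed identity together with $\mathcal{H}^{\sigma_0}(A\cap K^{(k)})\le\kappa\,|K^{(k)}|^{\sigma_0}$ you only obtain $\mathcal{H}^{\sigma_0}(A\cap K)\le\kappa\sum_{k\in\Gamma_b}|f_k(A)\cap K|^{\sigma_0}$, and when $K$ meets several cylinders this sum can exceed $|K|^{\sigma_0}$; iterating does not help, since boundary cylinders proliferate at every scale. You correctly flag this as ``the hard part,'' but Lemma~\ref{limdis} only gives measure-disjointness of cylinders, not the needed subadditivity in $|K|^{\sigma_0}$. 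In fact the paper runs the implication in the opposite direction: the present lemma, proved by the two-line refinement trick above, is what is used in the \emph{next} theorem to obtain $\mathcal{H}^{\sigma_0}(V)\le|V|^{\sigma_0}$ for open $V\subseteq A$, not conversely.
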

\begin{proof}
We shall prove the result for $C=A.$ Assuming it, for each $C \subseteq A$ one can establish 
\begin{equation}
\begin{aligned}
\mathcal{H}^{\sigma_0}(A)& = \mathcal{H}^{\sigma_0}(C)+\mathcal{H}^{\sigma_0}(A \backslash C)\\ & \ge 
\nu(C)+\nu(A \backslash C)\\ & \ge \nu(A) \\ & = \mathcal{H}^{\sigma_0}(A),
\end{aligned}
\end{equation}
this completes the proof of the theorem. It remains to show that $\mathcal{H}^{\sigma_0}(A)=\nu(A).$ Thanks to the definitions of $\nu$ and $\mathcal{H}^{\sigma_0}$, we get $\mathcal{H}^{\sigma_0}(A) \ge \nu(A).$ For reverse inequality it is enough to construct a covering $\{V_k\}$ such that $|V_k| < \epsilon $ and $\sum_{k} |V_j|^{\sigma_0}= \sum_{i} |U_i|^{\sigma_0}$ for a given open covering $\{U_i\}$ of $A$ and for each $\epsilon>0.$ Select $n$ with $ s_j < \epsilon \big|\cup_{i=1}^{\infty}U_i\big|$ for each $j \in I^n$, we obtain a cover $\{f_j(U_i): j \in I^n, i\}$ for $A$. This cover serves our purpose. 
\end{proof}
The notation in the next theorem are taken from \cite{Schief2}.
\begin{theorem}[\cite{Schief2}, Theorem $2.9$]
If $\mathcal{H}^{\sigma_0}(A) > 0$ then $\gamma_{\epsilon} < \infty$ for arbitrary $\epsilon.$
\end{theorem}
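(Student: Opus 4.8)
The plan is to reduce the statement to a uniform local upper‑regularity estimate for the natural self‑similar measure, and then to combine it with the size comparability of the cylinders $A_i$, $i\in\Gamma(j)$, established above.

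First I would record that $\mathcal{H}^{\sigma_0}(A)$ is finite: covering $A$ by the cylinders $A_i$, $i\in I^n$, gives $\sum_{i\in I^n}\abs{A_i}^{\sigma_0}=\abs{A}^{\sigma_0}\big(\sum_{i=1}^{N}s_i^{\sigma_0}\big)^{n}=\abs{A}^{\sigma_0}$, whence $\mathcal{H}^{\sigma_0}(A)\le\abs{A}^{\sigma_0}$. Together with the hypothesis $\mathcal{H}^{\sigma_0}(A)>0$, the normalized measure $\mu:=\mathcal{H}^{\sigma_0}(A)^{-1}\,\mathcal{H}^{\sigma_0}|_{A}$ is a Borel probability measure on $A$. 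By Lemma \ref{limdis}, $\mu(A_i\cap A_j)=0$ for incomparable $i,j\in I^*$, and since similitudes scale Hausdorff measure exactly, $\mu(A_i)=s_i^{\sigma_0}$ and $\mu\big(f_i(E)\big)=s_i^{\sigma_0}\mu(E)$ for every Borel $E\subseteq A$ and $i\in I^*$; in particular $\sum_{i\in\Gamma_b}s_i^{\sigma_0}=1$ for each $0<b<1$, the family $\{A_i:i\in\Gamma_b\}$ being a $\mu$‑essentially disjoint cover of $A$.

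The core of the argument is the regularity bound $\mu\big(B(x,r)\big)\le C^{*}r^{\sigma_0}$ for all $x\in A$ and $0<r\le r^{*}$, with $C^{*},r^{*}>0$ independent of $x$. This is where the density theory enters. When the ambient space has a doubling (equivalently, $\beta$‑space) property it is elementary: a ball of radius $\asymp b$ meets only boundedly many cylinders $A_l$, $l\in\Gamma_b$, each of $\mu$‑mass $\asymp b^{\sigma_0}$, so one gets $\mu\big(B(x,b)\big)\le Cb^{\sigma_0}$ at once. In a general complete metric space I would instead invoke the classical density estimate for Hausdorff measures — whose proof needs only the $5r$‑covering lemma, hence is valid here — to produce $c_d,\rho_0>0$ and a set $E_0\subseteq A$ of positive $\mu$‑measure with $\mu\big(B(x,r)\big)\le c_d r^{\sigma_0}$ for $x\in E_0$, $0<r\le\rho_0$; transporting this along the maps $f_l$ via $\mu\big(f_l(E)\big)=s_l^{\sigma_0}\mu(E)$ shows that for every $b\in(0,1)$ each cylinder $A_l$ with $l\in\Gamma_b$ contains the set $f_l(E_0)$ of positive mass whose points satisfy $\mu\big(B(\cdot,r)\big)\le c_d r^{\sigma_0}$ for $0<r\le s_{\min}\rho_0\,b$, and since $\abs{A_l}\le b\abs{A}$ one locates near any $x\in A$ such a point within distance $C'b$ for a fixed $C'$. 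Upgrading this to the uniform bound $\mu\big(B(x,r)\big)\le C^{*}r^{\sigma_0}$ is the delicate point; here one uses the open‑cover identity $\mathcal{H}^{\sigma_0}=\nu$ on subsets of $A$ proved just above to pass to covers adapted to the cylinders, and it is precisely at this step that the erroneous appeal to Bandt's proposition in Schief's original argument must be replaced. I expect this passage to be the main obstacle of the proof.

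Granting the regularity bound, the count is short. Fix $j\in I^*$ with $s_j$ small enough that $C_2 s_j\le r^{*}$, where $C_2:=c_2\epsilon+(1+c)\abs{A}$ and $c$ is the comparability constant of the Proposition above, so that $c^{-1}\le r_j/r_i\le c$ for $i\in\Gamma(j)$ (in the present setting $r_i=s_i$). Since $A_i\cap O_j\neq\emptyset$, $O_j\subseteq B(A_j,c_2\epsilon s_j)$ by \eqref{eqn2.9}, and $\abs{A_i}=s_i\abs{A}\le c\,s_j\abs{A}$, all the sets $A_i$ with $i\in\Gamma(j)$ lie in a single ball $B^{*}_j=B(z_j,C_2 s_j)$ with $z_j\in A_j\subseteq A$. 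Moreover $\Gamma(j)\subseteq\Gamma_{\abs{O_j}}$, so its elements are pairwise incomparable, the sets $A_i$ are pairwise $\mu$‑null‑overlapping, and $\mu(A_i)=s_i^{\sigma_0}\ge c^{-\sigma_0}s_j^{\sigma_0}$; hence
\[
\#\Gamma(j)\,c^{-\sigma_0}s_j^{\sigma_0}\ \le\ \sum_{i\in\Gamma(j)}\mu(A_i)\ =\ \mu\Big(\bigcup_{i\in\Gamma(j)}A_i\Big)\ \le\ \mu\big(B^{*}_j\big)\ \le\ C^{*}(C_2 s_j)^{\sigma_0},
\]
so that $\#\Gamma(j)\le C^{*}C_2^{\sigma_0}c^{\sigma_0}$. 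As $\Gamma(j)\subseteq\Gamma_{\abs{O_j}}$ is finite for every $j$ and there are only finitely many $j$ with $s_j>r^{*}/C_2$ (since $s_j\le(\max_i s_i)^{\abs{j}}$), we conclude $\gamma_\epsilon=\sup_{j\in I^*}\#\Gamma(j)<\infty$; since $\epsilon>0$ was arbitrary, the theorem follows.
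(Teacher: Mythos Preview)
Your overall counting strategy---bound the measure of the region containing the cylinders $A_i$, $i\in\Gamma(j)$, from above by a power of its diameter, and from below by $\#\Gamma(j)$ times a uniform lower bound on $\mu(A_i)$---is exactly what the paper does. The difference lies entirely in how the upper ``regularity'' bound is obtained, and here you have made the argument far harder than necessary and left it incomplete.

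You treat the inequality $\mu\big(B(x,r)\big)\le C^{*}r^{\sigma_0}$ as the main obstacle and propose reaching it through density estimates for Hausdorff measure, the $5r$-covering lemma, and a transport-and-upgrade procedure that you yourself flag as delicate and unfinished. But the open-cover identity $\mathcal{H}^{\sigma_0}=\nu$ on subsets of $A$ (the lemma immediately preceding the theorem) already \emph{is} the regularity bound: for any open $V\subseteq A$ one takes the single-set cover $\{V\}$ in the definition of $\nu$ and reads off $\mathcal{H}^{\sigma_0}(V)\le\abs{V}^{\sigma_0}$. That is the entirety of the paper's first step. No density theorem, no $5r$-lemma, no propagation along the IFS is needed; the corrected proof replaces Schief's appeal to Bandt's proposition precisely by this one-line consequence of the lemma, together with Lemma~\ref{limdis} for the essential disjointness. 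Once you have $\mathcal{H}^{\sigma_0}(V)\le\abs{V}^{\sigma_0}$, your counting paragraph (or the paper's version of it, which packages the containing region as a neighbourhood of diameter $\asymp b$) finishes the job immediately.

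A small separate error: the inclusion $\Gamma(j)\subseteq\Gamma_{\abs{O_j}}$ you invoke is false for $\abs{j}\ge 2$, since the type-$\mathcal{B}$ elements $ki$ with $i\in\Gamma(j')$ need not lie in $\Gamma_{\abs{O_{kj'}}}$. The pairwise incomparability of the elements of $\Gamma(j)$ that you actually need does hold, but it follows from the inductive construction of $\mathcal{B}$ and $\mathcal{C}$, not from membership in a single stopping set $\Gamma_b$.
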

\begin{proof}
By the previous lemma, we note the following:
 $\mathcal{H}^{\sigma_0}(V) \le |V|^{\sigma_0}$ for any open set $V$ of $A$.
In view of Lemma \ref{limdis}, the above gives
\begin{equation*}
\begin{aligned}
\mathcal{I}(k) \mathcal{H}^{\sigma_0}(A) b^{\sigma_0} s_{\min}^{\sigma_0} & \le \sum_{j \in \mathcal{I}(k)}\mathcal{H}^{\sigma_0}(A_j)\\ & \le  \mathcal{H}^{\sigma_0}\Big(U\big(\max_{j \in \mathcal{I}(k)}|A_j|,G_k \big) \cap A\Big)\\ & \le 
(b+2b |K|)^{\sigma_0}. 
\end{aligned}
\end{equation*}
This yields $\gamma_{\epsilon} \le \frac{(1+2 |A|)^{\sigma_0}}{\mathcal{H}^{\sigma_0}(A) s_{\min}^{\sigma_0}}.$
\end{proof}
\begin{proposition}[\cite{Bandt}, Proposition $2$]
If $\dim(A)= \sigma_0$ then $\mu(A_i \cap A_j)=0$ for every $i \ne j.$
\end{proposition}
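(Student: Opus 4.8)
The plan is to identify the invariant measure $\mu$ with a normalized restriction of $\sigma_0$-dimensional Hausdorff measure to $A$, and then to read off the conclusion directly from Lemma \ref{limdis}. Recall that $\mu$ is the invariant probability measure, i.e. the unique fixed point of $\nu\mapsto\sum_{i=1}^N s_i^{\sigma_0}\,\nu\circ f_i^{-1}$; equivalently $(L_{\sigma_0})^*\mu=\mu$, since for similarity maps $(Df_i)\equiv s_i$ and $r(L_{\sigma_0})=\sum_{i=1}^N s_i^{\sigma_0}=1$.

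First I would record that $0<\mathcal{H}^{\sigma_0}(A)<\infty$. Finiteness is immediate: $A$ is open in itself, so the inequality $\mathcal{H}^{\sigma_0}(V)\le|V|^{\sigma_0}$ established above for every relatively open $V\subseteq A$ gives $\mathcal{H}^{\sigma_0}(A)\le|A|^{\sigma_0}<\infty$. Positivity is where the hypothesis $\dim_H(A)=\sigma_0$ enters, guaranteeing $\mathcal{H}^{\sigma_0}(A)>0$ for this self-similar set. Granting $0<\mathcal{H}^{\sigma_0}(A)<\infty$, define the Borel probability measure $\tilde\mu(B):=\mathcal{H}^{\sigma_0}(B\cap A)/\mathcal{H}^{\sigma_0}(A)$.

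The core step is to verify that $\tilde\mu$ satisfies the self-similarity identity, so that $\tilde\mu=\mu$ by uniqueness of the invariant measure. Fix a Borel set $B$ and write $B\cap A=\bigcup_{i=1}^N(B\cap A_i)$. Since $f_i$ maps $A$ onto $A_i$ we have $f_i\big(f_i^{-1}(B)\cap A\big)=B\cap A_i$, and since $f_i$ is a similarity of ratio $s_i$ the scaling law of Hausdorff measure gives $\mathcal{H}^{\sigma_0}(B\cap A_i)=s_i^{\sigma_0}\,\mathcal{H}^{\sigma_0}\big(f_i^{-1}(B)\cap A\big)$. The pairwise overlaps satisfy $B\cap A_i\cap A_j\subseteq A_i\cap A_j$, which is $\mathcal{H}^{\sigma_0}$-null by Lemma \ref{limdis}, so $\mathcal{H}^{\sigma_0}$ is finitely additive over the sets $B\cap A_i$ and hence
$$\mathcal{H}^{\sigma_0}(B\cap A)=\sum_{i=1}^N \mathcal{H}^{\sigma_0}(B\cap A_i)=\sum_{i=1}^N s_i^{\sigma_0}\,\mathcal{H}^{\sigma_0}\big(f_i^{-1}(B)\cap A\big).$$
Dividing by $\mathcal{H}^{\sigma_0}(A)$ gives $\tilde\mu=\sum_{i=1}^N s_i^{\sigma_0}\,\tilde\mu\circ f_i^{-1}$, whence $\tilde\mu=\mu$.

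It then suffices to apply the resulting identity $\mu(B)=\mathcal{H}^{\sigma_0}(B\cap A)/\mathcal{H}^{\sigma_0}(A)$ with $B=A_i\cap A_j$ for incomparable $i,j\in I^*$: Lemma \ref{limdis} gives $\mathcal{H}^{\sigma_0}(A_i\cap A_j)=0$, hence $\mu(A_i\cap A_j)=0$. I expect the genuine obstacle to be the positivity $\mathcal{H}^{\sigma_0}(A)>0$: the scaling and finite-additivity manipulations are routine once Lemma \ref{limdis} is available, but extracting $\mathcal{H}^{\sigma_0}(A)>0$ from $\dim_H(A)=\sigma_0$ for a self-similar set in a general complete metric space is the delicate point, and is precisely where Bandt--Graf's original argument needed repair; the auxiliary lemmas identifying $\mathcal{H}^{\sigma_0}$ with the open-cover outer measure $\nu$ on subsets of $A$, and bounding $\mathcal{H}^{\sigma_0}(V)\le|V|^{\sigma_0}$, are what make that step rigorous.
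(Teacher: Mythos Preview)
The paper does not prove this proposition. It appears in the ``Some corrections'' section as a statement quoted from \cite{Bandt} whose original proof Simon--Solomyak noted to be flawed, and the paper follows it immediately with an example purporting to show the conclusion can fail; there is no argument in the paper to compare your proof against.

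Your proposal has a genuine gap exactly where you yourself flag it: you assert that $\dim_H(A)=\sigma_0$ ``guarantees'' $\mathcal{H}^{\sigma_0}(A)>0$ for the self-similar set $A$, but you never justify this, and the implication is not available in general. The two auxiliary lemmas you cite (identifying $\mathcal{H}^{\sigma_0}$ with the open-cover outer measure $\nu$ on subsets of $A$, and the bound $\mathcal{H}^{\sigma_0}(V)\le|V|^{\sigma_0}$ for relatively open $V\subseteq A$) yield \emph{finiteness} of $\mathcal{H}^{\sigma_0}(A)$, not positivity; nothing in the paper supplies the missing direction. This is precisely the defect in Bandt--Graf's original argument that the paper is drawing attention to: they too passed from $\dim_H(A)=\sigma_0$ to $\mathcal{H}^{\sigma_0}(A)>0$ without justification. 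Once $0<\mathcal{H}^{\sigma_0}(A)<\infty$ is \emph{assumed}, your identification $\mu=\mathcal{H}^{\sigma_0}(\,\cdot\cap A)/\mathcal{H}^{\sigma_0}(A)$ via Lemma~\ref{limdis} and the scaling law is correct, but that proves a statement with a stronger hypothesis, not the proposition as written.
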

\begin{example}
Let $\big\{[0,1]; f_1(x)=\frac{x}{2},f_2(x)=\frac{x}{3}\big\}$ be an IFS. Check that the attractor $A=\{0\}$, and $\dim_H(A)=0.$ But $\mu(A_1 \cap A_2)=1$ contradicting the above proposition. 
\end{example}
\begin{example}
Let $\big\{[0,1]; f_1(x)=1,f_2(x)=\frac{x}{2}\big\}$ an IFS. Check that the attractor $A=\{0,1, \frac{1}{2},\frac{1}{4}, \dots\}$, and $\dim_H(A)=0.$ 
\end{example}
\subsection{Generalized Graph-Directed Constructions}
Equivalent to the SOSC for an IFS there is a notion of graph-directed SOSC, the interested reader can consult \cite[Definition $6.7.1$]{Edgar}. 
Here we made a modification in \cite[Theorem $4.17$]{Nussbaum1}. In particular, we have the following.
\begin{theorem}\label{GGDC}
Let us consider the GGDC with the assumptions $H4.1,~H4.2,~H4.3$ and $H4.4$ as in \cite{Nussbaum1}. 
Suppose $f_{(j,e)}|_{C_j}$ is injective for every $(j,e) \in \Gamma$ and that the GGDC satisfies the graph-directed SOSC. Then the Hausdorff dimension of each $C_i$ is equal to $r(L_{\sigma_0})$ for all $1 \le i \le p$. 
\end{theorem}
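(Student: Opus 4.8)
The plan is to reduce Theorem \ref{GGDC} to the strong-separation version \cite[Theorem $4.17$]{Nussbaum1} in exactly the way Theorem \ref{mainthm} was reduced to Theorem \ref{thm-nussbaum}. Fix a vertex $i$ and let $U_1,\dots,U_p$ be the open sets furnished by the graph-directed SOSC, so that $U_i\cap C_i\ne\emptyset$, the map along any edge from $i$ to $j$ carries $U_j$ into $U_i$ (with the orientation convention of \cite{Nussbaum1}), and the images of the $U$'s along distinct edges leaving a common vertex are pairwise disjoint. Assumptions $H4.1$--$H4.4$ guarantee that the cylinder sets $C_{i,w}:=f_w(C_{t(w)})$ over admissible paths $w$ starting at $i$ (here $t(w)$ is the terminal vertex, $f_w=f_{e_1}\circ\cdots\circ f_{e_m}$) have diameters tending to $0$ uniformly as $|w|\to\infty$, while their union over any fixed length is $C_i$. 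Hence, picking $x\in U_i\cap C_i$ and $\rho>0$ with $B(x,\rho)\subseteq U_i$, I would select a finite path $w^{(i)}$ with $x\in C_{i,w^{(i)}}$ and $|C_{i,w^{(i)}}|<\rho$, so that $C_{i,w^{(i)}}\subseteq U_i$.

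Next I would build, for each $n\in\mathbb{N}$, an auxiliary GGDC on the same vertex set whose edges from $i$ to $j$ are the concatenations $v\,w^{(\ell)}$, where $v$ ranges over admissible paths of length $n$ from $i$ to a vertex $\ell$ and $w^{(\ell)}$ is the path chosen above, with $j=t(w^{(\ell)})$; the associated map is $f_{v}\circ f_{w^{(\ell)}}$, which is again a contractive infinitesimal similitude with strictly positive H\"older $D(\cdot)$ by Lemmas \ref{conti} and \ref{Chain_lemma}, so $H4.1$--$H4.4$ persist. For two such edges leaving $i$ one has $f_{v w^{(\ell)}}(C_{t(w^{(\ell)})})\subseteq f_v(C_{\ell,w^{(\ell)}})\subseteq f_v(U_\ell)$, and the sets $f_v(U_\ell)$ over distinct length-$n$ paths $v$ from $i$ are pairwise disjoint by iterating the graph-directed SOSC; thus the auxiliary GGDC satisfies the graph-directed SSC, and its maps are injective on the relevant $C$'s because each $f_e|_{C_\cdot}$ is. Applying \cite[Theorem $4.17$]{Nussbaum1} to it gives $\dim_H(C_i^{*})=\sigma_n$, where $C_i^{*}$ are its invariant sets (so $C_i^{*}\subseteq C_i$) and $\sigma_n$ is the unique exponent with $r(\widehat{L}^{(n)}_{\sigma_n})=1$ for the matrix-valued transfer operator $\big(\widehat{L}^{(n)}_{\sigma}g\big)_i(y)=\sum_{v,\ell}\big((Df_{v w^{(\ell)}})(y)\big)^{\sigma}\,g_{t(w^{(\ell)})}(f_{v w^{(\ell)}}(y))$ of the auxiliary GGDC. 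Since $C_i^{*}\subseteq C_i$ and the upper bound $\dim_H(C_i)\le\sigma_0$ holds with no separation hypothesis (the cover-by-cylinders estimate of \cite{Nussbaum1}), this yields $\sigma_n\le\dim_H(C_i)\le\sigma_0$.

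The remaining, and main, step is to show $\sigma_n\to\sigma_0$, equivalently to rule out $\beta:=\dim_H(C_i)<\sigma_0$. Here I would argue by contradiction exactly as in the proof of Theorem \ref{mainthm}: factor $(Df_{v w^{(\ell)}})(y)=(Df_v)(f_{w^{(\ell)}}(y))\,(Df_{w^{(\ell)}})(y)$ by Lemma \ref{Chain_lemma}, combine the eigenvector identity $\widehat{L}^{(n)}_{\sigma_n}h=h$ for the auxiliary system with $L_{\sigma_0}h_0=h_0$ for the full GGDC, and telescope the $\sigma_0$-sum over length-$n$ paths $v$ against the components of $h_0$ --- the graph-directed analogue of Equation \ref{Eqn22} --- to bound $\sum_{v}\big((Df_v)(\cdot)\big)^{\sigma_0}$ below by $(\min h_{0})/(\max h_{0})$. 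Replacing one exponent $\sigma_0$ by $\beta$ then costs a factor $c_{\max}^{\,n(\beta-\sigma_0)}$, with $c_{\max}=\max_e c_e<1$, which blows up as $n\to\infty$ while the left-hand side stays bounded by the fixed eigenvector $h$; the resulting contradiction forces $\dim_H(C_i)\ge\sigma_0$, and since $i$ is arbitrary the theorem follows. The points I expect to be delicate are: verifying that the positive-eigenvector and spectral-radius machinery of \cite{Nussbaum4} applies to the iterated matrix operator $\widehat{L}^{(n)}_{\sigma}$ (in particular that the auxiliary graph remains strongly connected, so a strictly positive eigenvector $h$ exists), and making the telescoping distortion estimate uniform across the $p$ vertices and across the auxiliary paths $w^{(\ell)}$, whose lengths a priori differ.
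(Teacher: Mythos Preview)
The paper does not actually supply a proof of Theorem \ref{GGDC}: it is merely stated in Section~4.2 as a ``modification'' of \cite[Theorem 4.17]{Nussbaum1}, with the tacit understanding that the argument of Theorem \ref{mainthm} carries over to the graph-directed setting. Your proposal is precisely that carry-over---choose for each vertex $\ell$ a cylinder path $w^{(\ell)}$ with $C_{\ell,w^{(\ell)}}\subseteq U_\ell$, build an auxiliary GGDC from the compositions $f_v\circ f_{w^{(\ell)}}$ over length-$n$ paths $v$, observe that it satisfies the graph-directed SSC and apply \cite[Theorem 4.17]{Nussbaum1}, then rule out $\dim_H(C_i)<\sigma_0$ by the eigenvector/telescoping contradiction of Equations \ref{Eqn2}--\ref{Eqn22}---so it matches the paper's intended (but unwritten) argument. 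You have also correctly isolated the two genuine extra technicalities the graph-directed case introduces over the single-IFS case: verifying that the auxiliary directed graph remains strongly connected (so that $H4.3$ survives and a strictly positive eigenvector for $\widehat{L}^{(n)}_{\sigma}$ exists), and making the distortion bounds uniform over the vertex-dependent tails $w^{(\ell)}$ of possibly different lengths; both are routine once noticed but do need to be said.
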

\subsection{Infinite IFS}
Mauldin and Urba\'nski \cite{MU1} devised a notion of infinite IFS.
We define Infinite IFS by $\{X; f_i, i \in I=\mathbb{N}\}$, where $X$ is a compact metric space and each $f_i: X \to X $ is a contraction map for $i \in I.$ A set $A \subset X$ is called limit set if it satisfies $$A= \cup_{i \in I} f_i(A).$$ For existence and more details, the reader can consult.
Following \cite{Nussbaum1}, let us introduce some assumptions.
\begin{itemize}
\item[A.1] Let a compact and perfect metric space $(X,d)$ and countably infinite number of contractive infinitesimal similitudes $f_i:X \to X,$ $ i\in I$. Further, suppose that there exist positive numbers $M$ and $\lambda$ with $Df_i \in K(M,\lambda)$ and $Df_i(x) > 0, ~\forall x \in X.$ Also that $\sum_{i \in I}(Df_i)(x))^{\sigma} < \infty $ for some $\sigma >0$ and $x \in X.$
\item[A.2] For each $\epsilon > 0$, there exists a $c_{\epsilon}> 1$ such that for each $i \in I$ and for all $x,y \in X$ with $0< d(x,y) < \epsilon,$ $$ c_{\epsilon}^{-1} (Df_i)(x) \le \frac{d(f_i(x),f_i(y))}{d(x,y)} \le c_{\epsilon} (Df_i)(x)$$
and $\lim_{\epsilon \to 0^+} c_{\epsilon} =1.$
\item[A.3] For each $N \ge 1$ and $1\le i < j \le N$, we assume that $f_i(A_N) \cap f_j(A_N) =\emptyset$, and for each $1 \le i \le N$, $f_i|_{A_N}$ is injective. 
\end{itemize}
Set $\sigma_{\infty}=\inf\{\sigma >0: r(L_{\sigma})< 1\},$ where $L_{\sigma}: \mathcal{C}(X) \to \mathcal{C}(X)$ is defined as 
$$ (L_{\sigma}g)(x)= \sum_{i \in I}\big((Df_i)(x)\big)^{\sigma} g(f_i(x)).$$
Now we present an improved version of \cite[Theorem $5.11$]{Nussbaum1}.
\begin{theorem}
With the assumptions $A.1,~A.2$ and for each $N \ge 1$ and $1 \le i \le N$,  $f_i|_{A_N}$ is injective, we have $\dim_H(A)=\sigma_{\infty}.$
\end{theorem}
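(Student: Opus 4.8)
The plan is to approximate the infinite system by its finite truncations $\mathcal{F}_N=\{X;f_1,\dots,f_N\}$, with attractor $A_N$, and to squeeze $\dim_H(A)$ between the critical exponents of the $\mathcal{F}_N$ from below and a direct covering bound from above. As a preliminary I would record that $\sigma_N\uparrow\sigma_\infty$, where $\sigma_N$ denotes the unique number with $r({}_NL_{\sigma_N})=1$: since $({}_NL_\sigma g)(x)$ increases to $(L_\sigma g)(x)$ for every $g\ge 0$, positivity gives $r({}_NL_\sigma)\uparrow r(L_\sigma)$ for each $\sigma$ at which $L_\sigma$ is finite, and together with the strict monotonicity of $r(\,\cdot\,)$ in $\sigma$ this forces the zeros to converge, $\sigma_N\uparrow\sigma_\infty=\inf\{\sigma>0:r(L_\sigma)<1\}$.

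For the lower bound I would use that $A_N\subseteq A$, so $\dim_H(A)\ge\dim_H(A_N)$ for every $N$. Each $\mathcal{F}_N$ is a finite IFS of contractive infinitesimal similitudes with strictly positive H\"older continuous derivatives whose maps are one-to-one on $A_N$, so the finite theory (Theorem~\ref{thm-nussbaum}, or Theorem~\ref{mainthm} once the truncations are seen to carry the separation these theorems need) identifies $\dim_H(A_N)$ with $\sigma_N$; letting $N\to\infty$ and invoking the preliminary step yields $\dim_H(A)\ge\sup_N\sigma_N=\sigma_\infty$.

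For the upper bound, fix $\sigma>\sigma_\infty$ and a base point $x_0\in X$. For such $\sigma$ the operator $L_\sigma$ is bounded on $\mathcal{C}(X)$ (by the summability and uniform H\"older hypotheses in A.1) and $r(L_\sigma)<1$. For $0<b<1$ take the cut set $\Gamma_b$ defined as in Section~2; then $\{f_\omega(A):\omega\in\Gamma_b\}$ is a cover of $A$ whose members have diameter tending to $0$ as $b\to 0$, and by A.2 together with the H\"older control (arguing as in Lemmas~\ref{new1}, \ref{new2} and \ref{new3}) there is a constant $c$, uniform over all finite words $\omega$, with $|f_\omega(A)|\le c\,(Df_\omega)(x_0)$. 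Using the chain rule (Lemma~\ref{Chain_lemma}) we then get
\begin{equation*}
\sum_{\omega\in\Gamma_b}|f_\omega(A)|^{\sigma}\le c^{\sigma}\sum_{\omega\in\Gamma_b}\big((Df_\omega)(x_0)\big)^{\sigma}\le c^{\sigma}\sum_{n\ge 1}\sum_{|\omega|=n}\big((Df_\omega)(x_0)\big)^{\sigma}=c^{\sigma}\sum_{n\ge 1}\big(L_\sigma^{n}\mathbf{1}\big)(x_0)\le c^{\sigma}\sum_{n\ge 1}\|L_\sigma^{n}\|,
\end{equation*}
and the last series converges because Gelfand's formula gives $\|L_\sigma^{n}\|^{1/n}\to r(L_\sigma)<1$. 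Hence $H^{\sigma}(A)<\infty$, so $\dim_H(A)\le\sigma$, and letting $\sigma\downarrow\sigma_\infty$ finishes the upper bound and the theorem.

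I expect the main obstacle to be the uniform comparison $|f_\omega(A)|\asymp(Df_\omega)(x_0)$ used in the upper bound: the single constant $c$ must serve simultaneously for the infinitely many maps and at every level, which is exactly where the uniform form of A.2 (with $c_\epsilon\to 1$) and the uniform membership $Df_i\in K(M,\lambda)$ of A.1 are indispensable, and where Lemmas~\ref{new1}, \ref{new2} and \ref{new3} must be transported from the finite to the countable setting. A secondary, more bookkeeping-type point is to check that every finite truncation $\mathcal{F}_N$ really meets the hypotheses under which $\dim_H(A_N)=\sigma_N$, so that the lower bound indeed delivers $\sup_N\sigma_N=\sigma_\infty$.
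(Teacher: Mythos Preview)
The paper does not actually prove this theorem; it is stated without proof as an ``improved version'' of Theorem~5.11 of Nussbaum et al., the tacit suggestion being that one reruns their argument with Theorem~\ref{mainthm} replacing Theorem~\ref{thm-nussbaum}. Your two-sided strategy---upper bound by a direct covering/spectral-radius estimate, lower bound via the finite truncations $A_N\subseteq A$ and $\sigma_N\uparrow\sigma_\infty$---is exactly that intended route, and your upper-bound sketch is sound: the uniform comparison $|f_\omega(A)|\le c\,(Df_\omega)(x_0)$ over all finite words is precisely what the uniform H\"older class $K(M,\lambda)$ in A.1 together with the uniform distortion bound A.2 is designed to deliver, and the Gelfand estimate then finishes it.

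The real problem is the lower bound, and it is not bookkeeping. You propose to apply Theorem~\ref{thm-nussbaum} or Theorem~\ref{mainthm} to $\mathcal{F}_N$ ``once the truncations are seen to carry the separation these theorems need,'' but the stated hypotheses give \emph{only} that each $f_i|_{A_N}$ is injective---no SSC, no SOSC. Neither Theorem~\ref{thm-nussbaum} nor Theorem~\ref{mainthm} gives $\dim_H(A_N)=\sigma_N$ from injectivity alone, and injectivity certainly does not preclude $\dim_H(A_N)<\sigma_N$ (overlaps can collapse the attractor below what the pressure equation predicts; an extreme instance is when all $f_i$ share a fixed point, where injectivity on $A_N$ is vacuous). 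So either the theorem as printed is silently carrying an SOSC assumption on each $\mathcal{F}_N$---this would be the natural relaxation of A.3 and is exactly what the paper's main Theorem~\ref{mainthm} supports---or a genuinely different lower-bound argument is required. You should promote this from a ``secondary'' issue to the main one.
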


\subsection{Continuity of Hausdorff dimension}
In \cite{AP}, Priyadarshi showed the continuity of Hausdorff dimension of the attractor of generalized graph-directed constructions under certain conditions. He also mentioned in \cite[ Remark $3.4$]{AP}, that \cite[Theorem $3.3$]{AP} can be improved further. In particular, if \cite[Theorem $2.9$]{AP} holds with the graph-directed SOSC then the condition of the graph-directed SSP can be relaxed in \cite[ Theorem $3.3$]{AP}.
\begin{theorem}
Assume that the GGDC holds all the assumptions as in \cite{AP} except the graph-directed SSC, which is replaced by the graph-directed SOSC. If $r(L_{\sigma_m,m})=1$ for all $m \ge 1$ and $r(L_{\sigma_0})=1$, then $\lim_{m \to \infty}\sigma_m= \sigma_0.$ Moreover, If the GDDC corresponding to the limit satisfies the graph-directed SOSC then the Hausdorff dimension of $C_j$ is equal to the limit $\lim_{m \to \infty}\sigma_m$ for every $1\le j \le p.$
\end{theorem}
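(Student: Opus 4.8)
The plan is to adapt the continuity argument of \cite[Theorem $3.3$]{AP} to the weaker separation hypothesis, using the fact that the graph-directed SOSC is now sufficient (rather than the graph-directed SSC) to conclude that the Hausdorff dimension of each $C_j$ equals $r(L_{\sigma_0})$. Concretely, I would first record the two one-sided estimates that make the limit identifiable. For the upper bound I would use a covering argument: for the $m$-th GGDC the cylinder sets of level $n$ cover $C_j^{(m)}$, and their diameters are controlled by products of the $(Df_{(i,e)})^{\sigma}$-type quantities via Lemma \ref{Chain_lemma} and Lemmas \ref{new1}--\ref{new2}; passing $n\to\infty$ gives $\dim_H(C_j^{(m)})\le\sigma_m$ with $r(L_{\sigma_m,m})=1$. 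For the lower bound, because the $m$-th GGDC is assumed to satisfy the graph-directed SOSC, the (now available) improved version of \cite[Theorem $2.9$]{AP} gives $\dim_H(C_j^{(m)})\ge\sigma_m$, hence equality $\dim_H(C_j^{(m)})=\sigma_m$. The same reasoning applies to the limiting GGDC, giving $\dim_H(C_j)=\sigma_0$.

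Next I would prove $\sigma_m\to\sigma_0$ purely at the level of spectral radii. The key point is the continuous dependence of $r(L_{\sigma,m})$ on both $\sigma$ and $m$: the coefficient functions of $L_{\sigma,m}$ converge (uniformly on $X$, in the relevant H\"older class $K(M,\lambda)$) to those of $L_{\sigma,0}$ as $m\to\infty$ by the assumptions inherited from \cite{AP}, and $r$ depends continuously on the operator in operator norm (positivity and the Krein--Rutman framework of \cite[Theorem $5.4$]{Nussbaum4} give that the spectral radius is in fact the dominant eigenvalue, so standard perturbation theory applies). Combined with the strict monotonicity of $\sigma\mapsto r(L_{\sigma,0})$ from \cite[Lemma $1.1$]{Nussbaum1}, a routine $\varepsilon$--argument then forces $\sigma_m\to\sigma_0$: if $\sigma_m$ had a subsequential limit $\sigma_*\ne\sigma_0$, evaluating $r(L_{\sigma_m,m})=1$ along that subsequence and using joint continuity would give $r(L_{\sigma_*,0})=1$, contradicting uniqueness of $\sigma_0$.

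Finally, under the extra hypothesis that the limiting GGDC itself satisfies the graph-directed SOSC, I would invoke Theorem \ref{GGDC} (the corrected \cite[Theorem $4.17$]{Nussbaum1}) to identify $\dim_H(C_j)$ with the unique $\sigma_0$ solving $r(L_{\sigma_0})=1$ for every $1\le j\le p$, which together with $\sigma_m\to\sigma_0$ and $\dim_H(C_j^{(m)})=\sigma_m$ yields the stated conclusion that $\dim_H(C_j)=\lim_{m\to\infty}\sigma_m$.

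The main obstacle I expect is the joint continuity $r(L_{\sigma_m,m})\to r(L_{\sigma_0,0})$ in the complete-metric-space, infinitesimal-similitude setting: one must check that the perturbed operators stay in a common compact/equicontinuous family so that the Krein--Rutman eigenvalue varies continuously, and that the convergence of the $(Df_{(i,e)})$ data assumed in \cite{AP} is strong enough (uniform, with uniform H\"older bounds) to control $\sup_X$-norms of the coefficient functions $x\mapsto((Df_{(i,e)})(x))^{\sigma}$ uniformly in $\sigma$ on compact $\sigma$-ranges. Once that continuity is in hand, the rest is the by-now-standard squeeze between a covering upper bound and a mass-distribution / SOSC lower bound.
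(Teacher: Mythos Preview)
Your proposal is correct and takes essentially the same route as the paper's one-line proof (``This follows from Theorem~\ref{GGDC} and \cite{AP}''): the convergence $\sigma_m\to\sigma_0$ is a purely spectral statement already proved in \cite{AP} without any separation hypothesis, so the only new ingredient is replacing the SSC-based \cite[Theorem~4.17]{Nussbaum1} by Theorem~\ref{GGDC} to identify $\dim_H(C_j)=\sigma_0$ under SOSC. Your first paragraph establishing $\dim_H(C_j^{(m)})=\sigma_m$ is unnecessary (it is not part of the claim), and your anticipated obstacle about joint continuity of $r(L_{\sigma,m})$ is exactly what \cite{AP} already handles, so you may simply cite it rather than re-derive it.
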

\begin{proof}
This follows from Theorem \ref{GGDC} and \cite{AP}.
\end{proof}
\begin{example}
Let $A_n=[-\frac{1}{n},\frac{1}{n}]$ and $A=\{0\}$. Obvious that $A_n \to A$ with respect to the Hausdorff metric but the sequence $ \dim_H(A_n)$ does not converge to $\dim_H(A).$
\end{example}
\subsection{Computation of Hausdorff dimension}
In \cite{Nussbaum3,AP1,Nussbaum2}, Nussbaum and his collaborators computed Hausdorff dimension of some sets with the help of Theorem \ref{thm-nussbaum}. In particular, they estimated the lower bound for Hausdorff dimension of a set of complex continued fractions. We have mentioned in the introductory section that Theorem \ref{thm-nussbaum} covers only cantor type sets, that is, totally disconnected sets. However, Theorem \ref{mainthm} could work for a broader class of fractal sets. We strongly believe that our results will find some applications in computational part of dimension theory. 

\subsection*{Acknowledgements}
  The author is grateful to Dr. Amit Priyadarshi for many suggestions which have made this a better paper, and especially his time to elaborate his PhD thesis work.

 \bibliographystyle{amsplain}
 
\end{document}